\newcounter{index}
\newcommand{\nat}{\ensuremath{\omega}}
\newcommand{\Tr}{{\rm Tr}}
\newcommand{\ie}{{\rm i.e.,} }
\newcommand{\cf}{{\rm cf.~}}
\newcommand{\n}{\ensuremath{n \in \nat}}
\newcommand{\ep}{\ensuremath{\varepsilon}}
\newcommand{\del}{\ensuremath{\Delta^1_1}}
\newcommand{\sig}{\ensuremath{\Sigma^1_1}}
\newcommand{\pii}{\ensuremath{\Pi^1_1}}
\newcommand\tboldsymbol[1]{%
\protect\raisebox{0pt}[0pt][0pt]{%
$\underset{\widetilde{}}{\boldsymbol{#1}}$}\mbox{\hskip 1pt}}
\newcommand{\om}{\ensuremath{\omega}}
\newcommand{\Q}{\ensuremath{\mathbb Q}}
\newcommand{\R}{\ensuremath{\mathbb R}}
\newcommand{\ds}{\ensuremath{\displaystyle}}
\newcommand{\ca}[1]{\ensuremath{\mathcal{#1}}}
\newcommand{\set}[2]{\ensuremath{\{#1 \hspace{0.3mm} \mid \hspace{0.3mm} #2\}}}
\newcommand\surj{\twoheadrightarrow}
\newcommand{\lh}{{\rm lh}}
\renewcommand{\ie}{\text{i.e.,}~}
\newcommand{\ck}{\ensuremath{\om_1^{CK}}}
\newcommand{\ckr}[1]{\ensuremath{\om_1^{#1}}}
\newcommand{\dec}[1]{\ensuremath{u[#1]}}
\newcommand{\Seq}{\texttt{Seq}}
\newcommand{\kbleq}{\ensuremath{\leq_{KB}}}
\newcommand{\rfn}[1]{\ensuremath{\{#1\}}}
\newcommand{\WO}{{\rm WO}}
\newcommand{\LO}{{\rm LO}}
\newcommand{\IF}{{\rm IF}}
\newcommand{\HYP}{{\sf HYP}}
\newcommand{\tu}[1]{\textup{#1}}
\newcommand{\bolds}{\ensuremath{\tboldsymbol{\Sigma}}}
\newcommand{\boldp}{\ensuremath{\tboldsymbol{\Pi}}}
\newcommand{\cn}[2]{\ensuremath{#1 \ast #2}}
\newcommand{\scode}{\mathrm{G}}
\newcommand{\fcode}{\mathrm{F}}
\newcommand{\ocode}{\mathrm{U}}
\newcommand{\bcodefam}{\mathrm{BC}}
\newcommand{\bcf}{\pi c}
\newcommand{\pointcl}{\Gamma}
\newcommand{\gcode}{\ensuremath{{\rm C}_{\pointcl}}}
\newcommand{\baire}{\ensuremath{{\mathcal{N}}}}
\newcommand{\cantor}{\ensuremath{2^\om}}
\newcommand{\posb}{U}
\newcommand{\pr}{{\rm pr}}
\newcommand{\speffclass}{SP}
\newcommand{\spclass}{\tboldsymbol{\speffclass}}
\newcommand{\spcode}{{\rm SPC}}
\newcommand{\spcfun}[1]{\tau_{#1}}
\newcommand{\spcf}{\tau}
\newcommand{\normsp}[1]{|#1|_{{\rm SP}}}
\newcommand{\normb}[1]{|#1|_{\rm BC}}
\newcommand{\restrict}[2]{#1|#2}
\newcommand{\normR}[1]{\|#1\|}
\newcommand{\strans}{{\rm str}}
\newcommand{\pq}[1]{{\rm q}(#1)}
\newcommand{\dense}[2]{{\rm r}^{#1}(#2)}
\newcommand{\bijN}[1]{[ #1 ]^N}
\newcommand{\pair}[1]{[#1]}
\newcommand{\cgeffclass}{CG}
\newcommand{\cgclass}{\tboldsymbol{\cgeffclass}}
\newcommand{\cgcode}{{\rm CGC}}
\newcommand{\cgcfun}[1]{\pi_{#1}}
\newcommand{\cgcf}{\pi}
\newcommand{\normcg}[1]{|#1|_{{\rm CG}}}
\newcommand{\fprod}{{\rm FinProd}}
\newtheorem{theorem}{Theorem}[section]
\newtheorem{lemma}[theorem]{Lemma}
\newtheorem{definition}[theorem]{Definition}
\newtheorem{proposition}[theorem]{Proposition}
\newtheorem{corollary}[theorem]{Corollary}
\newtheorem{conjecture}[theorem]{Conjecture}
\begin{document}

\title{The Dyck and the Preiss separation uniformly}

\author{Vassilios Gregoriades}

\address{Vassilios Gregoriades\\
Via Carlo Alberto, 10\\ 
10123 Turin, Italy}

\email{vassilios.gregoriades@unito.it}

\date{\today}

\begin{abstract}
We are concerned with two separation theorems about analytic sets by Dyck and Preiss, the former involves the positively-defined subsets of the Cantor space and the latter the Borel-convex subsets of finite dimensional Banach spaces. We show by introducing the corresponding separation trees that both of these results admit a constructive proof. This enables us to give the uniform version of the separation theorems, and derive as corollaries the results, which are analogous to the fundamental fact ``\HYP \ is effectively bi-analytic" provided by the Souslin-Kleene Theorem.
\end{abstract}

\keywords{Positive sets, Convex sets, Convexly generated, Borel codes, Dyck separation, Preiss Separation, uniformity function.}
\subjclass[2010]{03E15, 03D60, 28A05}

\maketitle

\section{Introduction}

The question of realizing a property in a definable uniform way is prominent in effective descriptive set theory. One of the most important examples is the \emph{Souslin-Kleene Theorem}, which says that the separation property of the analytic sets can be realized by a recursive function in the codes. In this article we provide the uniform version of two more separation results about analytic sets, the Dyck and the Preiss separation.

We point out that the proofs of the latter two results that are available in the bibliography (see \cite[28.12;28.15]{kechris_classical_dst} and \cite{preiss_the_convex_generation_of_Borel_sets}) use reduction to contradiction, so we need first to give direct proofs to the Dyck and the Preiss separation. More specifically we will define in each case a well-founded tree, which realizes the separation between the given analytic sets. We point out that the separation in each of these theorems is fundamentally different from the usual separation of analytic sets. More specifically in the Dyck separation, while the inductive step remains the same, we have a smaller choice of sets at the bottom level. On the other hand, in the Preiss separation the bottom level is not very far from the usual one, however the inductive step is more restrictive. These facts are reflected in our separation trees.

We proceed to the presentation of our main results. Concerning the \textbf{Dyck Separation Theorem} we consider for all \n \ the subsets of $\cantor$,
\[
\posb_n = \set{x \in \cantor}{x(n) = 1}.
\]
A set $A \subseteq \cantor$ is \emph{positive} if it belongs to the least family of subsets of $\cantor$, which contains $\set{\posb_n}{\n}$, and is closed under countable unions as well as countable intersections. Similarly a set $A \subseteq \cantor$ is \emph{semi-positive} if it belongs to the least family of sets, which contains $\set{\posb_n}{\n}\cup\{\emptyset,\cantor\}$, and is closed under countable unions/intersections. 

Clearly every positive set is semi-positive, and it is also not hard to verify that every semi-positive set $A \subseteq \cantor$, which is not one of $\emptyset$, $\cantor$ is in fact positive.  Moreover positive sets are different from $\emptyset$, $\cantor$. To see this we remark that every positive set $A$ satisfies $\{{\bf 1}\} \subseteq A \subseteq \cantor \setminus \{{\bf 0}\}$, where ${\bf m}$ denotes the constant sequence $(m,m,m,\dots)$ for $m \in \om$. Hence the positive sets are exactly the semi-positive ones, which are non-empty and have non-empty complement.
 
For $x, y \in \cantor$ we write $x \subseteq y$ if  $\set{\n}{x(n)=1} \subseteq \set{\n}{y(n)=1}$. A set $A \subseteq \cantor$ is \emph{monotone} if for all $x,y \in \cantor$ with $x \in A$ and $x \subseteq y$ we have $y \in A$.

It follows easily that every positive set is monotone. The converse is also true for Borel sets which differ from $\emptyset$ and $\cantor$:

\begin{theorem}[Dyck, \cf \cite{kechris_classical_dst}, Th. 28.11 and 28.12]
\label{theorem Dyck}
Suppose that $A$ and $B$ are disjoint non-empty $\bolds^1_1$ subsets of $\cantor$. If $A$ is monotone then there is a positive Borel set $C \subseteq \cantor$ such that $A \subseteq C$ and $C \cap B = \emptyset$.\smallskip

In particular a Borel subset $A$ of $\cantor$ with $A \not \in \{\emptyset, \cantor\}$ is monotone exactly when it is positive.
\end{theorem}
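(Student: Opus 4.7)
The plan is to represent $A$ and $B$ as continuous images of $\baire$, introduce a well-founded separation tree whose base-level witnesses are the positive basic sets, and build the positive separator by recursion on it; monotonicity of $A$ will enter at exactly one point, the well-foundedness argument.

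Fix continuous $f,g : \baire \to \cantor$ with $f[\baire] = A$ and $g[\baire] = B$, and for $u \in \om^{<\om}$ write $F_u = f[N_u]$ and $G_u = g[N_u]$. The positive basic atoms will be the sets $V_s := \bigcap \{\posb_i : i < \lh(s),\ s(i)=1\}$ for $s \in 2^{<\om}$; each $V_s$ is monotone, and is a positive Borel set whenever $s$ is not identically zero. Declare $(u,v) \in T$ iff $\lh(u) = \lh(v)$ and no $V_s$ separates $F_u$ from $G_v$, i.e.\ there is no $s$ with $F_u \subseteq V_s$ and $V_s \cap G_v = \emptyset$. The inductive step below will mirror the classical Lusin construction, but at the base only the $V_s$ are available --- this is precisely the ``smaller choice of sets at the bottom level'' foreshadowed in the introduction.

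I expect the main obstacle to be well-foundedness of $T$. Assume for contradiction there is an infinite branch $(u_n, v_n)_n$ with limits $\alpha = \bigcup_n u_n$ and $\beta = \bigcup_n v_n$, and put $x = f(\alpha) \in A$, $y = g(\beta) \in B$. Monotonicity of $A$ is crucial here: if $x \subseteq y$ then $y \in A \cap B$, contradicting disjointness, so there must be some $i$ with $x(i) = 1$ and $y(i) = 0$. By continuity of $f$ and $g$, for all sufficiently large $n$ every point of $F_{u_n}$ has $i$-th coordinate $1$ and every point of $G_{v_n}$ has $i$-th coordinate $0$; hence $\posb_i$ (which is $V_s$ for $s$ of length $i+1$ with a single $1$ at position $i$) separates $F_{u_n}$ from $G_{v_n}$, contradicting $(u_n,v_n)\in T$.

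Given well-foundedness, build positive Borel sets $C_{u,v}$ separating the monotone closure $F_u^{\uparrow} := \{x \in \cantor : \exists y \in F_u,\ y \subseteq x\}$ from $G_v$ by recursion on the rank of $(u,v)$ in $T$. For $(u,v) \notin T$, pick a witnessing $s$ (it cannot be identically zero since $G_v \neq \emptyset$) and set $C_{u,v} := V_s$, using monotonicity of $V_s$ to lift $F_u \subseteq V_s$ to $F_u^{\uparrow} \subseteq V_s$. For $(u,v) \in T$ set
\[
C_{u,v} \;=\; \bigcup_{i \in \om} \bigcap_{j \in \om} C_{u \frown i,\, v \frown j},
\]
which remains positive because positive Borel sets are closed under countable unions and intersections. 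A routine verification --- any $x \in F_u^{\uparrow}$ dominates some point of $F_{u \frown i}$ for a suitable $i$ and hence sits in $\bigcap_j C_{u \frown i, v \frown j}$ by induction, while any $z \in G_v$ lies in some $G_{v \frown j}$ and therefore misses every $C_{u \frown i, v \frown j}$ --- shows $C_{u,v}$ separates $F_u^{\uparrow}$ from $G_v$. Setting $C := C_{\emptyset,\emptyset}$ and invoking $F_\emptyset^{\uparrow} = A$ yields the required separator. The ``in particular'' clause is then immediate: apply the separation to the disjoint non-empty analytic sets $A$ and $\cantor \setminus A$ (both Borel) to obtain a positive $C$ squeezed between $A$ and itself, whence $A = C$ is positive.
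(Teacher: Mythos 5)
Your proof is correct, and the high-level architecture matches the paper's (produce a well-founded separation tree, then build the separator by bar recursion), but the tree itself and the way monotonicity is woven in are genuinely different. The paper fixes trees of pairs $T,S$ with $A=\pr[T]$, $B=\pr[S]$ and defines a tree $J$ of \emph{quadruples} $(u,c,v,d)$ by the primitive conjunction $(u,c)\in T \;\&\; (v,d)\in S \;\&\; (\forall i<n)[u(i)=1\rightarrow v(i)=1]$; monotonicity is encoded syntactically by the last clause (``$x\subseteq y$ holds so far''), and an infinite branch directly supplies $x\in A$, $y\in B$, $x\subseteq y$. The separating sets at the boundary of $J$ are then forced to be one of $\emptyset$, $\cantor$, or a single $\posb_{\lh(u)}$, which yields a \emph{semi-positive} set that is then upgraded to positive by the observation preceding Theorem \ref{theorem Dyck}. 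Your tree instead lives on pairs $(u,v)$ over continuous parametrizations $F_u=f[N_u]$, $G_v=g[N_v]$, and is defined by the \emph{negative} condition ``no basic positive $V_s$ separates $F_u$ from $G_v$''; monotonicity enters twice, once in the well-foundedness contradiction (to rule out $x\subseteq y$) and once via the auxiliary monotone closure $F_u^{\uparrow}$, which cleanly propagates the base-case containment $F_u\subseteq V_s$ up the recursion. Your construction also produces a positive set outright, never passing through semi-positive. The trade-off is that the paper's tree $J$ is defined by a $(T,S)$-recursive condition, which is exactly what the effective uniformization in Theorem \ref{theorem uniformity version Dyck} needs, whereas your membership condition ``no $V_s$ separates'' is a $\boldp^0_2$ statement about images of neighborhoods and would require extra work to pass through the Kleene recursion theorem machinery. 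As a proof of the classical Theorem \ref{theorem Dyck} yours is complete and arguably more transparent; for the uniform version the paper's more primitive tree is the better vehicle. (One minor wording point: in the inductive verification, $z\in G_v$ lies in $G_{v\frown j^*}$ for one particular $j^*$, and misses $C_{u\frown i,v\frown j^*}$ for all $i$ --- it need not miss $C_{u\frown i,v\frown j}$ for other $j$; the conclusion $z\notin\bigcup_i\bigcap_j C_{u\frown i,v\frown j}$ still follows.)
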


In the last assertion of the preceding result it is necessary to have that the given set $A$ is Borel, since all positive sets are evidently Borel. It is natural to ask if there are monotone sets, which are not Borel and therefore not positive as well. As it is probably expected the answer is affirmative. We will in fact show a slightly stronger assertion in Proposition \ref{proposition a sig monotone set which is not Borel} below.

In our uniform version of the Dyck separation, we ask for a function which carries codes for the analytic sets $A$ and $B$ to a code for the semi-positive separating set $C$. Here the term ``code" has various interpretations. In the case of the analytic sets, we mean a parameter $\alpha \in \om^\om$ from a fixed universal set. A \emph{Borel code} is a parameter $\alpha \in \om^{< \om}$, which encodes the countable unions and complements, which are necessary to build the corresponding Borel set. The analogous notion of a \emph{semi-positive code} can be given, by using countable unions and intersections. These notions are given explicitly in the sequel (see \ref{subsection Borel codes} and \ref{subsection semi-positive codes} for the latter two).

Our uniform version of the Dyck Separation is

\begin{theorem}
\label{theorem uniformity version Dyck}
There are recursive functions $u,v : \baire \times \baire \to \baire$ such that whenever $\alpha, \beta$ are analytic codes for disjoint sets $A$, $B$ respectively with $A$ being monotone, then there is a semi-positive set $C$ with $A \subseteq C$, $C \cap B = \emptyset$ and such that
\begin{list}{\tu{(}\roman{index}\tu{)}}{\usecounter{index}}
\item $u(\alpha,\beta)$ is a Borel code for $C$; 
\item $v(\alpha,\beta)$ is a semi-positive code for $C$.
\end{list}
\end{theorem}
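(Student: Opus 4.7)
The plan is to follow the program announced in the introduction. First I would give a direct proof of the Dyck separation by constructing a well-founded tree $T(\alpha,\beta)$ from analytic codes $\alpha,\beta$ for disjoint $A,B\subseteq\cantor$ with $A$ monotone, whose nodes record the state of an inductive attempt to separate $A$ from $B$ using only the ``Dyck building blocks'' $\set{\posb_n}{\n}\cup\{\emptyset,\cantor\}$ together with countable unions and countable intersections. Given the standard tree representations $A=p[T_\alpha]$, $B=p[T_\beta]$ (uniformly recursive in $\alpha,\beta$), one places a finite sequence $s$ into $T(\alpha,\beta)$ exactly when no semi-positive approximation of the complexity bookkept by $s$ succeeds in separating the $s$-indexed sub-analytic fragments of $A$ and $B$. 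The content of the direct Dyck theorem is that $T(\alpha,\beta)$ is well-founded under the stated hypotheses, with monotonicity of $A$ invoked precisely to guarantee that at the bottom level the separation can be realized by some $\posb_n$, $\emptyset$, or $\cantor$.

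Once this is in hand, I would define $u$ and $v$ by a single simultaneous recursion along $T(\alpha,\beta)$: to each node $s$ I attach a semi-positive set $C_s$ separating the corresponding sub-fragments, where terminal nodes receive the basic set dictated by the leaf data (one of $\posb_n$, $\emptyset$, $\cantor$), and internal nodes alternate countable union and countable intersection indexed by their immediate successors in the tree. Carrying this recursion out in parallel in the Borel coding system of Subsection \ref{subsection Borel codes} and in the semi-positive coding system of Subsection \ref{subsection semi-positive codes} yields two codes for the same set $C:=C_\emptyset$. Because the base-level codes are already semi-positive and the recursion uses only countable unions and countable intersections, no complementation is ever introduced, so $v(\alpha,\beta)$ is a genuine semi-positive code; meanwhile $u(\alpha,\beta)$ is evidently a Borel code. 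Recursiveness of $u$ and $v$ in $(\alpha,\beta)$ follows because $T(\alpha,\beta)$ is arithmetically definable from $(\alpha,\beta)$ and the coding scheme on well-founded trees is an effective transformation.

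The main obstacle is the correct design of $T(\alpha,\beta)$. In the classical Lusin--Souslin--Kleene setting one may use arbitrary clopen sets at the base and arbitrary Boolean combinations in the induction; here one is constrained to $\posb_n$'s, $\emptyset$, $\cantor$ at the base and to countable unions/intersections (no complementation) in the induction. The tree must therefore be tailored so that its well-foundedness is equivalent to the existence of a separator built inside this restricted class, and the proof of well-foundedness must exploit the monotonicity of $A$ exactly at the leaves to discard complements. Verifying that monotonicity is the precise ingredient forcing well-foundedness of this restricted tree is the heart of the direct proof; once that is established, the extraction of the uniformly recursive codes $u$ and $v$ from $T(\alpha,\beta)$ is a routine effective recursion in the manner of the Souslin--Kleene theorem.
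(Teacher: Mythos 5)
Your outline has the right overall shape --- extract a tree from tree representations of $A$ and $B$, argue well-foundedness using monotonicity, build the separating set by bar recursion with basic sets at the leaves, and effectivize the construction in the manner of the Souslin--Kleene theorem --- and you correctly flag the tree design as the main obstacle. But precisely there the proposal has a genuine gap, and the sketch you do give is internally inconsistent. You place a node $s$ in $T(\alpha,\beta)$ ``exactly when no semi-positive approximation of the complexity bookkept by $s$ succeeds in separating,'' yet later claim $T(\alpha,\beta)$ is arithmetically definable in $(\alpha,\beta)$. Those two claims cannot coexist: a membership condition phrased in terms of the (non)existence of a separating semi-positive set is circular (it quantifies over the very objects the recursion is trying to construct) and is far above arithmetical complexity. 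The whole point of the paper's design is to avoid this.

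What the paper actually does is define a tree $J$ on quadruples by a straightforward recursive condition: $(u,c,v,d)\in J$ iff $(u,c)\in T$, $(v,d)\in S$, and $u(i)=1 \Rightarrow v(i)=1$ for all $i<\lh(u)$, where $T$, $S$ are the pair trees projecting to $A$, $B$. The third conjunct encodes the monotonicity constraint combinatorially, and it is this encoding --- not a metastatement about failed separations --- that makes $J$ uniformly recursive in $(\alpha,\beta)$. You also misplace where monotonicity enters: it is used to prove well-foundedness of $J$ (an infinite branch would give $x\in A$, $y\in B$ with $x\subseteq y$, so monotonicity forces $y\in A\cap B$, contradiction), not at the leaves. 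At a leaf, when some extension falls out of $J$, the separating set is $\emptyset$ (if one falls out of $T$), $\cantor$ (if one falls out of $S$), or $\posb_{\lh(u)}$ (in the remaining case, which the structure of $J$ forces to be $t=1$, $s=0$), and this works without any further appeal to monotonicity. Finally, effectivizing the bar recursion into genuinely recursive $u,v$ is not merely that ``the coding scheme on well-founded trees is an effective transformation''; it requires Kleene's Recursion Theorem exactly as in the Souslin--Kleene proof, and the cases defining the $D^{\sigma}_{(t,n,s,m)}$ must be checked to be $(T,S)$-recursive for that to go through.
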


The proof will be given in Section \ref{section the uniform Dyck separation} of this article.\smallskip

One notable consequence of the Suslin-Kleene Theorem is that $\HYP = \del$, \ie the sets which are obtained by starting with the semi-recursive sets and applying transfinitely the operations of recursive union and complement are the same as the effective bi-analytic (in other words $\del$) sets. Theorem \ref{theorem uniformity version Dyck} has a similar consequence. The effective version of a semi-positive set can be understood in the following two ways: (a) one considers the semi-positive sets, which are \del\;;\; (b) one considers the semi-positive sets, which are constructed by recursive countable unions and intersections using codes just as in the definition of $\HYP$.

We show that the preceding two ways deliver the same class of sets.

\begin{corollary}
\label{corollary effective Dyck}
For all $A \subseteq \cantor$ the following are equivalent:
\begin{list}{\tu{(}\roman{index}\tu{)}}{\usecounter{index}}
\item $A$ is $\del$ and semi-positive;
\item $A$ admits a recursive semi-positive code;
\item $A$ is $\del$ and monotone.
\end{list}
\end{corollary}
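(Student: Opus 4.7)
The plan is to prove the corollary by the standard cycle of implications (ii)$\Rightarrow$(i)$\Rightarrow$(iii)$\Rightarrow$(ii), where the last direction is the substantial one and invokes Theorem~\ref{theorem uniformity version Dyck}.

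For (ii)$\Rightarrow$(i), I would appeal to the general fact that a set admitting a recursive Borel code is $\del$ (this is part of the Souslin--Kleene machinery, and since a semi-positive code is in particular a Borel code under the canonical translation, the recursiveness transfers). Being coded in a semi-positive way immediately yields that the set is semi-positive. For (i)$\Rightarrow$(iii), I would first observe that every semi-positive set is monotone: the family of monotone subsets of $\cantor$ contains each $\posb_n$ as well as $\emptyset$ and $\cantor$, and is trivially closed under countable unions and intersections, so it contains every semi-positive set.

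The main direction is (iii)$\Rightarrow$(ii). Assume $A$ is $\del$ and monotone. Handle the two trivial cases $A = \emptyset$ and $A = \cantor$ separately, supplying fixed recursive semi-positive codes for them directly from the definition. Otherwise $A$ is a non-empty proper $\del$ subset of $\cantor$, so $B := \cantor \setminus A$ is a non-empty $\del$ (hence $\bolds^1_1$) set disjoint from $A$. Since $A, B$ are both $\del$, they admit \emph{recursive} analytic codes $\alpha, \beta \in \baire$ (each $\del$ set has a recursive $\sig$ code because $\del = \HYP$; select those via a fixed recursive procedure from a $\del$ presentation of $A$). Apply Theorem~\ref{theorem uniformity version Dyck} to obtain $v(\alpha,\beta)$, which is a semi-positive code for some $C$ with $A \subseteq C$ and $C \cap B = \emptyset$. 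The second condition forces $C \subseteq \cantor \setminus B = A$, whence $C = A$. Since $v$ is recursive and $\alpha,\beta$ are recursive reals, $v(\alpha,\beta)$ is a recursive semi-positive code for $A$, as required.

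The main technical point that requires attention is the passage from ``$A$ is $\del$'' to ``$A$ and $\cantor\setminus A$ have recursive analytic codes,'' i.e.\ ensuring that the analytic codes plugged into the uniformity function $v$ are themselves recursive; this is a routine application of the effective theory but is where care is needed so that $v(\alpha,\beta)$ ends up recursive rather than merely recursive in a parameter. Everything else is formal, and the trivial cases $A\in\{\emptyset,\cantor\}$ must not be overlooked since Theorem~\ref{theorem uniformity version Dyck} is stated for disjoint non-empty analytic sets.
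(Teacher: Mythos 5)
Your proof is correct and takes essentially the paper's route: the substantive direction is the application of the recursive uniformity function $v$ from Theorem~\ref{theorem uniformity version Dyck} to recursive analytic codes for $A$ and $\cantor\setminus A$ (together with the observation that the only set separating $A$ from its complement is $A$ itself), and the remaining implications are handled by the same soft facts, modulo a slightly different organization (the paper reads (i)$\Leftrightarrow$(iii) directly off Theorem~\ref{theorem Dyck} and cites Corollary~\ref{corollary union spc stops at the church-kleene ordinal} with Lemma~\ref{lemma basic properties of effective semipositive B} for (ii)$\Rightarrow$(i)). One small remark: the non-emptiness hypothesis appears only in Theorem~\ref{theorem Dyck}, not in Theorem~\ref{theorem uniformity version Dyck}, so your separate handling of $A\in\{\emptyset,\cantor\}$ is harmless but not actually necessary.
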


Next we move to the \textbf{Preiss Separation Theorem}. A subset $A$ of $\R^N$, where $N \geq 1$, is \emph{convexly generated} if it belongs to the least family, which contains all compact convex sets and is closed under countable increasing unions  as well as countable intersections. As in the case of Borel and semi-positive sets, one can encode the family of convexly generated sets using the \emph{convexly generated codes} - the precise definition is given in \ref{subsection convexly generated codes and the proof to the uniform Preiss Separation Theorem}.

It is evident that all convexly generated subsets of $\R^N$ are Borel. Klee \cite{klee_convex_sets_in_linear_spaces} asked whether the converse is correct, and has answered this affirmatively in the case $N=2$, \cf \cite{klee_convex_sets_in_linear_spacesIII}. Larman \cite{larman_the_convex_sets_in_R3_are_convexly_generated} proved the analogous result in the case $N=3$, and finally Preiss \cite{preiss_the_convex_generation_of_Borel_sets} gave a proof  for arbitrary $N \geq 1$:

\begin{theorem}[Preiss \cite{preiss_the_convex_generation_of_Borel_sets}, see also 28.15 \cite{kechris_classical_dst}]
\label{theorem preiss separation}
Suppose that $A, B$ are disjoint $\bolds^1_1$ subsets of $\R^N$. If $A$ is convex, then there is a convexly generated set $C$ with $A \subseteq C$ and $C \cap B = \emptyset$.\smallskip

In particular a Borel set $A \subseteq \R^N$ is convex if and only if $A$ is convexly generated.
\end{theorem}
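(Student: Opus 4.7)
The plan is to give a direct constructive proof, in the spirit of the Dyck argument sketched in the introduction, by building from analytic codes of $A$ and $B$ a well-founded \emph{Preiss separation tree}; the convexly generated separator $C$ is then assembled by transfinite recursion along the rank of the tree. First I would fix trees $T_A, T_B$ on $\om \times \om$ with $A = p[T_A]$ and $B = p[T_B]$, together with a countable basis $(K_m)_{m \in \om}$ of compact convex subsets of $\R^N$ (for instance closed balls with rational centre and radius), rich enough that every compact convex set is a countable intersection of basis elements and that $\R^N$ itself is a countable increasing union of them. Each node of the separation tree will carry a state consisting of a compact convex ``localization'' region $K$, finite initial segments of $T_A$ and $T_B$ specifying which pieces of $A$ and $B$ are currently under consideration, and some bookkeeping to interleave the two types of refinement steps described below.

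The children of a non-terminal node are of two kinds, matching the two inductive clauses in the definition of convexly generated sets. At an \emph{intersection node}, the region $K$ is refined by a covering by countably many smaller basic compact convex sets, and the children's separators will eventually be combined by countable intersection. At a \emph{union node}, the current piece of $A$ is refined simultaneously along $T_A$ and along the exhaustion of $\R^N$ by the $K_m$'s; the children's separators will be combined by a countable \emph{increasing} union, which is permissible precisely because the convexity of $A$ lets the decomposition $A = \bigcup_{m} (A \cap K_m)$ serve as the scaffolding for an increasing sequence of compact-convex-based separators. A node is declared terminal once no branch of $T_B$ survives inside the current $K$, in which case $K$ itself furnishes a compact convex separator, forming the base of the recursion.

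Next I would establish well-foundedness of the tree by contradiction: an infinite branch, via the usual projection-tree arguments combined with compactness of the nested localizations, would yield a common element of $A$ and $B$, contradicting disjointness. A transfinite recursion on the rank then defines $C$ (compact convex at leaves, countable intersections at intersection nodes, countable increasing unions at union nodes), so $C$ is convexly generated, contains $A$, and avoids $B$. Uniformity, which would feed into an analogue of Theorem \ref{theorem uniformity version Dyck} for the Preiss separation, is automatic since each step of the construction is recursive in the analytic codes of $A$ and $B$.

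The main obstacle, and the feature that genuinely distinguishes Preiss from the classical Lusin separation, is enforcing \emph{monotonicity} at the union nodes. The analytic decomposition of $A$ one reads off from $T_A$ is an arbitrary countable union, whereas the convexly generated class is closed only under increasing countable unions. Exploiting the convexity of $A$ to replace the naive analytic pieces by an increasing sequence of bounded convex approximations, and weaving this into the recursive construction so that the assembled set truly contains $A$ (and not merely $\overline{A}$) while still avoiding $B$, is the delicate technical point. This is precisely the ``more restrictive inductive step'' flagged in the introduction, and is where the argument departs substantially from the usual analytic separation.
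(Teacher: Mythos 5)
You correctly identify the crux of the problem---that the analytic decomposition of $A$ read off a Souslin tree is an arbitrary countable union, while the convexly generated class only closes under \emph{increasing} countable unions---but your proposal stops at naming this obstacle rather than overcoming it, and the place where you wave your hands is exactly where the paper's argument lives. Saying one will ``replace the naive analytic pieces by an increasing sequence of bounded convex approximations'' does not explain how to do so coherently at every level of the recursion simultaneously, and without a concrete device this is not a proof plan but a restatement of the difficulty.

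The paper's solution is to first replace the ordinary Souslin representation of $A$ by what it calls a \emph{good Souslin scheme} $(Q_u)_{u\in\om^{<\om}}$: a regular scheme of closed sets with $A=\ca{A}_uQ_u$ and the strong monotonicity property $Q_{\cn{u}{\cn{(i)}{v}}}\subseteq Q_{\cn{u}{\cn{(j)}{v}}}$ for all $i\le j$ and all $u,v$. Proving that every analytic set admits such a scheme (Proposition \ref{proposition every analytic set admits a good Souslin scheme}) is a genuine combinatorial construction---it uses a continuous surjection $\pi:\baire\surj\R^N$, a relation $c\preceq u$ on finite sequences, and sets $P^u_c$ defined from a tree of pairs for $\pi^{-1}[A]$---and it is this property, not the convexity of $A$ alone, that makes the tail sets $A_{\cn{u}{(i)}}=\ca{A}_vQ_{\cn{u}{\cn{(i)}{v}}}$ increase in $i$. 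Compactness is then obtained cheaply by intersecting with the cubes $[-m,m]^N$ (your ``localization region'' $K$ plays a similar role, so that instinct is sound), and the separation tree $J$ consists of tuples $(m,b,d,u)$ with membership governed by a quantitative condition $p_\infty\bigl(h(b\upharpoonright s),H(Q_{u\upharpoonright s}\cap[-m,m]^N)\bigr)<2^{-s+3}$ involving the convex hull---not by the dichotomy of ``intersection nodes'' and ``union nodes'' you describe, and the base-case separators are open convex neighborhoods of convex hulls rather than the compact convex regions $K$ themselves. Finally, even with the good-scheme monotonicity in hand, increasingness of the assembled union must be enforced explicitly by a tail-intersection trick, $C_\sigma=\bigcup_j\bigcap_{i\ge j}\bigcap_{(k,l)}D^\sigma_{(k,l,i)}$, which you do not anticipate. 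In short: your outline is compatible with the paper's strategy at the level of slogans, but it omits the two ideas (the good Souslin scheme and the tail-intersection assembly) without which the recursion cannot produce a convexly generated set, and your ``terminal node yields $K$'' clause is also wrong as stated, since a bounded compact convex $K$ cannot by itself contain the (generally unbounded) piece of $A$ it is supposed to cover.
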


It is worth noting the related result of Holick{\'y} \cite{holicky_the_convex_generation_of_convex_Borel_sets_in_locally_convex_spaces} that Theorem \ref{theorem preiss separation} does not extend to the infinite dimensional separable Banach spaces,  where in the definition of convexly generated sets we replace the term ``compact convex" with ``closed convex". 

Unlike the case of Dyck separation, we are not able to obtain a recursive uniformity function, but rather a $\del$-recursive. The reason for this boils down to the fact that we are dealing with subsets of the Euclidean space. We will need for example to consider functions of the form $\left(\alpha \mapsto \text{code for~}H(\fcode^{\R^N}(\alpha) \cap [-m,m]^N)\right)$, and $\left((x,\alpha) \mapsto \ \text{distance between $x$ and~}H(\fcode^{\R^N}(\alpha) \cap [-m,m]^N) \right)$, where $\fcode^{\R^N}(\alpha)$ is the closed set encoded by $\alpha \in \om^{< \om}$ and $H(\cdot)$ is the convex hull, $m \in \om$. These functions are far from being recursive. On the other hand the $\sigma$-compactness of $\R^N$ allows us to keep the effective complexity to the $\del$-level.

\begin{theorem}
\label{theorem uniform preiss separation}
For every natural number $N \geq 1$ there are $\del$-recursive functions $u,v: \baire \times \baire \to \baire$ such that for all $\alpha, \beta \in \baire$, if the analytic sets $\scode_1^{\R^N}(\alpha)$, $\scode_1^{\R^N}(\beta)$ are disjoint and $\scode_1^{\R^N}(\alpha)$ is convex then there is a convexly generated set $C$, which separates $\scode_1^{\R^N}(\alpha)$ from $\scode_1^{\R^N}(\beta)$, and
\begin{list}{\tu{(}\roman{index}\tu{)}}{\usecounter{index}}
\item $u(\alpha,\beta)$ is a Borel code for $C$;
\item $v(\alpha,\beta)$ is a convexly generated code for $C$.
\end{list}
\end{theorem}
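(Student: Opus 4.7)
The plan is to follow the strategy used for Theorem \ref{theorem uniformity version Dyck}, but with two crucial modifications: the combinatorial base and inductive step are replaced by geometric ones, and the effective complexity must be raised from recursive to $\del$-recursive.

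First I would translate the direct proof of Theorem \ref{theorem preiss separation} (supplied in an earlier section) into a \emph{Preiss separation tree} $\mathcal{T}(\alpha,\beta)$ on a suitable index set, with the property that $\mathcal{T}(\alpha,\beta)$ is well-founded exactly when the hypotheses of the theorem hold, namely when $\scode_1^{\R^N}(\alpha)$ is a convex analytic set disjoint from $\scode_1^{\R^N}(\beta)$. Each node will carry a pair of analytic sub-codes together with auxiliary data recording a box $[-m,m]^N$ and a finite approximation of $A$ by convex hulls of finitely many points of its trace; a successor step will record either a candidate increasing countable union or a candidate countable intersection, thereby mirroring the two inductive operations in the definition of a convexly generated set, and a terminal node will point at a compact convex piece of the form $H(F\cap [-m,m]^N)$ for some finite set $F$.

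Second, I would define $v(\alpha,\beta)$ by recursion on this well-founded tree: at a terminal node one writes a convexly generated code for the attached compact convex piece, while at an internal node one writes the convexly generated code of an increasing union or of an intersection of the codes already assigned to the children. The map $u(\alpha,\beta)$ is obtained either by the same recursion with Borel-code constructors at each step, or equivalently by composing $v$ with the (to be established) $\del$-recursive passage from convexly generated codes to Borel codes. The recursion itself is standard: once one knows that the tree is well-founded, closure of $\del$-recursive functions under recursion along well-founded trees delivers $u$ and $v$.

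The main obstacle, flagged already in the introduction, is that the geometric ingredients in the base step are not recursive in the parameters. Producing a closed code for $H(\fcode^{\R^N}(\alpha) \cap [-m,m]^N)$, testing membership in it, and computing the distance from a point to such a convex hull are all operations that cannot be carried out by a recursive function of $\alpha$ and $m$. The remedy is that each of them \emph{is} $\del$-recursive: the $\sigma$-compactness of $\R^N$ and the fact that the convex hull of a compact set is compact localise everything to compact subsets, where closed and open codings are $\del$-effectively interchangeable. The bulk of the technical work will therefore consist of verifying, one ingredient at a time, that (i) the passage $\alpha \mapsto (\text{code for } H(\fcode^{\R^N}(\alpha)\cap [-m,m]^N))$, (ii) the distance function to such a hull, and (iii) the navigation through $\mathcal{T}(\alpha,\beta)$ are all $\del$-recursive, uniformly in $m$ and in the node. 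Once this is in place, the construction of $u$ and $v$ is a routine effective transfinite recursion along $\mathcal{T}(\alpha,\beta)$.
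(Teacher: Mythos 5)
Your plan has the right skeleton — a well-founded separation tree, bar recursion, and a catalogue of $\del$-recursive base operations — and you correctly identify $\sigma$-compactness and Caratheodory as the reason the whole construction can be kept at the $\del$-level even though it cannot be kept recursive. You also correctly anticipate Lemmas \ref{lemma transition from closed code to code of convex hull} and \ref{lemma transition from closed code to distance from convex hull} as the necessary technical workhorses. But there is a genuine gap in the description of the tree itself, and it is exactly the obstacle that the paper singles out in the lead-in to Definition \ref{definition good Souslin code}.

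In a bar-recursion argument you only ever hold finitely many stages of a Souslin scheme in hand at any node. The separation piece attached to a node must therefore be built from some $H(K\cap[-m,m]^N)$ where $K$ is one of these finite-stage sets, and for Caratheodory to guarantee that this hull is closed you need $K$ to be closed. The standard regular closed Souslin schemes do not satisfy the monotonicity $P_{\cn{u}{(i)}}\subseteq P_{\cn{u}{(i+1)}}$ that the increasing-union step of ``convexly generated'' demands; the schemes of \cite[25.13]{kechris_classical_dst} that do satisfy it consist of analytic sets, and taking closures can enlarge the Souslin kernel. The paper resolves this by constructing a \emph{good Souslin scheme} (Definition \ref{definition good Souslin code}, Proposition \ref{proposition every analytic set admits a good Souslin scheme}): a regular scheme of closed sets with the stronger monotonicity property $Q_{\cn{u}{\cn{(i)}{v}}}\subseteq Q_{\cn{u}{\cn{(j)}{v}}}$ for $i\le j$, obtained by recoding the analytic set via a double Souslin scheme, and then shows (Proposition \ref{proposition from analytic code to good Souslin code}) that a good Souslin code can be extracted recursively from an analytic code. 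Your proposal never confronts this, and the vague description of the node data as ``convex hulls of finitely many points of its trace'' with terminal nodes of the form $H(F\cap[-m,m]^N)$ for a finite $F$ is not adequate: the hulls one actually needs are $H(Q_u\cap[-m,m]^N)$ for closed sets $Q_u$ from such a scheme (and, more precisely, the terminal separating piece is the open convex neighborhood $\{x : d(x,H(Q_u\cap[-m,m]^N))<2^{-n}\}$, not a compact piece). Without an explicit replacement for the good-Souslin-scheme machinery, the well-foundedness proof of $\mathcal{T}(\alpha,\beta)$ and the increasing-union inductive step both fail, so the argument does not go through as described.
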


The analogous-to-Corollary \ref{corollary effective Dyck} result is

\begin{corollary}
\label{corollary effective Preiss}
For all $A \subseteq \R^N$ the following are equivalent:
\begin{list}{\tu{(}\roman{index}\tu{)}}{\usecounter{index}}
\item $A$ is $\del$ and convexly generated;
\item $A$ admits a $\del$ convexly generated code;
\item $A$ is $\del$ and convex.
\end{list}
\end{corollary}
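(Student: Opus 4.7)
The plan is to prove the three implications cyclically, exactly in parallel with the argument for Corollary~\ref{corollary effective Dyck}. Once Theorem~\ref{theorem uniform preiss separation} is in hand, this corollary should be a short formal consequence; the real work is the uniform separation, not the corollary.

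For (ii)~$\Rightarrow$~(i), I would note that a convexly generated code by definition encodes a convexly generated set, so $A$ is convexly generated. To get that $A$ is $\del$, I would exhibit a recursive procedure that transforms any convexly generated code into a Borel code for the same set (this is straightforward since countable increasing unions, countable intersections, and compact convex sets are uniformly Borel-coded). A $\del$ convexly generated code then yields a $\del$ Borel code, and the standard effective fact that $A$ has a $\del$ Borel code iff $A$ is $\del$ finishes this implication.

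For (i)~$\Rightarrow$~(iii), I would argue by induction on the convexly generated construction that every convexly generated set is convex: compact convex sets are convex, countable increasing unions of convex sets are convex, and arbitrary intersections of convex sets are convex. Combined with $A \in \del$ this gives (iii).

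The main implication is (iii)~$\Rightarrow$~(ii), and here I would invoke Theorem~\ref{theorem uniform preiss separation}. Given $A$ that is $\del$ and convex, both $A$ and $\R^N \setminus A$ are $\del$ and in particular $\sig$. By a standard effective argument (any $\del$ set admits a $\del$ analytic code, uniformly), I would pick $\del$ analytic codes $\alpha,\beta \in \baire$ with $\scode_1^{\R^N}(\alpha) = A$ and $\scode_1^{\R^N}(\beta) = \R^N \setminus A$. Since these sets are disjoint and $A$ is convex, Theorem~\ref{theorem uniform preiss separation} provides a convexly generated set $C$ with $A \subseteq C$ and $C \cap (\R^N \setminus A) = \emptyset$, so necessarily $C = A$, and $v(\alpha,\beta)$ is a convexly generated code for $A$. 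Because $v$ is $\del$-recursive and $\alpha,\beta$ are $\del$, the parameter $v(\alpha,\beta)$ is $\del$, which is exactly (ii).

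The only step requiring care is the uniform translation from convexly generated codes to Borel codes used in (ii)~$\Rightarrow$~(i); everything else is a direct application of earlier results. No genuine obstacle is expected beyond Theorem~\ref{theorem uniform preiss separation} itself, whose proof is where the substantive work of the section lies.
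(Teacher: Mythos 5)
Your proof is correct and follows essentially the same route as the paper, with a couple of cosmetic differences worth noting. The paper establishes $(i) \iff (iii)$ in one stroke from the (non-uniform) Preiss Separation Theorem \ref{theorem preiss separation}, and then proves $(iii) \Rightarrow (ii)$ and $(ii) \Rightarrow (iii)$; you instead close a cycle $(ii) \Rightarrow (i) \Rightarrow (iii) \Rightarrow (ii)$, so you never explicitly invoke the classical (non-uniform) theorem --- everything is deduced from the uniform version, which is arguably a cleaner bookkeeping. Two small points. First, in your $(iii) \Rightarrow (ii)$ you take $\del$ analytic codes for $A$ and $\R^N \setminus A$; since both sets are in fact lightface $\sig$, they admit \emph{recursive} analytic codes, and the paper uses these, which avoids having to argue that a $\del$-recursive $v$ applied to $\del$ points yields a $\del$ point (true, but it requires the substitution property $\sig(\alpha) = \sig$ for $\alpha \in \del$). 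Second, your $(ii) \Rightarrow (i)$ passes through Borel codes and then appeals to ``$\del$ Borel code iff $\del$''; the paper's treatment of the analogous step (its $(ii) \Rightarrow (iii)$) instead produces analytic codes for the set and its complement directly from a convexly generated code, as in \cite[7B.5]{yiannis_dst}. The content is the same --- in either case the substantive point is the recursive transition from $\cgcode$-codes to a pair of $\sig$-codes --- but you should be aware that the fact ``$\del$ Borel code implies $\del$'' is itself just \cite[7B.5]{yiannis_dst} plus the same substitution property, so you are not saving work by routing through $\bcodefam$. Neither of these affects the correctness of the argument.
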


Notice that we do not claim a \emph{recursive} convexly generated code in the preceding result; this is due to the fact that our uniformity functions are $\del$-recursive. Nevertheless we think that the former is indeed true.

\begin{conjecture}
\label{conjecture recursive convexly generated code}
Every subset of $\R^N$ is $\del$ and convexly generated exactly when it admits a recursive convexly generated code.
\end{conjecture}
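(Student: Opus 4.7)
The easy direction ($\Leftarrow$) is immediate from the definitions. A recursive convexly generated code $\alpha$ determines a Borel set $\cgcode(\alpha)$ by transfinite induction along a well-founded tree of recursive rank; this set admits a recursive Borel code and is therefore $\del$, and it is convexly generated by construction, hence convex by Theorem \ref{theorem preiss separation}.

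For the hard direction ($\Rightarrow$), let $A \subseteq \R^N$ be $\del$ and convex. Recursive $\del$ codes for $A$ and for $\R^N \setminus A$ furnish, in particular, recursive analytic codes $\alpha, \beta$ for $A$ and its complement, and Theorem \ref{theorem uniform preiss separation} produces a $\del$-recursive convexly generated code $\gamma = v(\alpha,\beta)$ for $A$. The task is then to \emph{compress} $\gamma$ to a recursive convexly generated code for the same set. This is in direct analogy with the Suslin-Kleene Theorem, which compresses $\del$ Borel codes to recursive Borel codes, so the natural plan is a Gandy-style recursion-theoretic compression along the tree associated with $\gamma$, tied together by the Kleene recursion theorem.

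Carrying this out would proceed through three ingredients, in the following order: (a) a base case showing that every $\del$ compact convex subset $K$ of $\R^N$ admits a recursive compact-convex code, typically realized through effective approximation of $K$ from inside by rational polytopes or from outside by rational half-spaces; (b) an inductive step showing that recursive convexly generated codes are preserved under recursive countable increasing unions and recursive countable intersections, which should be formal once the coding scheme of \ref{subsection convexly generated codes and the proof to the uniform Preiss Separation Theorem} is in place; and (c) a fixed-point construction that assembles (a) and (b) into the desired compression, mirroring the Suslin-Kleene argument.

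The main obstacle is (a). For general closed sets, Suslin-Kleene already furnishes recursive closed codes from $\del$ ones, but here the code must additionally witness convexity, which amounts to effectively computing convex hulls and supporting hyperplanes of $\del$ compact sets in $\R^N$. It is exactly this geometric content that forced the $\del$ complexity of the uniformity function $v$ in Theorem \ref{theorem uniform preiss separation}. Any proof of the conjecture is therefore likely to rest on a careful effective analysis of extreme points and support functionals of $\del$ compact convex subsets of Euclidean space, exploiting the finite dimensionality of $\R^N$ in an essential way.
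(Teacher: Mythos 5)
You are addressing a \emph{conjecture}, not a theorem: the paper offers no proof of this statement, and the author explicitly flags it as open (``Nevertheless we think that the former is indeed true''). So there is no ``paper's own proof'' for your proposal to be measured against.

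Your proposal is also not a proof but a plan, and you say so yourself: steps (a)--(c) are ``how a proof would proceed,'' and you single out (a) as an unresolved obstacle. That assessment is sound and matches the state of the art in the paper. The easy direction you describe is indeed covered by the paper's Corollary \ref{corollary effective Preiss} (the implication $(ii)\Rightarrow(iii)$ there is proved for $\del$ codes via the argument of \cite[7B.5]{yiannis_dst}, and recursive codes are a fortiori $\del$), though note two small points: convexly generated $\Rightarrow$ convex is immediate by induction and does not require Theorem \ref{theorem preiss separation}, and $\cgcode$ in your write-up should be the decoding function $\cgcf^N$, not the code set $\cgcode^N$. For the hard direction you correctly identify the natural strategy (a Suslin--Kleene/Gandy compression via the recursion theorem) and correctly identify where it runs aground: the base case, namely producing \emph{recursive} closed codes that additionally witness convexity (compactness, convex hulls, support data) starting from a $\del$ compact convex set. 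This is precisely the same bottleneck that forced the author's uniformity function in Theorem \ref{theorem uniform preiss separation} to be only $\del$-recursive rather than recursive (the chain Lemma \ref{lemma code of continuous image of compact} $\to$ Lemma \ref{lemma transition from closed code to code of convex hull} $\to$ Lemma \ref{lemma transition from closed code to distance from convex hull} $\to$ Proposition \ref{proposition basis of coding} all invoke Louveau separation or $\Delta$-selection and so only give $\Sigma^0_k(\ep)$-recursiveness for some $\del$ parameter $\ep$). So the gap you identify is exactly the gap the author leaves open; nothing in your proposal closes it, and nothing in the paper does either. If you want to make progress, the thing to attack is whether one can replace the $\del$ parameter $\ep$ coming from Louveau separation in those lemmata by a recursive one when the input closed set is already $\del$ compact and convex; this is not addressed in the paper and remains the crux.
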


\subsection{Notation.}

By $\ca{X}$, $\ca{Y}$ we always mean Polish spaces. As usual in effective descriptive set theory we will write $P(x)$ instead of $x \in P$. The natural numbers are identified with the first infinite ordinal $\om$. By $\om^{< \om}$ we mean the set of all finite sequences of naturals, including the empty one, which we denote by $\emptyset$. Given $u = (u_0,\dots,u_{n-1}) \in \om^{< \om}$ the \emph{length $\lh(u)$ of $u$} is the preceding $n$ - if $n= 0$ then $u = \emptyset$. The \emph{concatenation} $\cn{u}{v}$ of two finite sequences $u,v$ is $(u_0,\dots,u_{\lh(u)-1},v_0,\dots,v_{\lh(v)}-s)$. By $u \sqsubseteq v$ we mean that \emph{$u$ is an initial segment of $v$} or that \emph{$v$ extends $u$}.

We fix the following injective enumeration of $\om^{< \om}$:
\[
\langle \cdot \rangle : \om^{< \om} \to \om: (u_0,\dots,u_{n-1}) \mapsto \langle u_0,\dots, u_{n-1} \rangle = p_0^{u_0+1} \cdot p_1^{u_1 + 1} \cdot p_{n-1}^{u_{n-1}+1},
\]
where $(p_n)_{\n}$ is the increasing enumeration of the natural numbers. By convention $\langle \emptyset \rangle = 1$. We denote by $\Seq$ the image of $\om^{< \om}$ under $\langle \cdot \rangle$, and by $\dec{\cdot}: \Seq \to \om^{<\om}$ the inverse of $\langle \cdot \rangle$. Given $s = \langle u_0,\dots, u_{n-1}\rangle \in \Seq$ and $ i < n$ we let $(s)_i$ be the unique natural $u_i$ as before. If $i \geq n$ or if $s \not \in \Seq$ by $(s)_i$ we mean the number $0$.

Throughout this article we fix the following enumeration $(\pq{s})_{s \in \om}$ of non-negative rational numbers
\[
\pq{s} = \frac{(s)_0}{(s)_1+1}\;.
\]
The \emph{Baire space} is $\baire : = \omega^\omega$ and the \emph{Cantor space} is $\cantor$ both spaces considered with the product topology. The members of $\baire$ and $\cantor$ are usually denoted by lowercase Greek letters $\alpha$, $\beta$, $\gamma$, \dots.

Given $\alpha \in \baire$ and a natural $n$ we put
\begin{align*}
\alpha^\ast =& \ (t \mapsto \alpha(t+1))\\
(\alpha)_n =& \ (i \mapsto \alpha(\langle n,i\rangle))\\
\alpha \upharpoonright n =& \ (\alpha(0),\dots,\alpha(n-1))\\
\overline{\alpha}(n) =& \ \langle \alpha(0),\dots, \alpha(n-1)\rangle.
\end{align*}

The first uncountable ordinal is denoted by $\om_1$. The \emph{Church-Kleene} ordinal (\ie the first non-recursive ordinal) is denoted by $\ck$, and its $\alpha$-relativized version by $\ckr{\alpha}$, where $\alpha \in \baire$.

The members of the Baire space encode countable linear orderings in a natural way. Given $\alpha \in \baire$ we consider the set Field$(\alpha) = \set{n \in \om}{\alpha(\langle n,n \rangle) = 1}$ and the binary relation $\leq_{\alpha} = \set{(n,m)}{\alpha(\langle n,m \rangle) = 1}$. We then define the set $\LO$ \emph{codes of countable linear orderings}
\[
\LO = \set{\alpha \in \LO}{\leq_\alpha \ \text{is a linear ordering on Field}(\alpha)}.
\]
The set of \emph{codes of countable well-orderings} is 
\[
\WO = \set{\alpha \in \LO}{\leq_\alpha \ \text{is a well-ordering on Field}(\alpha)}.
\]
Given $\alpha \in \WO$ we denote by $|\alpha|$ the unique ordinal with order type the one of $\leq_\alpha$.

A \emph{tree} on the naturals is a non-empty set $T \subseteq \om^{\om}$, which is closed downwards under $\sqsubseteq$, \ie \ $(u \in T \ \& \ v \sqsubseteq u) \ \Longrightarrow \ v \in T$. The body $[T]$ of a tree $T$ is the set of its infinite branches \set{\alpha \in \baire}{(\forall n)[\alpha \upharpoonright n \in T]}. We also consider \emph{trees of pairs}, \ie \ non-empty sets of tuples $(u,v) \in \om^{< \om} \times \om^{< \om}$ with $\lh(u) = \lh(v)$, which are closed downwards under $\sqsubseteq$, \ie \ if $(u,v) \in T$ and $(u',v')$ is such that $\lh(u') = \lh(v')$ and $u'\sqsubseteq u$, $v' \sqsubseteq v$, then $(u',v') \in T$. We will frequently identify a pair $(u,v)$ of finite sequences of the same length $n \in \om$ with the sequence of pairs $((u_0,v_0),\dots,(u_{n-1},v_{n-1}))$. The \emph{body of a tree of pairs $T$} is $[T] = \set{(\alpha,\gamma) \in \baire \times \baire}{(\forall n)[(\alpha \upharpoonright n, \gamma \upharpoonright n) \in T]}$. We will also consider trees of triples with the analogous notions.

We will often deal with partial functions on a space \ca{X} to a space \ca{Y}. These will be denoted as $f: \ca{X} \rightharpoonup \ca{Y}$. By $f(x) \downarrow$ we mean that $f$ is defined on $x$.

\subsection{Effective notions}

We will assume that the reader is familiar with the topic of effective descriptive theory. The usual reference to the latter is \cite{yiannis_dst}. Let us recall that a sequence $(x_n)_{\n}$ in the complete separable metric space $(\ca{X},d_{\ca{X}})$ is a \emph{recursive presentation} of $(\ca{X},d_{\ca{X}})$ if it forms a dense subset of $(\ca{X},d_{\ca{X}})$ and the relations $P, Q \subseteq \om^3$, defined by
\begin{align*}
P(i,j,k) \iff& d(x_i,x_j) < \pq{k}\\
Q(i,j,k) \iff& d(x_i,x_j) \leq \pq{k}
\end{align*}
are recursive. The metric space $(\ca{X},d_{\ca{X}})$ is \emph{recursively presented} if it admits a recursive presentation. 

When saying that \ca{X} is \emph{recursive Polish} we mean that the latter space is in fact given together with a suitable metric and a recursive presentation. Actually we go further by fixing for every recursive Polish space \ca{X} a metric $d_\ca{X}$ and a sequence $(\dense{\ca{X}}{i})_{i \in \om}$, which is a recursive presentation of $(\ca{X},d_{\ca{X}})$.

For every recursive Polish space \ca{X} we fix the numbering $(N(\ca{X},s))_{s \in \om}$ of a neighborhood basis for \ca{X},
\begin{align*}
N(\ca{X},s) 
=& \ B_{d_\ca{X}}(\dense{\ca{X}}{(s)_0},\pq{(s)_1})\\
=& \ \text{the $d_\ca{X}$-open ball of \ca{X} with center $\dense{\ca{X}}{(s)_0}$ and radius $\pq{(s)_1}$.}
\end{align*}

In this article we focus on the spaces $\cantor$, $\baire$, $[a,b]$, $\R$, and their finite products.  Recursive presentations are preserved by finite products the natural way. It is convenient in the sequel to adopt the the maximum metric for finite products. Given recursive Polish spaces $\ca{X}_0, \dots, \ca{X}_n$ and $\ca{X} = \prod_{i=0}^n \ca{X}_i$ we put
\begin{align*}
d_{\ca{X}}(\vec{x},\vec{y}) =& \ \max\set{d_{\ca{X}_i}(x_i,y_i)}{i=0,1,\dots, n}\\
\dense{\ca{X}}{s} =& \ (\dense{\ca{X}_0}{(s)_0}, \dots, \dense{\ca{X}_n}{(s)_n}),
\end{align*}
so that a neighborhood of the product is the product of neighborhoods; in fact it is easy to check that
\[
N(\ca{X}_0 \times \dots \times \ca{X}_n,s) = N(\ca{X}_0, \langle ((s)_0)_0, (s)_1 \rangle) \times \dots \times N(\ca{X}_n, \langle ((s)_n)_0, (s)_1 \rangle).
\]
Given a finite sequence $(t_0,\dots, t_n)$ of natural numbers the product $N(\ca{X}_0,t_0) \times \dots \times N(\ca{X}_n,t_n)$ is easily a semirecursive subset of $\ca{X} = \prod_{i=0}^n \ca{X}_i$, in fact this holds uniformly, \ie there is a recursive function $\fprod(\ca{X}_0,\dots,\ca{X}_n) \equiv \fprod: \om \to \baire$ such that
\begin{align}\label{equation product of neighborhoods}
N(\ca{X}_0,t_0) \times \dots \times N(\ca{X}_n,t_n) = \cup_{i \in \om} N(\ca{X}, \fprod(\langle t_0, \dots, t_n\rangle)(i))
\end{align}

In the sequel we consider the usual Kleene pointclasses $\Sigma^i_n$, $\Pi^i_n$, and $\Delta^i_n$, where $\n$ and $i=0,1$, together with their relativized versions.\smallskip

\textbf{Terminology.} Given a partial function $f: \ca{X} \rightharpoonup \ca{Y}$ between recursive Polish spaces, and sets $A \subseteq \text{Domain}(f)$, $R^f \subseteq \ca{X} \times \om$ we say that \emph{$f$ is computed by $R^f$ on $A$} if for all $x \in A$ and all $s \in \om$ we have that
\[
f(x) \in N(\ca{Y},s) \iff R^f(x,s).
\]
Partial $\pointcl$-recursive functions are exactly the ones, which are computed on their domain by a set in $\pointcl$. 

\subsubsection*{Universal systems} Given a class of sets $\pointcl$ in Polish spaces we denote by $\pointcl \upharpoonright \ca{X}$ the family of all subsets of \ca{X}, which belong in $\pointcl$. Recall that a set $G \subseteq \baire \times \ca{X}$ is \emph{universal} for $\pointcl \upharpoonright \ca{X}$ if $G$ is in $\pointcl$ and for all $P \subseteq \ca{X}$ we have that
\[
P \in \pointcl \iff \ \text{there is some $\alpha \in \baire$ such that} \ G(\alpha,x).
\]
As it is well-known the pointclasses $\bolds^i_n$, $i=0,1$ admit a universal system in a natural way. We fix the symbols $\ocode$ and $\fcode$ for the cases $\pointcl = \bolds^0_1$ and $\pointcl = \boldp^0_1$ respectively:

For every Polish space \ca{X} we define the set $\ocode^\ca{X}, \fcode^\ca{X} \subseteq \baire \times \ca{X}$ by
\begin{align*}
\ocode^\ca{X}(\alpha,x) \iff& (\exists n)[x \in N(\ca{X},\alpha(n))]\;,\\
\fcode^\ca{X}(\alpha,x) \iff& \neg \ocode^\ca{X}(\alpha,x).
\end{align*}
It is clear that the sets $\ocode^\ca{X}, \fcode^\ca{X}$ are universal for $\tboldsymbol{\Sigma}^0_1 \upharpoonright \ca{X}$ and $\tboldsymbol{\Pi}^0_1 \upharpoonright \ca{X}$ respectively. By a simple induction one can find universal sets for all pointclasses $\bolds^0_n$.

We also consider the following universal set for $\bolds^1_1$:
\begin{align*}
\scode^\ca{X}_1(\alpha,x) \iff& (\exists \gamma) \fcode^{\ca{X} \times \baire}(\alpha,x,\gamma).
\end{align*}
We fix the preceding universal sets throughout the rest of this article.

By saying that $\alpha$ is an \emph{open code} for $P\subseteq \ca{X}$ we mean that $P$ is the $\alpha$-section of the set $\ocode^\ca{X}$. The similar meaning applies \emph{closed} and \emph{analytic} codes.

One key aspect of our chosen universal system systems is that they are \emph{good}, \ie for every space \ca{X} of the form $\om^n \times \baire^m$, where $n,m \geq 0$, and every Polish space $\ca{Y}$ there is a recursive function $S: \baire \times \ca{X} \to \baire$ such that for all $\ep$, $\alpha$, $y$ it holds
\[
\gcode^{\baire \times \ca{X}}(\ep,\alpha,y) \iff \gcode^{\ca{X}}(S(\ep,\alpha),y),
\]
where $\pointcl$ is the pointclass under discussion and $\gcode^\ca{X}$ is the corresponding universal set. This is verified by straight-forward computations, which we omit.

Another important fact about our universal sets, is that the class of their recursive sections parametrize the corresponding effective pointclass. For example a subset of a recursive Polish space $\ca{X}$ is $\sig$ exactly when it is the $\alpha$-section of $\scode^{\ca{X}}_1$ for some recursive $\alpha$.\smallskip

\textbf{More on notation.} We will usually write $\scode^{\ca{X}}_1(\alpha)$ for the $\alpha$-section of $\scode^{\ca{X}}_1$, and do similarly for the sections of all other sets.

\subsection{Borel codes}

\label{subsection Borel codes}

With the help of the preceding universal sets one can encode the partial $\alpha$-recursive functions from a recursive Polish space to another, and similarly for the $\del(\alpha)$-recursive ones, see \cite[7A]{yiannis_dst}. We denote by $\rfn{\alpha}^{\ca{X},\ca{Y}}_{\pointcl}$ the largest partial function $f: \ca{X} \rightharpoonup \ca{Y}$, which is computed on its domain by $\gcode^{\ca{X} \times \baire}(\alpha)$, where $\gcode$ is as above.

If $\pointcl = \Sigma^i_n$, where $i=0,1$ and $\n$, then a partial function $f: \ca{X} \rightharpoonup \ca{Y}$ is $\pointcl$-recursive on its domain if and only if there is some recursive $\alpha \in \baire$ such that for all $x$ with $f(x) \downarrow$ we have $f(x) = \rfn{\alpha}^{\ca{X},\ca{Y}}_{\pointcl}$\;; this is essentially a result of Kleene, see \cite[7A.1]{yiannis_dst}.

If we omit $\pointcl$ we always mean that $\pointcl = \Sigma^0_1$, so the function $\rfn{\alpha}^{\ca{X},\ca{Y}}$ is $\alpha$-recursive. If we simply write $\rfn{\alpha}$ we mean that $\ca{X} = \om$ and $\ca{Y} = \baire$, \ie $\rfn{\alpha} = \rfn{\alpha}^{\om,\baire}_{\Sigma^0_1}$.

The set $\bcodefam$ of \emph{Borel codes} is defined by transfinite recursion
\begin{align*}
\alpha \in \bcodefam_0 \iff& \ \alpha(0) = 0\;,\\
\alpha \in \bcodefam_\xi \iff& \ \alpha(0) = 1 \ \& \ (\forall n)[\rfn{\alpha^\ast}(n) \downarrow \ \& \ (\exists \eta < \xi)\rfn{\alpha^\ast}(n) \in \bcodefam_\eta]\;,\\
\end{align*}
for $\xi > 0$ and
\[
\bcodefam = \ \cup_{\xi < \om_1} \bcodefam_\xi\;.
\]
It is clear that $\bcodefam_\eta \subseteq \bcodefam_\xi$ for all $1 \leq \eta < \xi$. Moreover as it is well-known the set $\bcodefam$ is $\pii$ and not Borel.

For $\alpha \in \bcodefam$ we put $\normb{\alpha} = \ \text{the least $\xi < \om_1$ such that} \ \alpha \in \bcodefam_\xi$.

With the help of $(\bcodefam_\xi)_{\xi < \om_1}$ we can encode the families $(\bolds^0_\xi \upharpoonright \ca{X})_{\xi < \om_1}$ for every Polish space \ca{X}. More specifically, given a Polish space $\ca{X}$ with a countable basis $\bolds^0_0 \upharpoonright \ca{X} = \set{V_s}{s \in \om}$,\footnote{If \ca{X} is recursive Polish we will always assume that $V_s = N(\ca{X},s)$, $s \in \om$.} define by recursion
\begin{align*}
\bcf^{\ca{X}}_0:& \ \bcodefam_0 \surj \set{P}{\ca{X} \setminus P \in \bolds^0_0 \upharpoonright \ca{X}}: \alpha \mapsto \ca{X} \setminus V_{\alpha(1)}\;,\\
\bcf^{\ca{X}}_\xi:& \ \bcodefam_\xi \surj \bolds^0_\xi \upharpoonright \ca{X}: \alpha \mapsto \cup_{n \in \om} \left(\ca{X} \setminus \bcf^{\ca{X}}_{\normb{\rfn{\alpha^\ast}(n)}}(\rfn{\alpha^\ast}(n))\right).
\end{align*}
By an easy induction we can see that each $\bcf^{\ca{X}}_\xi \equiv \bcf_\xi$ is indeed surjective and that $\bcf_\xi \upharpoonright \bcodefam_\eta = \bcf_\eta$ for all $1 \leq \eta < \xi$.

Finally we define $\bcf^{\ca{X}} \to \text{Borel subsets of} \ \ca{X}: \alpha \mapsto \bcf^{\ca{X}}(\alpha)$. In other words every $\alpha \in \bcodefam$ is a \emph{Borel code} for the set $\bcf^{\ca{X}}(\alpha)$.

A subset $A$ of a recursive Polish space \ca{X} is \emph{lightface $\Sigma^0_\xi$} if we have $A = \bcf^\ca{X}(\alpha)$ for some \emph{recursive} $\alpha \in \bcodefam$ with $\normb{\alpha} = \xi$. It is a well-known consequence of the Souslin-Kleene Theorem that
\[
\del = \cup_{\xi < \ck} \; \Sigma^0_\xi = \cup_{\xi < \om_1} \; \Sigma^0_\xi.
\]
The union $\cup_{\xi < \om_1} \; \Sigma^0_\xi$ is also denoted as \HYP \ (the class of \emph{hyperarithmetical sets}), and the preceding equality extends the well-known result of Kleene $\del \upharpoonright \om = \HYP \upharpoonright \om$. 

\section{The uniform Dyck separation}

\label{section the uniform Dyck separation}

We first remark that not all monotone sets are Borel. Recall that a set $A \subseteq \ca{X}$ is \emph{$\bolds^1_1$-complete} if for every zero-dimensional Polish $\ca{Z}$ and every $\bolds^1_1$ set $P \subseteq \ca{Z}$ there is a continuous function $f: \ca{Z} \to \ca{X}$, which \emph{reduces} $P$ to $A$, \ie $P = f^{-1}[A]$. It is clear that no $\bolds^1_1$-complete set is Borel.

\begin{proposition}
\label{proposition a sig monotone set which is not Borel}
There is a $\sig$ monotone subset of $\cantor$ which is $\bolds^1_1$-complete.
\end{proposition}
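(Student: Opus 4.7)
The strategy is to define directly a set that captures ``the positions set to $1$ contain an infinite branch''; such a set is $\sig$ by construction, monotone by construction, and complete because ill-foundedness of trees is the prototypical $\bolds^1_1$-complete property. Set
\[
A = \set{x \in \cantor}{(\exists \alpha \in \baire)(\forall n)[x(\overline{\alpha}(n)) = 1]}.
\]
The defining formula is visibly $\sig$, so $A \in \sig$. Monotonicity is immediate: if $x \in A$ with witness $\alpha$ and $x \subseteq y$ in the sense of the paper, then $x(\overline{\alpha}(n)) = 1$ forces $y(\overline{\alpha}(n)) = 1$ for every $n$, so $\alpha$ still witnesses $y \in A$.

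For $\bolds^1_1$-completeness I would invoke the standard tree representation of analytic sets. Let $\ca{Z}$ be any zero-dimensional Polish space and $P \subseteq \ca{Z}$ a $\bolds^1_1$ set. After fixing a homeomorphic embedding of $\ca{Z}$ into $\baire$ and composing with the usual parametrization $w \mapsto T_w$ arising from a tree on $\om \times \om$, one obtains a continuous assignment $z \mapsto T_z$ of trees on $\om$ such that $z \in P$ if and only if $T_z$ is ill-founded.

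Define $f: \ca{Z} \to \cantor$ by $f(z)(s) = 1$ if $s \in \Seq$ and $\dec{s} \in T_z$, and $f(z)(s) = 0$ otherwise. Each bit of $f(z)$ depends on only finitely many coordinates of $z$ by continuity of $z \mapsto T_z$, so $f$ is continuous. Moreover $f(z) \in A$ iff some $\alpha \in \baire$ satisfies $\alpha \upharpoonright n \in T_z$ for every $n$, iff $T_z$ is ill-founded, iff $z \in P$. Hence $f$ continuously reduces $P$ to $A$, which is the required completeness. The only mildly delicate point is securing the continuous tree parametrization over an arbitrary zero-dimensional Polish space rather than directly over $\baire$, and this is precisely what the embedding handles; the remainder of the verification is a direct computation.
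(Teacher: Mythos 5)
Your proof is correct, and it takes a genuinely different (and arguably more economical) route than the paper. The paper builds its example from codes of linear orders: it sets $A = \set{\beta \in \cantor}{(\exists \alpha \in \LO)[\alpha \notin \WO \ \& \ \alpha \subseteq \beta]}$ and reduces $\IF$ to it via the Kleene--Brouwer encoding $T \mapsto \alpha(T)$. The nontrivial step there is in the converse direction of the reduction, where one must verify that a subrelation of a well-order remains a well-order, so that $\alpha(T) \in A$ really does force $T \in \IF$. Your set $A = \set{x \in \cantor}{(\exists \alpha \in \baire)(\forall n)[x(\overline{\alpha}(n)) = 1]}$ works directly at the level of trees and branches: the quantified $\alpha$ is literally a branch, so the biconditional $f(z) \in A \iff T_z \in \IF$ falls out immediately with no auxiliary lemma about induced orders. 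Both constructions exploit the same principle for monotonicity — quantifying existentially over a witness whose relevant conditions are inherited under enlarging the set of $1$'s — but yours bypasses the $\LO$/$\WO$/Kleene--Brouwer machinery entirely. The only place your write-up is a bit terse is the continuity of $z \mapsto T_z$: one should say that $\ca{Z}$ embeds as a closed subspace of $\baire$, pull $P$ back to a $\bolds^1_1$ subset of $\baire$, and take $T_z$ to be the section tree $\set{u}{(e(z)\upharpoonright \lh(u),u) \in S}$ of a tree $S$ on $\om \times \om$ (padding $S$ so that $\emptyset \in T_z$ always); then membership of $u$ in $T_z$ depends on only finitely many values of $e(z)$, which gives continuity. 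These are standard points and the argument goes through.
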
 

\begin{proof}
We consider the sets $\LO$ and $\WO$ of codes of countable linear and well-orderings respectively from the Introduction. It is not hard to see that the set $\LO$ is arithmetical and that the set $\WO$ is $\pii$.

We define
\[
A := \set{\beta \in \cantor}{(\exists \alpha \in \LO)[\alpha \not \in \WO \ \& \ \alpha \subseteq \beta]}.
\]
Evidently $A$ is a monotone $\sig$ set. To show that it is $\bolds^1_1$-complete we consider the space $\Tr$ of all non-empty trees on the naturals (this is isomorphic to a closed subset of $\cantor$), and the set $\IF$ of all \emph{ill-founded} trees, \ie the trees with an infinite branch. As it is well-known it suffices to define a function $f : \Tr \to \cantor$ such that $\IF = f^{-1}[A]$.

Moreover we consider the \emph{Kleene-Brouwer}, also  known as \emph{Luzin-Sierpinski} linear ordering $\kbleq$ on the set of all finite sequences of natural numbers. As it is well-known a tree $T$ is ill-founded exactly when $T$ with the restriction $\kbleq \upharpoonright T$ of $\kbleq$ on $T$ is not a well-ordering. We can encode $\kbleq \upharpoonright T$ by some $\alpha(T) \in \LO$ the natural way, \ie $\alpha(T)$ is the unique member of $\cantor$ which satisfies
\begin{align*}
u = (u_0,\dots,u_{n-1}) \in T \iff& \ \langle u_0,\dots,u_{n-1} \rangle \in \text{Field}(\alpha(T)),\\
&\hspace*{-55mm}\text{for} \ u = (u_0,\dots,u_{n-1}), v = (v_0,\dots,v_{m-1}) \in T \ \text{we have}\\  
u \kbleq v \longleftrightarrow& \ \langle u_0,\dots, u_{n-1} \rangle \leq_{\alpha(T)} \langle v_0,\dots, v_{m-1}\rangle.
\end{align*}
The function $T \in \Tr \mapsto \alpha(T) \in \LO$ is easily recursive. We show that it reduces $\IF$ to $A$. One of the inclusions is obvious: if $T \in \IF$ then $\alpha(T)$ is in $\LO \setminus \WO$, and so in particular $\alpha(T) \in A$. 

To prove the converse we remark first that for all $\alpha, \beta \in \LO$ with $\alpha \subseteq \beta$ and all $n,m$ with $n <_{\alpha} m$ we have $n <_{\beta} m$. This is fairly easy to see. Let $\alpha$, $\beta$, $n,m$ be as just described; in particular we have $\alpha (\langle n,n \rangle) = \alpha (\langle m,m \rangle) = \alpha (\langle n,m \rangle) = 1$, and using that $\alpha \subseteq \beta$ it follows $\beta (\langle n, n\rangle ) = \beta (\langle m, m\rangle ) = \beta (\langle n, m\rangle ) =1$, \ie $n \leq_{\beta} m$. If we had $m \leq_{\beta} n$ it would follow $n = m$ because $\beta \in \LO$. Since $\alpha \in \LO$ and $n = m \in \text{Field}(\alpha)$ we would also have $m \leq_{\alpha} n$, a contradiction.

Now if $\alpha(T) \in A$ and $\alpha \in \LO \setminus \WO$ is such that $\alpha \subseteq \alpha(T)$, using the preceding remark we have that every strictly decreasing sequence under $\leq_{\alpha}$ is also strictly decreasing under $\leq_{\alpha(T)}$. Hence $\alpha(T) \not \in \WO$, \ie $T \in \IF$.
\end{proof}\bigskip

Next we \emph{prove the first part of Theorem \ref{theorem uniformity version Dyck}}.\smallskip

We essentially need to give a constructive proof to Theorem \ref{theorem Dyck} and use Borel codes. To do this we adjust the constructive proof  of the Luzin Separation Theorem (see \cite[2E.1]{yiannis_dst}) to our setting.

Let $A, B$ be non-empty disjoint analytic subsets of $\cantor$, and let $T$ and $S$ be trees of pairs such that
\begin{align*}
x \in A \iff& \ (\exists \gamma)(\forall t)[(x\upharpoonright t, \gamma \upharpoonright t) \in T]\\
y \in B \iff& \ (\exists \delta)(\forall t)[(y\upharpoonright t, \delta \upharpoonright t) \in S].
\end{align*}
We define the tree $J$ of quadruples by
\[
(u,c,v,d) \in J \iff (u,c) \in T \ \& \ (v,d) \in S \ \& \ (\forall i < n)[u(i) = 1 \ \longrightarrow v(i)=1],
\]
where $u, v \in 2^{< \om}$, $c,d \in \om^{<\om}$, and $u,v,c,d$ have the same length $n$. An infinite branch $(x,\gamma,y,\delta)$ in $[J]$ would imply that $(x,\gamma) \in [T]$, $(y,\delta) \in [S]$ and $x \subseteq y$. Then we would have that $x \in A$, $y \in B$, and since $A$ is monotone $y$ would be also in $A$. This would imply that $A \cap B \neq \emptyset$, which contradicts our hypothesis. Hence the tree $J$ is well-founded.\footnote{Notice that if we strengthen the last condition in the definition of $J$ to ``$u=v$", then what we get is essentially the well-founded tree of triples of the proof of the Luzin Separation Theorem as it is given in \cite{yiannis_dst}.  The key idea in our proof is to view the latter tree as a special case of a tree defined by the formula $(u,c) \in T \ \& \ (v,d) \in S \ \& \ P(u,c,v,d)$, where $P$ is a property that reflects the additional hypothesis on $A$.}

We will define by bar recursion on the branches of $J$ a family $(C_\sigma)_{\sigma \in J}$ of subsets of $\cantor$ such that for all $\sigma=(u,c,v,d) \in J$,\smallskip

(a) $C_\sigma$ is semi-positive,\smallskip

(b) $C_\sigma$ separates $\pr[T_{(u,c)}]$ from $\pr[S_{(v,d)}]$.

From this it follows that $C:=C_\emptyset$ is Borel semi-positive which separates $A =  \pr[T_\emptyset]$ from $B =  \pr[S_\emptyset]$.

Now we proceed to the definition of the sets $C_\sigma$. Let $\sigma = (u,c,v,d) \in J$, and assume that $C_{\sigma'}$ has been defined and satisfies (a) and (b) above for all $\sigma' \in J$, which extend $\sigma$ properly. Clearly we have that
\begin{align*}
\pr[T_{(u,c)}] =& \ \cup_{(t,n) \in 2 \times \om} \ \pr[T_{\cn{(u,c)}{(t,n)}}], \quad \text{and}\\
\pr[S_{(v,d)}] =& \ \cup_{(s,m) \in 2\times\om} \ \pr[S_{\cn{(v,d)}{(s,m)}}].
\end{align*}
We will define a family $(D^\sigma_{(t,n,s,m)})_{t,n,s,m}$ of semi-positive sets such that for all $(t,n,s,m)$ the set $D^\sigma_{(t,n,s,m)}$ separates $\pr[T_{\cn{(u,c)}{(t,n)}}]$ from $\pr[S_{\cn{(v,d)}{(s,m)}}]$. Then it is easy to see that the set
\[
C_\sigma : = \cup_{(t,n) \in 2 \times \om} \cap_{(s,m) \in 2\times\om} D^\sigma_{(t,n,s,m)}
\]
separates $\pr[T_{(u,c)}]$ from $\pr[S_{(v,d)}]$. Moreover the preceding $C_\sigma$ is clearly semi-positive.

Now we proceed to the definition of $(D^\sigma_{(t,n,s,m)})_{t,n,s,m}$. Suppose that  $t,n,s,m$ are given naturals with $t,s \leq 1$.

\emph{Case 1}. $\cn{\sigma}{(s,n,t,m)} \not \in J$. We consider the following sub-cases. 

If $\cn{(u,c)}{(t,n)} \not \in T$ then $\pr[T_{\cn{(u,c)}{(t,n)}}] = \emptyset$, so the set $D^\sigma_{(s,n,t,m)} = \emptyset$ separates $\pr[T_{\cn{(u,c)}{(t,n)}}]$ from $\pr[S_{\cn{(v,d)}{(s,m)}}]$ and is semi-positive. 

If $\cn{(u,c)}{(t,n)} \in T$ and $ \cn{(v,d)}{(s,m)} \not \in S$, then $\pr[S_{\cn{(v,d)}{(s,m)}}] = \emptyset$, so the set $D^\sigma_{(s,n,t,m)} = \cantor$ separates $\pr[T_{\cn{(u,c)}{(t,n)}}]$ from $\pr[S_{\cn{(v,d)}{(s,m)}}]$ and is semi-positive. 

The remaining sub-case is when $\cn{(u,c)}{(t,n)} \in T$ and $\cn{(v,d)}{(s,m)} \in S$. Since $\cn{\sigma}{(t,n,s,m)} = \cn{(u,c,v,d)}{(t,n,s,m)} \not \in J$ and $\sigma \in J$, it follows that $t=1$ and $s = 0$. Hence we take the positive set $D^\sigma_{(t,n,s,m)} = \set{x \in \cantor}{x(\lh(u)) = 1}$. Then the latter separates $\pr[T_{\cn{(u,c)}{(1,n)}}]$ from $\pr[S_{\cn{(v,d)}{(0,m)}}]$.

\emph{Case 2.} $\cn{\sigma}{(s,n,t,m)} \in J$. In this case we take $D^\sigma_{(t,n,s,m)}$ to be $C_{\cn{\sigma}{(t,n,s,m)}}$, which by the inductive hypothesis is defined, is semi-positive and separates $\pr[T_{\cn{(u,c)}{(t,n)}}]$ from $\pr[S_{\cn{(v,d)}{(s,m)}}]$. 

This completes the inductive step, and so we have a proof for Theorem \ref{theorem Dyck}. 

The construction of the function $u$ proceeds as in the proof of the Suslin-Kleene Theorem \cite[7B.3, 7B.4]{yiannis_dst} using Kleene's Recursion Theorem \cite[7A.2]{yiannis_dst}. (Notice that the cases in our proof are defined by $(T,S)$-recursive conditions.) 

To explain this better, using the preceding analysis we can find a partial recursive function $d(\ep,\zeta,\eta,i,t,n,s,m)$, where $\zeta, \eta \in \cantor$ such that whenever $\scode^{\cantor}_1(\alpha)$ is monotone and disjoint from $\scode^{\cantor}_1(\beta)$, $\zeta_T, \zeta_S \in \cantor$ encode the trees $T$ and $S$ which correspond to the analytic sets $A = \scode^{\cantor}_1(\alpha)$ and $B = \scode^{\cantor}_1(\beta)$, and $i \in \om$ encodes a tuple $\sigma$ with $\sigma \in J$, then $d(\ep,\zeta_T,\zeta_S,i,t,n,s,m)$ is defined and gives a \emph{Borel code} for $D^\sigma_{(t,n,s,m)}$ according to the preceding case distinction.  \footnote{By ``$\zeta_T$ encodes $T$" we mean that $T = \set{\dec{n}}{\zeta_T(n)}=1$. Also the term ``$i$ encodes the tuple $\sigma = (u,c,v,d)$" means that $(i)_0, (i)_1, (i)_2, (i)_3 \in \Seq$ and that the sequences $\dec{(i)_k}$, $k= 0,1,2,3$ have the same length.}

Then we proceed exactly as in the proof of \cite[7B.3]{yiannis_dst} to obtain a function $v(\zeta,\eta)$ such that whenever $\zeta_T$ and $\zeta_S$ encode trees $T$ and $S$ as above then $v(\zeta_T,\zeta_S)$ is a Borel code for a separating semi-positive set $C$. We omit the details.

It remains to show that the preceding $\zeta_T$ can be obtained as a recursive function of $\alpha$. (See the proof of \cite[7B.4]{yiannis_dst} - the $\zeta_1$ is $\mathbf{u}_1(\alpha)$, which by a typo is also denoted also by $\theta_1(\alpha)$.)

This is fairly easy to do; we consider the universal set $\scode^{\cantor}_1 \subseteq \cantor \times \baire$ and a recursive tree $\tilde{T}$ of triples such that
\[
\scode^{\baire \times\cantor}_1(\alpha,x) \iff (\exists \gamma)(\forall t)[(\alpha \upharpoonright t, x \upharpoonright t, \gamma \upharpoonright t) \in \tilde{T}].
\]
Then $\zeta_T \equiv \zeta(\alpha) \in \cantor$ is given by
\begin{align*}
\zeta(\alpha)(n) = 1 
\iff& \ (n)_0, (n)_1 \in \Seq \ \& \ \lh(\dec{(n)_0}) = \lh(\dec{(n)_1})\\
& \ \& \ (\alpha \upharpoonright \lh(\dec{(n)_0}), \dec{(n)_0}, \dec{(n)_1} \in \tilde{T}).
\end{align*}
This shows how to obtain the function $u$ as in the statement of Theorem \ref{theorem uniformity version Dyck}.

\subsection{Semi-positive codes.}

\label{subsection semi-positive codes}

We introduce the following hierarchy of the family $\spclass$ of all semi-positive subsets of $\cantor$,
\begin{align*}
V_0 =& \ \emptyset, \quad V_1 = \cantor, \quad V_{n+2}:=\posb_n = \set{x \in \cantor}{x(n) =1 }, \quad \n;\\ 
\spclass_0 =& \ \set{V_n}{\n};\\
\spclass_\xi =& \ \set{\cup_{i \in \om}\cap_{j \in \om}A_{ij}}{\text{for all $i,j$ there is $\xi_{ij} < \xi$ such that} \ A_{ij} \in \spclass_{\xi_{ij}}},\\
& \ \text{where} \ \hspace*{0mm} 1 \leq \xi < \om_1.
\end{align*}
It is not difficult to verify that $\spclass_\eta \subseteq \spclass_\xi$ for all countable $\eta < \xi$, and that
\[
\spclass = \text{the family of all semi-positive sets} = \cup_{\xi < \om_1} \spclass_\xi.
\]
We also define the family $(\spcode_\xi)_{\xi < \om_1}$ of codes for $\spclass$,
\begin{align*}
\alpha \in \spcode_0 \iff& \ \alpha(0) = 0\\
\alpha \in \spcode_\xi \iff& \ \alpha(0) = 1 \ \& \ (\forall i,j)(\exists \eta < \xi)[\rfn{\alpha^\ast}(\langle i,j \rangle) \in \spcode_\eta],
\end{align*}
for all $\xi > 0$ and all $\alpha \in \baire$, and
\[
\spcode = \cup_{\xi < \om_1}\spcode_\xi.
\]
The members of $\spcode$ will be called \emph{semi-positive codes}. It is immediate that $\spcode_\eta \subseteq \spcode_\xi$ for all $1 \leq \eta < \xi$.

Notice the the set of all semi-positive codes is essentially the same as the set $\bcodefam$ of all Borel codes, the only difference arising from the fact that the function $(i,j) \mapsto \langle i,j \rangle$ is not surjective (for example it does not obtain the value $\langle 0,0,0\rangle$ and $\rfn{\alpha^\ast}(\langle 0,0,0 \rangle)$ can be an arbitrary member of $\baire$ even if $\alpha \in \spcode$). However this difference is a minor point.\footnote{Actually we could replace the function $(i,j) \mapsto \langle i,j \rangle$ with some recursive bijection between $\om^2$ to $\om$. Under this modification the sets $\bcodefam$, $\spcode$ would be exactly the same. We find it though counter-intuitive to use $\bcodefam$ in the place of $\spcode$, and thus we keep the definition above.} It is easy to see that the sets $\bcodefam$ and $\spcode$ are recursively bi-reducible, \ie there are recursive functions $f, g: \baire \to \baire$ such that $\bcodefam = f^{-1}[\spcode]$ and $\spcode = g^{-1}[\bcodefam]$. As a consequence we have that the set $\spcode$ is $\pii$ and not Borel. (Of course one can also prove these facts directly.)

Given $\alpha \in \spcode$ we put
\[
\normsp{\alpha} = \text{the least $\xi < \om_1$ such that} \ \alpha \in \spcode_\xi.
\]
It is evident that $\normsp{\rfn{\alpha^\ast}(\langle i,j \rangle)} < \normsp{\alpha}$ for $\alpha \in \cup_{1 \leq \xi < \om_1} \spcode_\xi$. Moreover it holds
\[
\normsp{\alpha} = \sup_{i,j \in \om} \left(\normsp{\rfn{\alpha^\ast}(\langle i,j \rangle)} + 1 \right)
\]
for all $\alpha \in \spcode$ with $\normsp{\alpha} > 0$.

The coding $\spcfun{\xi}$ of the family $\spclass_\xi$ is defined by recursion on $\xi$,
\begin{align*}
\spcfun{0} : \spcode_0 \surj \spclass_0:& \ \spcfun{0}(\alpha) = V_{\alpha^\ast(1)}\\
\spcfun{\xi} : \spcode_\xi \surj \spclass_\xi:& \ \spcfun{\xi}(\alpha) = \cup_i\cap_j \spcfun{\normsp{\rfn{\alpha^\ast}(\langle i,j \rangle)}}(\rfn{\alpha^\ast}(\langle i,j \rangle)).
\end{align*}
It is again easy to verify that each $\spcfun{\xi}$ is indeed surjective, and that $\spcfun{\xi} \upharpoonright \spcode_\eta = \spcfun{\eta}$ for all countable $\eta < \xi$. Thus the function
\[
\spcf : = \cup_{\xi < \om_1}\spcfun{\xi} : \spcode \surj \spclass
\]
defines a coding of the family $\spclass$. We say that an $\alpha \in \spcode$ is a \emph{code for the semi-positive} set $A \subseteq \cantor$ if $A = \spcf(\alpha)$.\bigskip

Now we are ready to \emph{prove the second part of Theorem \ref{theorem uniformity version Dyck}}.\smallskip

The difference from the first part is that here we choose the codes for the separating sets $D^\sigma_{(t,n,s,m)}$ with respect to $\spcf$. We give the core of the argument.

First we fix the following codes for the members of $\spclass_0$,
\[
\beta_k =
\begin{cases}
(0,0,\dots), & \ k=0,\\
\cn{(0,1)}{(0,0,\dots)}, & \ k = 1,\\
\cn{(0,k-2)}{(0,0,\dots)}, & \ k > 1.
\end{cases}
\]
If $D^\sigma_{(s,n,t,m)}$ is chosen from $\spclass_0$ we take the coding provided by the preceding $\beta_k$'s. In order to define a code of 
\[
C_\sigma:= \cup_{(t,n) \in 2 \times \om} \cap_{(s,m) \in 2\times\om} D^\sigma_{(t,n,s,m)}
\]
out of the codes $\rfn{\alpha}(\langle \langle t,n \rangle, \langle s,m \rangle \rangle)$ of $D^\sigma_{(t,n,s,m)}$ we do the following procedure. From Kleene's Recursion Theorem \cite[7A.2]{yiannis_dst} we can find a partial recursive function $\rho: \baire \rightharpoonup \baire$ such that for all $\alpha \in \baire$ for which $\rfn{\alpha}(\langle \langle t,n \rangle, \langle s,m \rangle \rangle) \downarrow$ for all $t,n,s,m$ it holds $\rho(\alpha) \downarrow$ and:

\begin{align*}
\rho(\alpha)(0) =& \ 1;\\
\rfn{\rho(\alpha)^\ast}(\langle \langle t,n \rangle, \langle s,m \rangle \rangle) =& \
\begin{cases}
\rfn{\alpha}(\langle \langle t,n \rangle, \langle s,m \rangle \rangle), & \ (t,n), (s,m) \in \om \times 2,\\
(k \mapsto 1), & \ (t,n) \in \om \times 2, \ s \in \om, \ m \geq 2,\\
(k \mapsto 0), & \ t,s,m \in \om, \ n \geq 2;
\end{cases}\\
\rfn{\rho(\alpha)^\ast}(k) =& \ 0  \quad \text{in all other cases}.
\end{align*}
It is then clear that for all $\alpha \in \spcode$ it holds
\begin{align*}
\spcf(\rho(\alpha)) 
=& \ \cup_{(t,n) \in 2 \times \om} \cap_{(s,m) \in 2\times\om} \ \spcf(\rfn{\rho(\alpha)^\ast}(\langle \langle t,n \rangle, \langle s,m \rangle \rangle))\\
=& \ \cup_{(t,n) \in 2 \times \om} \cap_{(s,m) \in 2\times\om} \ \spcf(\rfn{\alpha}(\langle \langle t,n \rangle, \langle s,m \rangle \rangle))\\
=& \ \cup_{(t,n) \in 2 \times \om} \cap_{(s,m) \in 2\times\om} \ D^\sigma_{(t,n,s,m)}\\
=& \ C_\sigma.
\end{align*}
Thus the function $\rho$ delivers a recursive way for obtaining a code for $C_\sigma$ out of recursively-given codes for $D^\sigma_{(t,n,s,m)}\;$.

\subsection{\HYP-semi-positive is exactly \del \ and semi-positive.}

First we give the \HYP \ version of semi-positive sets.

\begin{definition}\normalfont
\label{definition effective semipositive}
Let $\ep \in \baire$. The family $\speffclass_\xi(\ep)$ of all \emph{$\HYP(\ep)$-semi-positive} sets of \emph{order $\xi$} is defined by
\[
\speffclass_\xi(\ep) = \set{\spcf(\alpha)}{\alpha \ \text{is $\ep$-recursive and} \ \normsp{\alpha} = \xi}.
\]
The family of all \emph{$\HYP(\ep)$-semi-positive sets} is
\[
\cup_{\xi} \ \speffclass_\xi(\ep).
\]
When $\ep$ is recursive we say ``\HYP-semi-positive'' instead of ``$\HYP(\ep)$-semi-positive'' and similarly we write $\speffclass$ instead of $\speffclass(\ep)$.

It is clear that a set is semi-positive exactly when it is $\HYP(\ep)$-semi-positive for some $\ep \in \baire$. 
\end{definition}

Corollary \ref{corollary effective Dyck} now reads that the $\del$ semi-positive sets are exactly the \HYP-semi-positive ones, (which are exactly the $\del$ and monotones).\smallskip

We prove some basic facts about the preceding notions in a series of Lemmata. Recall that a \emph{norm} on a set $P \subseteq \ca{X}$ is any function $\varphi: P \to$ Ordinals. Given a pointclass $\Gamma$ and $P \subseteq \ca{X}$ in $\Gamma$ we say that a norm $\varphi$ on $P$ is a \emph{$\Gamma$-norm} if there are relations $\leq_\Gamma, \leq_{\neg \Gamma} \subseteq \ca{X} \times \ca{X}$ in $\Gamma$ and $\neg \Gamma$ respectively such that for all $y \in P$ and all $x \in \ca{X}$ it holds
\[
[P(x) \ \& \ \varphi(x) \leq \varphi(y)] \iff x \leq_{\Gamma} y \iff x \leq_{\neg \Gamma} y.
\]
see \cite[4B]{yiannis_dst}. For example the function $\beta \in \WO \mapsto |\beta|$ is a $\pii$ norm on $\WO$, \cf \cite[4A]{yiannis_dst}. Here we have another example of a norm, namely $\alpha \in \spcode \mapsto \normsp{\alpha}$. It can be proved that it is also a $\pii$-norm, but here we want to do something different. More specifically we need to compare $\normsp{\alpha}$ with $|\beta|$ in a similar fashion as above. Although we need only one of all possible combinations, we give the full result, as we think that it is interesting in its own right.

\begin{lemma}
\label{lemma norm comparison}
There are $\sig$ relations $\leq_{\sig}$, $\preceq_{\sig}$ and $\pii$ relations $\leq_{\pii}$, $\preceq_{\pii}$ which satisfy for all $\alpha, \beta \in \baire$ the following properties.
\begin{enumerate}
\item[\tu{(}a\tu{)}] If $\beta \in \WO$ then
\[
[\alpha \in \spcode \ \& \ \normsp{\alpha} \leq |\beta|] \iff \alpha \leq_{\sig} \beta \iff \alpha \leq_{\pii} \beta.
\]
\item[\tu{(}b\tu{)}] If $\alpha \in \spcode$ then
\[
[\beta \in \WO \ \& \ |\beta| \leq \normsp{\alpha}] \iff \beta \preceq_{\sig} \alpha \iff \beta \preceq_{\pii} \alpha.
\]
\end{enumerate}
In particular the function $\alpha \in \spcode \mapsto \normsp{\alpha}$ is a $\pii$-norm on $\spcode$.
\end{lemma}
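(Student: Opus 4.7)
My plan is to reduce both parts of the lemma to comparisons of a canonical tree $T_\alpha$ with the linear ordering $\leq_\beta$ using standard simulation-style $\sig$ witnesses, and to extract the $\pii$-versions from the $\sig$-versions by a shift-by-one trick. First I would associate to each $\alpha\in\baire$ a tree $T_\alpha$ whose nodes are finite sequences $(k_0,\dots,k_{n-1})$ of pair codes $k_l=\langle i_l,j_l\rangle$, such that, setting $\alpha_0=\alpha$ and $\alpha_{l+1}=\rfn{\alpha_l^\ast}(k_l)$, every $\alpha_l$ with $l<n$ converges and satisfies $\alpha_l(0)=1$. The relation ``$\dec{s}\in T_\alpha$'' is then $\Sigma^0_1$ in $(\alpha,s)$, and a direct induction mirroring the recursive definitions of $\spcode_\xi$ and $\spcfun{\xi}$ shows that $\alpha\in\spcode$ iff $T_\alpha$ is well-founded, in which case its tree rank equals $\normsp{\alpha}$. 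Restricting children to pair codes is what prevents junk values of $\rfn{\alpha^\ast}$ at non-pair inputs from inflating the rank.

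Using $T_\alpha$ I would declare $\alpha\leq_{\sig}\beta$ to hold iff there exists $\gamma\in\baire$ that is a partial rank function ignoring the root: whenever $\dec{s}\in T_\alpha$ is non-empty, $\gamma(s)\in\mathrm{Field}(\beta)$; and whenever $\emptyset\neq\dec{s}\subsetneq\dec{t}$ with both in $T_\alpha$, $\gamma(t)<_\beta\gamma(s)$. Leaving the root unconstrained is what delivers the non-strict bound $\normsp{\alpha}\leq|\beta|$ rather than $<$: when $\beta\in\WO$, existence of such $\gamma$ is equivalent to $\alpha\in\spcode$ together with $\normsp{\alpha}\leq|\beta|$, the backward direction being witnessed by sending each non-root node of $T_\alpha$ to any element of $\mathrm{Field}(\beta)$ whose $\beta$-rank equals the $T_\alpha$-rank of that node. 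Dually I would declare $\beta\preceq_{\sig}\alpha$ to hold iff there exists $\gamma$ coding $R\subseteq\om\times\om$ such that (i) every $n\in\mathrm{Field}(\beta)$ has some $s$ with $R(n,s)$ and $\dec{s}\in T_\alpha$, and (ii) $R(n,s)\wedge m<_\beta n$ implies the existence of $t$ with $R(m,t)$ and $\dec{s}\subsetneq\dec{t}\in T_\alpha$. Under $\alpha\in\spcode$, the existence of $R$ forces $\beta\in\WO$ automatically (an infinite $<_\beta$-descent through $R$ would yield an infinite branch of $T_\alpha$), and an induction on $\beta$-rank gives $|\beta|\leq\normsp{\alpha}$; conversely one sets $R(n,s)$ iff $\dec{s}\in T_\alpha$ has $T_\alpha$-rank at least the $\beta$-rank of $n$.

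For the $\pii$-versions I would use a shift trick. Let $\alpha^+$ be the recursive modification of $\alpha$ with $\alpha^+(0)=1$, $\rfn{(\alpha^+)^\ast}(\langle 0,0\rangle)=\alpha$, and $\rfn{(\alpha^+)^\ast}(\langle i,j\rangle)$ a $\spcode_0$-code of $\emptyset$ for $(i,j)\neq(0,0)$, so $\normsp{\alpha^+}=\normsp{\alpha}+1$ whenever $\alpha\in\spcode$; similarly let $\beta^+$ be the primitive recursive modification of $\beta$ that adds one new top element above $\mathrm{Field}(\beta)$, so $|\beta^+|=|\beta|+1$ whenever $\beta\in\WO$. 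Put
\[
\alpha\leq_{\pii}\beta \iff \alpha\in\spcode\wedge\lnot(\beta^+\preceq_{\sig}\alpha),\qquad \beta\preceq_{\pii}\alpha \iff \beta\in\WO\wedge\lnot(\alpha^+\leq_{\sig}\beta).
\]
Each conjunct is $\pii$, so these are $\pii$ relations. Combining the previously-established $\sig$-equivalences with the shift arithmetic yields: under $\beta\in\WO$, $\alpha\leq_{\pii}\beta$ says exactly $\alpha\in\spcode$ with $\normsp{\alpha}\leq|\beta|$; under $\alpha\in\spcode$, $\beta\preceq_{\pii}\alpha$ says exactly $\beta\in\WO$ with $|\beta|\leq\normsp{\alpha}$.

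The ``in particular'' conclusion that $\normsp{\cdot}$ is a $\pii$-norm on $\spcode$ follows by rerunning the same tree-simulation arguments with $\leq_\beta$ replaced by the tree $T_{\alpha'}$ associated to an arbitrary $\alpha'\in\spcode$, producing $\sig$ and $\pii$ comparison relations directly between $\normsp{\alpha}$ and $\normsp{\alpha'}$. The main obstacle I expect is the careful bookkeeping of strict versus non-strict rank comparisons, both when verifying $\mathrm{rank}(T_\alpha)=\normsp{\alpha}$ under the pair-code restriction and when tuning the shift trick so that the negated $\sig$-relations yield $\leq$ rather than $<$ in both clauses.
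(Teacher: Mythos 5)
Your architecture is genuinely different from the paper's: the paper works with monotone $\sig$- and $\pii$-on-their-kind operators $\Phi, \Psi$ and extracts $\leq_{\pii}$, $\leq_{\sig}$, $\preceq_{\pii}$, $\preceq_{\sig}$ as the least and largest fixed points via the Norm Induction Theorem and its dual, then combines them for the $\pii$-norm claim. You instead simulate the recursion through a canonical tree $T_\alpha$, read off $\leq_{\sig}$ and $\preceq_{\sig}$ from the existence of order-(anti)preserving $\Sigma^1_1$-witnesses against $\leq_\beta$, and recover the $\pii$-versions by negating shifted $\sig$-versions. The shift-by-one trick is a nice substitute for the least/largest fixed-point dichotomy. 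However there is a genuine gap in the tree construction itself.

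The central claim ``$\alpha\in\spcode$ iff $T_\alpha$ is well-founded'' fails in the direction you need. As you defined $T_\alpha$, a node $(k_0,\dots,k_{n-1})$ is in the tree only if every $\alpha_l$ with $l<n$ converges and has first coordinate $1$. So if some $\rfn{\alpha_l^\ast}(k_l)$ diverges, or converges to a point whose first coordinate is not in $\{0,1\}$, the branch simply stops: the offending node becomes a leaf and $T_\alpha$ gets \emph{smaller}, not ill-founded. Concretely, take $\alpha$ with $\alpha(0)=1$ and $\rfn{\alpha^\ast}(\langle i,j\rangle)\uparrow$ for all $i,j$: then $T_\alpha$ consists of the root and a single level of leaves, rank $1$, well-founded, but $\alpha\notin\spcode$. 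Consequently, in the backward direction of (a) a partial rank function $\gamma:T_\alpha\to\mathrm{Field}(\beta)$ can exist with $\beta\in\WO$ while $\alpha\notin\spcode$, so $\alpha\leq_{\sig}\beta$ holds but the left side is false. The same defect propagates to $\preceq_{\sig}$ and, via the shift trick, to both $\pii$-relations. The standard repair is to redesign $T_\alpha$ so that any failure of the $\spcode$-recursion (non-pair-code child, divergence, first coordinate $\notin\{0,1\}$ before reaching a base code) opens an infinite branch, so that ill-foundedness coincides with $\alpha\notin\spcode$; only then do your rank and simulation arguments close.

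A secondary inaccuracy: ``$\dec{s}\in T_\alpha$'' is not $\Sigma^0_1$ in $(\alpha,s)$. Deciding membership at depth $n$ requires chaining $n$ applications of $\rfn{\cdot}$, and already $\rfn{\gamma}(m)\!\downarrow$ is a $\Pi^0_2$ condition; the iterates $\alpha_1,\dots,\alpha_{n-1}$ are $\Pi^0_2$-singletons and must themselves be quantified. The relation can be written both as $\exists\gamma$ and as $\forall\gamma$ over arithmetic matrices (quantifying the whole sequence of iterates at once), so it is $\Delta^1_1$, which is still low enough for your final $\sig$/$\pii$ complexity counts to come out right, but the $\Sigma^0_1$ claim should be dropped and replaced with the $\Delta^1_1$ computation so that the reader sees why $\exists\gamma$ over your matrix really does land in $\sig$.
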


\begin{proof}
We consider the operator $\Phi: \ca{P}(\baire \times \baire) \to  \ca{P}(\baire \times \baire)$ defined by
\begin{align*}
\Phi(A) =& \ \set{(\alpha,\beta) \in \baire \times \baire}{\alpha(0) = 0 \ \vee\\
&\hspace*{20mm} \vee \ (\forall i,j)(\exists n)[\rfn{\alpha^\ast}(\langle i,j \rangle) \downarrow \ \& \ (\rfn{\alpha^\ast}(\langle i,j \rangle),\restrict{\beta}{n}) \in A]}. 
\end{align*}
It is clear that $\Phi$ is monotone and $\pii$ ($\sig$) on $\pii$ (resp. $\sig$). It is also not hard to verify that for every fixed point $\leq$ of $\Phi$ and every $(\alpha, \beta) \in \baire \times \WO$ it holds
\[
[\alpha \in \spcode \ \& \ \normsp{\alpha} \leq |\beta|] \iff \alpha \leq \beta.
\]
The left-to-right direction is proved by induction on $\normsp{\alpha}$ and the converse by induction on $|\beta|$. Then we take $\leq_{\pii}$ and $\leq_{\sig}$ to be the least and the largest (respectively) fixed point of $\Phi$. The former relation is $\pii$ by the Norm Induction Theorem \cite[7C.8]{yiannis_dst} and the latter is $\sig$ by its dual form.

The second assertion is proved similarly by considering the operator
\begin{align*}
&\hspace*{-12mm}\Psi: \ca{P}(\baire \times \baire) \to  \ca{P}(\baire \times \baire):\\
\Psi(A) =& \ \set{(\beta,\alpha) \in \baire \times \baire}{|\beta| = 0 \ \vee\\
&\hspace*{20mm} \vee \ (\forall n)(\exists i,j)[\rfn{\alpha^\ast}(\langle i,j \rangle) \downarrow \ \& \ (\restrict{\beta}{n},\rfn{\alpha^\ast}(\langle i,j \rangle)) \in A]}. 
\end{align*}
Finally we notice that for all $\alpha \in \spcode$ and all $\gamma \in \baire$ it holds
\begin{align*}
[\gamma \in \spcode \ \& \ \normsp{\gamma} \leq \normsp{\alpha}] 
\iff& \ (\exists \beta)[\beta \preceq_{\sig}\alpha \ \& \ \gamma \leq_{\sig} \beta]\\
\iff& \ (\forall \beta)[(\beta \preceq_{\sig}\alpha \ \& \ \alpha \leq_{\sig}\beta) \ \longrightarrow \ \gamma \leq_{\pii} \beta].
\end{align*}
The preceding equivalences show that $\normsp{\cdot}$ is a $\pii$-norm on $\spcode$. (Notice that by exchanging $\leq$ and $\preceq$ on the right-hand side of the last two equivalences we obtain another proof that $|\cdot|$ is $\pii$-norm on $\bcodefam$.) This finishes the proof.
\end{proof}

\begin{lemma}
\label{lemma spc norm is recursive ordinal}
For all $\alpha \in \spcode$ the norm $\normsp{\alpha}$ is an $\alpha$-recursive ordinal.
\end{lemma}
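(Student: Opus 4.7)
The plan is to apply the classical Boundedness Theorem for $\sig$-subsets of $\WO$ to the $\alpha$-section of the relation $\preceq_{\sig}$ furnished by Lemma \ref{lemma norm comparison}(b).

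First, I would fix $\alpha \in \spcode$ and consider
\[
A_\alpha := \set{\beta \in \baire}{\beta \preceq_{\sig} \alpha}.
\]
Since $\preceq_{\sig}$ is a $\sig$ relation on $\baire \times \baire$ uniformly in both arguments, the section $A_\alpha$ is a $\sig(\alpha)$ set. Combining the hypothesis $\alpha \in \spcode$ with Lemma \ref{lemma norm comparison}(b), this section coincides with $\set{\beta \in \WO}{|\beta| \leq \normsp{\alpha}}$, so in particular $A_\alpha \subseteq \WO$.

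Second, I would invoke the $\sig$-Boundedness Theorem relativized to $\alpha$ (\cf \cite[4C.11]{yiannis_dst}): every $\sig(\alpha)$ subset of $\WO$ has its norms bounded below $\ckr{\alpha}$. Thus $\xi := \sup\set{|\beta|}{\beta \in A_\alpha} < \ckr{\alpha}$. Since $\normsp{\alpha}$ is a countable ordinal, one may choose any $\beta_0 \in \WO$ with $|\beta_0| = \normsp{\alpha}$; the equivalence above then places $\beta_0$ in $A_\alpha$, whence
\[
\normsp{\alpha} = |\beta_0| \leq \xi < \ckr{\alpha},
\]
which is precisely the assertion that $\normsp{\alpha}$ is $\alpha$-recursive.

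The heavy lifting has already been carried out in Lemma \ref{lemma norm comparison} through the monotone operator $\Psi$, so no real obstacle arises here; the only point worth checking is that $\preceq_{\sig}$ is genuinely $\sig$ \emph{uniformly} in both its arguments, so that fixing the second coordinate $\alpha$ truly produces a $\sig(\alpha)$ section. This is automatic since $\preceq_{\sig}$ was constructed as the largest fixed point of $\Psi$, an operator defined without built-in parameters in $\alpha$.
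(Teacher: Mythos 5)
Your proof is correct, but it takes a genuinely different route from the paper's. The paper argues by induction on $\normsp{\alpha}$: from the inductive hypothesis it has, for each $i,j$, some $\alpha$-recursive $\beta$ witnessing $\normsp{\rfn{\alpha^\ast}(\langle i,j \rangle)} \leq |\beta|$ via the $\pii$ relation $\leq_{\pii}$ of Lemma~\ref{lemma norm comparison}(a); it then uses the $\Delta$-Selection Principle to choose such witnesses $\del(\alpha)$-recursively, and concludes by appealing to Spector's theorem that $\ckr{\alpha}$ is the (non-attained) supremum of the $\del(\alpha)$-recursive ordinals. You instead take the $\alpha$-section $A_\alpha$ of the $\sig$ relation $\preceq_{\sig}$ of Lemma~\ref{lemma norm comparison}(b), observe that $A_\alpha$ is a $\sig(\alpha)$ subset of $\WO$ whose ranks are exactly the ordinals $\leq \normsp{\alpha}$, and apply the $\sig$-Boundedness Theorem relativized to $\alpha$. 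This is cleaner: it avoids the induction and the $\Delta$-Selection step entirely, and uses boundedness as a black box rather than reconstructing it in the special case. Both proofs lean on Lemma~\ref{lemma norm comparison}, just on opposite halves of it (the paper on $\leq_{\pii}$, you on $\preceq_{\sig}$). One small caveat: the citation \cite[4C.11]{yiannis_dst} is likely not the right item number — the $\sig$-Boundedness Theorem for $\WO$ lives in \S4A of that book — but this does not affect correctness.
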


\begin{proof}
We prove the assertion by induction on $\normsp{\alpha}$. 

The claim is clear for $\normsp{\alpha} = 0$. Let $\alpha \in \spcode$ be such that $\normsp{\alpha} > 0$, so that $\normsp{\alpha}$ is the supremum of all ordinals of the form $\normsp{\rfn{\alpha^\ast}(\langle i,j \rangle)}+1$.

From our inductive hypothesis it follows that for all $i,j$ the ordinal $\normsp{\rfn{\alpha^\ast}(\langle i,j \rangle)}$ is $\alpha$-recursive. Hence for all $i,j \in \om$ there is some $\alpha$-recursive $\beta \in \WO$, whose order type is $\normsp{\rfn{\alpha^\ast}(\langle i,j \rangle)}$.

We define
\[
P(i,j,\beta) \iff \beta \in \WO \ \& \ \rfn{\alpha^\ast}(\langle i,j \rangle) \leq_{\pii} \beta, 
\]
where $\leq_{\pii}$ is as in Lemma \ref{lemma norm comparison}. From the preceding we have that for all $i,j \in \om$ there is some $\alpha$-recursive $\beta$ such that $P(i,j,\beta)$. Moreover the set $P$ is clearly $\pii$. It follows from the $\Delta$-Selection Principle \cf \cite[4D.6]{yiannis_dst} that there is some $\del(\alpha)$-recursive function $f: \om^2 \to \baire$ such that for all $i,j \in \om$ it holds $P(i,j,f(i,j))$, \ie $f(i,j) \in \WO$ and
\[
\normsp{\rfn{\alpha^\ast}(\langle i,j \rangle)} \leq |f(i,j)|
\]
for all $i,j \in \om$.

It is easy to find a recursive function $g: \LO \to \LO$ such that whenever $\beta \in \WO$ it holds $g(\beta) \in \WO$ and $|g(\beta)| = |\beta| + 1$. Summing up:
\begin{align*}
\normsp{\alpha} 
=& \ \sup_{i,j \in \om} \left(\normsp{\rfn{\alpha^\ast}(\langle i,j \rangle)} + 1 \right)
\leq \ \sup_{i,j \in \om} |g(f(i,j))| < \ckr{\alpha},
\end{align*}
where in the last inequality we used that: (i) the function $g \circ f$ is $\del(\alpha)$-recursive and so the supremum $\sup_{i,j \in \om} |g(f(i,j))|$ is a $\del(\alpha)$-recursive ordinal; (ii) the supremum of all $\del(\alpha)$-recursive ordinals is $\ckr{\alpha}$ (Spector \cite{spector_recursive_well-orderings}); the latter supremum is not a maximum. Hence $\normsp{\alpha}$ is an $\alpha$-recursive ordinal.
\end{proof}

\begin{corollary}
\label{corollary union spc stops at the church-kleene ordinal}
It holds
\[
\cup_{\xi} \ \speffclass_\xi = \cup_{\xi < \ck} \ \speffclass_\xi.
\]
\end{corollary}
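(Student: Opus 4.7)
The plan is to observe that this corollary is an immediate consequence of Lemma \ref{lemma spc norm is recursive ordinal} together with the definition of $\speffclass_\xi$.

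The inclusion $\cup_{\xi < \ck} \speffclass_\xi \subseteq \cup_{\xi} \speffclass_\xi$ is trivial, so the only direction requiring argument is $\subseteq$. Take $A \in \cup_\xi \speffclass_\xi$; by Definition \ref{definition effective semipositive} (with the recursive parameter), there is a recursive $\alpha \in \spcode$ such that $A = \spcf(\alpha)$ and $\normsp{\alpha} = \xi$. First I would apply Lemma \ref{lemma spc norm is recursive ordinal} to conclude that $\normsp{\alpha}$ is an $\alpha$-recursive ordinal. Since $\alpha$ is recursive, the relativized Church–Kleene ordinal $\ckr{\alpha}$ coincides with $\ck$, so $\xi = \normsp{\alpha} < \ck$. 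Hence $A \in \speffclass_\xi$ with $\xi < \ck$, as required.

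I expect no real obstacle here; the whole content of the argument is shifted into the previous lemma, and this corollary is just the bookkeeping step that translates the bound on $\normsp{\alpha}$ for recursive $\alpha$ into a bound on the indexing of the hierarchy $(\speffclass_\xi)_\xi$.
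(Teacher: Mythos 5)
Your proof is correct and follows exactly the paper's argument: reduce to the nontrivial inclusion, invoke Lemma \ref{lemma spc norm is recursive ordinal} to get that $\normsp{\alpha}$ is $\alpha$-recursive, and use that $\alpha$ recursive implies $\ckr{\alpha} = \ck$. No differences worth noting.
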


\begin{proof}
For the non-trivial direction, suppose that $A \in \speffclass_\xi$ for some countable ordinal $\xi$, and let $\alpha$ be recursive such that $A = \spcf(\alpha)$ and $\normsp{\alpha} = \xi$. Then from Lemma \ref{lemma spc norm is recursive ordinal} the ordinal $\normsp{\alpha}$ is recursive, \ie $\xi = \normsp{\alpha} < \ck$.
\end{proof}

\begin{lemma}
\label{lemma basic properties of effective semipositive B}
It holds
\[
\cup_{\xi < \ck} \ \speffclass_\xi  \subseteq \del.
\]
In fact the preceding inclusion holds recursively uniformly, \ie there is a recursive function $u = (u_1,u_2): \baire \to \baire \times \baire$ such that for all $\alpha \in \spcode$ the points $u_1(\alpha)$ and $u_2(\alpha)$ are analytic codes for $\spcf(\alpha)$ and $\cantor \setminus \spcf(\alpha)$ respectively.  
\end{lemma}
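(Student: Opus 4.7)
The plan is to prove the uniform assertion first; the inclusion $\cup_{\xi<\ck}\speffclass_\xi \subseteq \del$ then follows by specializing $u$ to recursive $\alpha\in\spcode$. The construction of $u$ follows the inductive shape of $\spcode$: on base codes ($\alpha(0)=0$) we read off a recursive analytic code for the corresponding $V_n$ and for its complement; on compound codes ($\alpha(0)=1$) we assemble codes for $\cup_i\cap_j$ and its De Morgan dual from analytic codes of the sub-problems $\rfn{\alpha^\ast}(\langle i,j\rangle)$. Since the sub-problems refer to $u$ itself, the natural tool is Kleene's Recursion Theorem \cite[7A.2]{yiannis_dst}, exactly as in the two previous uniform constructions in the paper.

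For the assembly step, the goodness of the universal set $\scode^{\cantor}_1$ provides total recursive operators $\phi_\cup,\phi_\cap:\baire\to\baire$ such that whenever $(\gamma)_n$ is an analytic code of $A_n\subseteq\cantor$ for every $n\in\om$, then $\phi_\cup(\gamma)$ and $\phi_\cap(\gamma)$ are analytic codes for $\cup_n A_n$ and $\cap_n A_n$ respectively; these operators are purely formal $s_m^n$-manipulations and are defined on all inputs. For each $n\in\om$ I fix recursive analytic codes $\pi_n,\pi'_n$ of $V_n$ and $\cantor\setminus V_n$, which exist because every $V_n$ is clopen. I would then define a total recursive function $F(\ep,\alpha)=(F_1(\ep,\alpha),F_2(\ep,\alpha))$ by: if $\alpha(0)=0$, put $F(\ep,\alpha)=(\pi_{\alpha^\ast(1)},\pi'_{\alpha^\ast(1)})$; otherwise, use $s_m^n$ to produce $F_1(\ep,\alpha)$ and $F_2(\ep,\alpha)$ which are analytic codes for $\cup_i\cap_j\scode^{\cantor}_1(\rho^1_{ij})$ and $\cap_i\cup_j\scode^{\cantor}_1(\rho^2_{ij})$ respectively, where $\rho^k_{ij}$ denotes the $k$-th coordinate of $\rfn{\ep}(\rfn{\alpha^\ast}(\langle i,j\rangle))$ whenever this converges. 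The assembly is the composition $F_1=\phi_\cup\circ(\text{outer index})\circ\phi_\cap\circ(\text{inner index})$ applied to $(\rho^1_{ij})$, and analogously for $F_2$ with $\phi_\cup$ and $\phi_\cap$ swapped; all operations involved are total recursive at the level of codes.

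By Kleene's Recursion Theorem fix $\ep_0$ with $\rfn{\ep_0}(\alpha)=F(\ep_0,\alpha)$ for all $\alpha$, and set $u(\alpha):=\rfn{\ep_0}(\alpha)$. An induction on $\normsp{\alpha}$ shows that for every $\alpha\in\spcode$ the calls $\rfn{\ep_0}$ on the sub-codes $\rfn{\alpha^\ast}(\langle i,j\rangle)$ converge with the intended meaning, whence $u_1(\alpha)$ and $u_2(\alpha)$ are analytic codes for $\spcf(\alpha)$ and $\cantor\setminus\spcf(\alpha)$; the base case is immediate from the choice of $\pi_n,\pi'_n$, and the inductive step rests on $\spcf(\alpha)=\cup_i\cap_j\spcf(\rfn{\alpha^\ast}(\langle i,j\rangle))$ together with its De Morgan dual $\cantor\setminus\spcf(\alpha)=\cap_i\cup_j(\cantor\setminus\spcf(\rfn{\alpha^\ast}(\langle i,j\rangle)))$ and the inductive hypothesis applied to each sub-code (whose norm is strictly smaller than $\normsp{\alpha}$). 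For the non-uniform inclusion, given $A\in\cup_{\xi<\ck}\speffclass_\xi$ pick a recursive $\alpha\in\spcode$ with $A=\spcf(\alpha)$; then $u_1(\alpha)$ and $u_2(\alpha)$ are recursive, so both $A$ and $\cantor\setminus A$ are $\sig$, \ie $A\in\del$. The only subtlety is bookkeeping---threading the double-indexed sub-codes through the code-manipulation operators---and the substantive content is a routine Recursion-Theorem fixed point, so I do not anticipate any real obstacle.
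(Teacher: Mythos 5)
Your proposal is correct and follows essentially the same route as the paper: the paper simply cites the proof of \cite[7B.5]{yiannis_dst}, and your argument is a faithful reconstruction of that Recursion-Theorem argument adapted to the semi-positive coding $\spcf$ (base codes mapped to fixed recursive analytic codes for the clopen $V_n$, compound codes assembled via $\cup_i\cap_j$ and its De Morgan dual, correctness by induction on $\normsp{\alpha}$).
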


\begin{proof}
Same as the proof of \cite[7B.5]{yiannis_dst}.
\end{proof}\bigskip

With the help of the preceding results we can \emph{prove Corollary \ref{corollary effective Dyck}}.\smallskip

The equivalence between $(i)$ and $(iii)$ is clear from Theorem \ref{theorem Dyck}. The implication $(ii) \Longrightarrow (i)$ is immediate from Corollary \ref{corollary union spc stops at the church-kleene ordinal} and Lemma \ref{lemma basic properties of effective semipositive B}. 

For the implication $(i) \Longrightarrow (ii)$ we consider the function $v$ as in the statement of Theorem \ref{theorem uniformity version Dyck}. Since $A$ is $\del$ there are recursive points $\alpha$ and $\beta$, which are analytic codes for the sets $A$ and $\cantor \setminus A$ respectively. The only set, which separates $A$ from its complement is $A$ itself; therefore $v(\alpha,\beta)$ is a semi-positive code for $A$. Moreover, since $v$ is recursive we have that $v(\alpha,\beta)$ is recursive. It follows that $A$ is $\HYP$-semi-positive.

\section{The uniform Preiss Separation}

As noted in \cite{kechris_classical_dst} the convexly generated sets are exactly the ones, which are obtained from the open convex subsets of $\R^N$ by applying the operations of countable increasing union and countable intersection. In other words we can replace the term ``compact convex" with ``open convex". It is useful in the sequel to give the argument; first we notice that $K = \cap_{n \in \om} \set{x}{d(x,K) < 2^{-n}}$ for every closed $K \subseteq \R^N$, where $d(x,K) = \inf\set{\normR{x-y}}{y \in K}$, and $\normR{\cdot}$ is a fixed norm on $\R^N$. If moreover $K$ is convex then every set $\set{x}{d(x,K) < 2^{-n}}$ is convex (and open). Hence every closed convex set is the countable intersection of convex open sets. For the converse direction we assume that $U$ is an open convex subset of $\R^N$, and we consider a sequence $(B_i)_{i \in \om}$ of bounded open balls with $U = \cup_i B_i = \cup_i \overline{B_i}$. Put $K_i = \cup _{j \leq i} \; \overline{B_j}$. Then the \emph{convex hull} $H(K_i)$ is compact (this follows from a well-known result of Caratheodory, see for example \cite[28.13]{kechris_classical_dst}) and is also a convex set, which is contained in $U$, since the latter is a convex set which contains $K_i$. Hence $U$ is the increasing union of convex  compact sets.

\subsection{The constructive proof.}

Here we give our constructive proof to the Preiss Separation Theorem. As in the Dyck separation the idea is to define a well-founded tree, which reflects the fact that one of our given analytic sets is convex.  

We recall that a \emph{Souslin scheme} (also called \emph{system}) on \ca{X} is a family $(P_u)_{u \in \om^{<\om}}$ of subsets of \ca{X}. The \emph{Souslin operation} $\ca{A}$ applies to Souslin schemes $(P_u)_{u \in \om^{<\om}}$ and produces the sets
\[
\ca{A}_u P_u = \set{x \in \ca{X}}{(\exists \alpha)(\forall n)[x \in P_{\alpha \upharpoonright n}]}.
\]
It is a well-known fact in descriptive set theory that a set is analytic exactly when it has the form $\ca{A}_uP_u$ for some Souslin system $(P_u)_{u \in \om^<\om}$ of closed sets. Furthermore the scheme can be assumed to be \emph{regular}, \ie for all $u \sqsubseteq v$ we have $P_v \subseteq P_u$, and \emph{of vanishing diameter}, \ie for some compatible metric on the underlying space $\ca{X}$ and for all $\alpha \in \baire$ we have $\ds \lim_{n \to \infty}$diam$_d(P_{\alpha \upharpoonright n}) = 0$, where diam$_d$ is the usual diameter with respect to the metric $d$ - see \cite[25.7]{kechris_classical_dst}.

There are however cases, where we need some different properties for our Souslin scheme, and more specifically in our case we need to have that $P_{\cn{u}{(m)}} \subseteq P_{\cn{u}{(m+1)}}$. (This allows us to take increasing unions.) This is possible to satisfy with a regular Souslin system of \emph{analytic} sets, see \cite[25.13]{kechris_classical_dst}, but it is not implicit that these sets can be chosen to be closed. (In general $\ca{A}_u \overline{P_u}$ might be bigger than $\ca{A}_u P_u$.)

This poses an obstacle towards giving a constructive proof. To explain this better, we will need to consider the convex hull $H(K)$ of a given set $K \subseteq \R^N$, and make sure that $H(K)$ is a closed set. The standard argument for this, is to utilize the Caratheodory Theorem, which implies that $H(K)$ is compact, whenever $K$ is compact as well. In the proof by contradiction (see \cite{preiss_the_convex_generation_of_Borel_sets} and \cite[28.15]{kechris_classical_dst}) the set $K$ has the form $\cap_{n \in \om} P_{\alpha \upharpoonright n}$, and it is possible to choose the Souslin scheme so that all sets of this form are compact. The preceding $\alpha$ is obtained by repeating infinitely many times our hypothesis towards the contradiction. However in any proof, which uses well-founded trees, it is natural to expect that we will have only finitely many $P_u$'s in hand at any given stage. Since the latter sets are analytic in general, it is not obvious how to obtain the needed property of compactness.

The idea to overcome this difficulty is to bring each of the preceding analytic sets $P_u$ in the from $\ca{A}_vP^u_v$ for some suitably chosen double Souslin scheme $(P^u_v)_{u,v}$ of closed sets, and then use this to produce a regular Souslin scheme $(Q_u)_{u}$ of closed sets, which reflects (a somewhat stronger version of) the increasing property $P_{\cn{u}{(m)}} \subseteq P_{\cn{u}{(m+1)}}$ that we want to have. To obtain compactness we will simply restrict $Q_u$ on the cubes $[-m,m]^N$, where $m \in \om$. 

\begin{definition}\normalfont
\label{definition good Souslin code}
Let $\ca{X}$ be a Polish space and $A$ be an analytic subset of $\ca{X}$. We say that $(Q_u)_{u \in \om^{< \om}}$ is a \emph{good Souslin scheme for $A$} if we have the following properties:
\renewcommand{\theindex}{\alph{index}}
\begin{list}{\tu{(}\alph{index}\tu{)}}{\usecounter{index}}
\item\label{souslin good set equals to A} $A = \ca{A}_u Q_u$;
\item \label{souslin good closed} Each $Q_u$ is closed;
\item \label{souslin good regular}The system $(Q_u)_{u}$ is regular, \ie we have $Q_v \subseteq Q_u$ for all $u \sqsubseteq v$;
\item \label{souslin good system increasing}For all finite sequences $u,v$ and all $i \leq j$ it holds $Q_{\cn{u}{\cn{(i)}{v}}} \subseteq Q_{\cn{u}{\cn{(j)}{v}}}$.
\end{list}
\renewcommand{\theindex}{\arabic{index}}

A given $\beta \in \baire$ is a \emph{good Souslin code} if the Souslin scheme $(Q_u)_{u}$ defined by
\[
Q_{\dec{s}} = \fcode^\ca{X}((\beta)_{s})
\]
is good for $\ca{A}_uQ_u$, where $s \in \Seq \mapsto \dec{s}$ is the decoding function and $\fcode^\ca{X}$ is the fixed universal system for $\boldp^0_1 \upharpoonright \ca{X}$ from the Introduction.\smallskip
\end{definition}

\begin{proposition}
\label{proposition every analytic set admits a good Souslin scheme}
Every analytic set in a Polish space admits a good Souslin scheme.
\end{proposition}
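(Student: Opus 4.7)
The plan is to start from a standard regular Souslin representation of $A$ by closed sets and then ``monotonize'' it by taking finite unions. Precisely, by \cite[25.7]{kechris_classical_dst} pick a regular Souslin scheme $(P_u)_{u \in \om^{<\om}}$ of closed subsets of $\ca{X}$ with $A = \ca{A}_u P_u$. For $u,v \in \om^{<\om}$ write $v \preceq u$ to mean $\lh(v) = \lh(u)$ and $v(i) \leq u(i)$ for every $i < \lh(u)$, and set
\[
Q_u = \bigcup \set{P_v}{v \preceq u}.
\]
Each $Q_u$ is a finite union of closed sets and hence closed, so (b) holds.

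Properties (c) and (d) are almost immediate from the definition. For (d), if $i \leq j$ and $w \preceq \cn{u}{\cn{(i)}{v}}$ then in particular $w(\lh(u)) \leq i \leq j$, while the remaining coordinates are bounded by the entries of $u$ and $v$; hence $w \preceq \cn{u}{\cn{(j)}{v}}$, and therefore $P_w$ already appears in the union defining $Q_{\cn{u}{\cn{(j)}{v}}}$. For (c), if $u \sqsubseteq u'$ and $v' \preceq u'$, then $v' \upharpoonright \lh(u) \preceq u$, and by the regularity of $(P_u)_u$ we get $P_{v'} \subseteq P_{v' \upharpoonright \lh(u)} \subseteq Q_u$; unioning over all such $v'$ gives $Q_{u'} \subseteq Q_u$.

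The substantive point is (a). The inclusion $A \subseteq \ca{A}_u Q_u$ is trivial because $P_u \subseteq Q_u$. For the converse, suppose $\alpha \in \baire$ and $x \in Q_{\alpha \upharpoonright n}$ for every $n$, and consider
\[
T_x = \set{v \in \om^{<\om}}{v \preceq \alpha \upharpoonright \lh(v) \ \text{and}\ x \in P_v}.
\]
If $v \in T_x$ and $v' \sqsubseteq v$, then $v'(i) = v(i) \leq \alpha(i)$ and, by regularity of $(P_u)_u$, $x \in P_v \subseteq P_{v'}$; so $v' \in T_x$ and $T_x$ is a tree. The assumption $x \in Q_{\alpha \upharpoonright n}$ yields at least one element of $T_x$ at each level $n$, while every level $T_x \cap \om^n$ is contained in the finite product $\prod_{i<n}\{0,1,\dots,\alpha(i)\}$. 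K\"onig's lemma now produces an infinite branch $\beta \in \baire$ with $x \in P_{\beta \upharpoonright n}$ for all $n$, whence $x \in \ca{A}_u P_u = A$. The main obstacle is exactly this compactness extraction: the monotonization permits a different $P$-witness at each level, and we need a single $\beta$ that works simultaneously at every level; this is precisely what K\"onig's lemma delivers, thanks to the bound by $\alpha$ built into the definition of $\preceq$.
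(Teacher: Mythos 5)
Your proof is correct, and it isolates the same key idea as the paper: form $Q_u$ as a finite union of $P_v$ over $v \preceq u$ (same length, coordinatewise $\leq$), which forces properties (c) and (d), and then recover $\ca{A}_u Q_u \subseteq A$ by a K\"onig's lemma argument, using that $\preceq$ bounds each level of the witness tree $T_x$ by the entries of $\alpha$, so $T_x$ is finitely branching. All the individual steps check out, including the verification of (c) and (d) and the tree-closure argument.

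Where you genuinely diverge from the paper is the starting point. You take as a black box the Kechris \cite[25.7]{kechris_classical_dst} fact that $A$ has a regular Souslin scheme of closed sets, and monotonize that. The paper instead constructs its closed scheme $Q_u = \bigcup_{c} P^u_c$ from scratch, out of a recursive tree of pairs $T$ representing $\pi^{-1}[A]$ for a recursive surjection $\pi : \baire \to \ca{X}$ together with a map $h : \om^{<\om} \to \ca{X}$ with $\pi(\alpha) = \lim_n h(\alpha\upharpoonright n)$; the sets $P^u_c$ are of the form ``$c \preceq u$, $(c^1,c^2) \in T$, and $p(x,h(c^1)) \leq 2^{-\lh(c)+1}$''. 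For the bare existence statement your route is shorter and cleaner. The reason the paper goes the longer way is that it needs the construction to be \emph{effective}: in Proposition \ref{proposition from analytic code to good Souslin code} the paper reruns this same construction uniformly in an analytic code $\alpha$ and reads off a recursive function $\strans^\ca{X}$ giving a good Souslin \emph{code}. Your argument, resting on Kechris 25.7 as a black box, does not directly yield a recursive passage from analytic codes to good Souslin codes (the closure operation implicit in 25.7 is not effective in general), so it would not replace the paper's proof in that subsequent application; but as a proof of Proposition \ref{proposition every analytic set admits a good Souslin scheme} itself it is perfectly good, and arguably more transparent.
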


\begin{proof}
Let \ca{X} be a Polish space and $A$ be an analytic subset of \ca{X}. The argument is an elaboration of the ones in \cite[25.7 and 25.13]{kechris_classical_dst} together with some minor variations in order to establish the effective arguments that we will need in the sequel.

Let $p$ be a compatible metric on $\ca{X}$. From \cite[1A.1]{yiannis_dst} there is a function $h: \om^{< \om} \to \ca{X}$ such that $p(h(u),h(\cn{u}{(i)})) < 2^{-\lh(u)}$ for all $u$, $i$ and the function $\pi: \baire \to \ca{X}$ defined by $\pi(\alpha) = \lim_{n \to \infty} h(\alpha \upharpoonright n)$ is a continuous surjection. In fact if $\ca{X}$ is recursive Polish, then $h$ and consequently $\pi$ can be chosen to be recursive.

We fix a recursive bijection $\pair{\cdot} : \om^2 \to \om$ with recursive inverse. Given a pair $(u,v)$ of a finite sequences of the same length $n$ we put (by a slight abuse of notation)
\[
\pair{u,v} = (\pair{u(0),v(0)},\dots,\pair{u(n-1),v(n-1)}).
\]
Clearly the latter is a finite sequence of length $n$.

Similarly given $(\alpha,\gamma) \in \baire \times \baire$ we put
\[
\pair{\alpha,\gamma} = (\pair{\alpha(0),\gamma(0)},\dots, \pair{\alpha(n),\gamma(n)},\dots).
\]
Since the function $\pair{\cdot}: \om^2 \to \om$ is bijective it is clear that for every $w \in \om^{<\om}$ there is a unique pair $(u,v) \in \om^{< \om}$ with $\lh(u) = \lh(v) = \lh(w)$ and $w = \pair{u,v}$. We denote the latter unique pair by $(w^1,w^2)$.

Similarly for all $\ep \in \baire$ there is a unique pair $(\ep^1,\ep^2) \in \baire \times \baire$ such that $\ep = \pair{\ep^1,\ep^2}$.

The set $B : = \pi^{-1}[A]$ is a $\sig$ subset of $\baire$, and so there is a recursive tree of pairs $T$ such that
\[
\pi(\alpha) \in A \iff \alpha \in B \iff (\exists \gamma)(\forall t)[(\alpha \upharpoonright t, \gamma \upharpoonright t) \in T].
\]
for all $\alpha \in \baire$.

We define the binary relation $\preceq$ on $\om^{< \om}$ by
\[
v \preceq u \ \iff \ \text{for all} \ i < \min \{\lh(u),\lh(v)\} \ \text{we have} \ v(i) \leq u(i),
\]
where $u, v \in \om^{<\om}$. Notice that for all $u$ there are infinitely many $v$'s for which $v \preceq u$, but there are only finitely many whose length is at most the length of $u$.

Given finite sequences $u$, $c$ not necessarily of the same length we define
\begin{align*}
x \in P^u_c \iff c \preceq u \quad \& \quad (c^1,c^2) \in T \quad \& \quad p(x,h(c^1)) \leq 2^{-\lh(c)+1}
\end{align*}
and
\[
Q_u = \bigcup \set{P^u_c}{c \preceq u\;, \ \lh(c) = \lh(u)} = \bigcup \set{P^u_c}{\lh(c) = \lh(u)}.\footnote{It can be proved that $A = \ca{A}_u \left(\ca{A}_c P^u_c\right)$ but we will not need to use this property directly.}
\]
We claim that $(Q_u)_{u \in \om^{< \om}}$ is a good Souslin scheme for $A$. It is clear that each $Q_u$ is closed as the finite union of closed sets.\smallskip

\emph{Claim 1.} For all $u,c,d \in \om^{< \om}$ with $c \sqsubseteq d$ we have $P^u_d \subseteq P^u_c$.\smallskip

\emph{Proof of the claim.} Without loss of generality we can assume that $d$ is an one-point extension of $c$, say $d = \cn{c}{(k)} = \cn{\pair{c^1,c^2}}{(\pair{k^1,k^2})}$. Clearly $d^1 = \cn{c^1}{(k^1)}$. Put $n = \lh(c)$ and suppose that $x$ is a member of $P^u_d$, \ie $d \preceq u$, $(d^1,d^2) \in T$ and $p(x,h(d^1)) < 2^{-\lh(d)+1} = 2^{-n}\;$. For all $i < \min\{\lh(c),\lh(u)\} \leq \min\{\lh(d),\lh(u)\}$ we have $c(i) = d(i) \leq u(i)$. Hence $c \preceq u$. Clearly $c^i \sqsubseteq d^i$, $i=1,2$, so $(c^1,c^2) \in T$. From the properties of $h$ we have
\begin{align*}
p(h(c^1),x)
\leq& \ p(h(c^1),h(\cn{c^1}{(k^1)})) + p(h(\cn{c^1}{(k^1)}),x)\\
<& \ 2^{-n} + p(h(d^1),x)\\
<& \ 2^{-n} + 2^{-n} = 2^{-n+1} = 2^{-\lh(c)+1}.
\end{align*}
Therefore $x \in P^u_c$.\smallskip

\emph{Claim 2.} For all $u,v,c \in \om^{< \om}$ with $u \sqsubseteq v$ we have $P^v_c \subseteq P^u_c$.\smallskip

\emph{Proof of the claim.} If $P^v_c$ is non-empty then $c \preceq v$ and so for all $i < \min\{\lh(c),\lh(u)\} \leq \min\{\lh(c),\lh(v)\}$ we have $c(i) \leq v(i) = u(i)$. It follows that $c \preceq u$ and from this it is immediate that $P^v_c \subseteq P^u_c$.\smallskip

By combining the preceding two claims we have for all $c \sqsubseteq d$ and all $u \sqsubseteq v$ that
\begin{align}\label{equation A proposition every analytic set admits a good Souslin scheme}
P^{v}_d \subseteq P^v_c \subseteq  P^u_c.
\end{align}

Now we can show the regularity of the Souslin scheme $(Q_u)_u$. Suppose that $u \sqsubseteq v$ and that $x \in Q_v$, \ie $x \in P^v_d$ for some $d \in \om^{< \om}$ with $d \preceq v$ and $\lh(d) = \lh(v)$. We then take the restriction $c: = d \upharpoonright \lh(u)$, so that $\lh(c) = \lh(u)$, $c \preceq u$, and hence $P^u_c \subseteq Q_u$. Using that $c \sqsubseteq d$ it follows from (\ref{equation A proposition every analytic set admits a good Souslin scheme}),
\[
x \in P^v_d \subseteq P^v_{c} \subseteq P^u_{c} \subseteq Q_u,
\]
\ie $Q_v \subseteq Q_u$.\smallskip

\emph{Claim 3.} It holds $P^{u}_c \subseteq P^{v}_c$ for all $c,u,v$ with $u \preceq v$ and $\lh(u) = \lh(v)$.\smallskip

\emph{Proof of the claim.} If $u \preceq v$ and $\lh(u) = \lh(v)$ then for all $c \preceq u$ and all $i < \min \{\lh(v),\lh(c)\} = \min \{\lh(u),\lh(c)\}$ it holds $c(i) \leq u(i) \leq v(i)$, and so $c \preceq v$. Hence $P^{u}_c \subseteq P^{v}_c$.\smallskip

Now we can show the property (\ref{souslin good system increasing}). Let $i \leq j$ and $u, v \in \om^{<\om}$. Then $\cn{u}{\cn{(i)}{v}} \preceq \cn{u}{\cn{(j)}{v}}$. Since the latter two sequences have the same length it follows from Claim 3 that $P^{\cn{u}{\cn{(i)}{v}}}_c \subseteq P^{\cn{u}{\cn{(j)}{v}}}_c$ for all $c$. Therefore
\begin{align*}
Q_{\cn{u}{\cn{(i)}{v}}} 
=& \ \bigcup \set{P^{\cn{u}{\cn{(i)}{v}}}_c}{\lh(c) = \lh(u) + \lh(v) + 1}\\
\subseteq& \ \bigcup \set{P^{\cn{u}{\cn{(j)}{v}}}_c}{\lh(c) = \lh(u) + \lh(v) + 1}\\
=& \ Q_{\cn{u}{\cn{(j)}{v}}}.
\end{align*}

It remains to show that $A = \ca{A}_u Q_u$. Suppose that $x$ is a member of $A$; using the surjectiveness of $\pi$ there is some $\gamma = \pair{\gamma^1,\gamma^2}$ such that $(\gamma^1,\gamma^2) \in [T]$ and $x = \pi(\gamma^1)$. It follows from the properties of $h$ that $p(x,h(\gamma^1 \upharpoonright n)) \leq 2^{-n+1}$ for all \n. 

Fix some $\n$ and put $c_n = u_n= \gamma \upharpoonright n$. Since $c_n^i = \gamma^i \upharpoonright n$, $i=1,2$, it follows that $(c^1_n, c^2_n) \in T$. Moreover $c_n \preceq u_n$, $\lh(c_n) = \lh(u_n) = n$, and also $p(x,h(c^1_n)) = p(x,h(\gamma^1 \upharpoonright n)) \leq 2^{-n+1}$. This means that $x \in P^{u_n}_{c_n} \subseteq Q_{u_n} = Q_{\gamma \upharpoonright n}$. Thus $x \in \cap_n Q_{\gamma \upharpoonright n} \subseteq \ca{A}_u Q_u$.

This settles the left-to-right inclusion. For the converse inclusion suppose that $x$ is a member of $\ca{A}_u Q_u$. Consider some $\alpha$ such that $x \in \cap_n Q_{\alpha \upharpoonright n}$. Thus for all $n$ there is some $c_n \preceq \alpha \upharpoonright n$ such that $x \in P^{\alpha \upharpoonright n}_{c_n}$ and $\lh(c_n) = \lh(\alpha \upharpoonright n) = n$. In particular $x \in \cap_n P^{\alpha \upharpoonright n}_{c_n}$. 

Define
\[
S(\alpha,x) = \set{c \in \om^{<\om}}{x \in P^{\alpha \upharpoonright \lh(c)}_c}.
\]
Clearly $c_n \in S(\alpha,x)$ for all $\n$, and so $S(\alpha,x)$ is non-empty, in fact it is an infinite set since $\lh(c_n) = n$.

If $d \in S(\alpha,x)$ and $c \sqsubseteq d$ it follows from (\ref{equation A proposition every analytic set admits a good Souslin scheme}) that
\[
x \in P^{\alpha \upharpoonright \lh(d)}_d \subseteq P^{\alpha \upharpoonright \lh(d)}_c \subseteq P^{\alpha \upharpoonright \lh(c)}_c.
\]
Hence $c \in S(\alpha,x)$, and so the latter is a tree. Since for all $\cn{c}{(k)} \in S(\alpha,x)$ it holds $k \leq \alpha(\lh(c))$, we have that $S(\alpha,x)$ is of finite branching. So by an application of K\"{o}nig's Lemma there is some $\gamma \in [S(\alpha,x)]$. In other words $x \in P^{\alpha \upharpoonright n}_{\gamma \upharpoonright n}$ for all $n$. In particular $(\gamma^1 \upharpoonright n, \gamma^2 \upharpoonright n) \in T$ for all $n$, and so $\pi(\gamma^1) \in A$. Moreover $p(x,h(\gamma^1 \upharpoonright n)) \leq 2^{-n+1}$ and so $x = \lim_{n \to \infty} h(\gamma^1 \upharpoonright n) = \pi(\gamma^1)$. Therefore $x = \pi(\gamma^1) \in A$.

So we showed that $A = \ca{A}_u Q_u$ and we are done.
\end{proof}\smallskip

Now we are ready to give our constructive \emph{proof to Theorem \ref{theorem preiss separation}.}\smallskip

We fix the distance function $p_\infty = ((x,y) \mapsto \normR{x-y}_{\infty})$ on $\R^N$, a recursive surjection $\pi: \baire \surj \R^N$ and a recursive function $h: \om^{< \om} \to \R^N$ such that $\normR{h(\alpha \upharpoonright (n+1)) - \pi(\alpha)}_{\infty} \leq 2^{-n+1}$, see \cite[1A.1]{yiannis_dst}.

Since $\pi^{-1}[B]$ is a $\sig$ subset of $\baire$ there is a recursive tree of pairs $S$ such that for all $\beta$,
\[
\beta \in \pi^{-1}[B] \iff (\exists \delta)(\forall t)[(\beta \upharpoonright t, \delta \upharpoonright n) \in S].
\]
Notice that from the surjectiveness of $\pi$ we have $(\pi \circ \pr)[[S]] = B$.

Suppose that $(Q_u)_{u \in \om^{<\om}}$ is a good Souslin scheme for $A$ (Proposition \ref{proposition every analytic set admits a good Souslin scheme}).

We take the sets
\begin{align*}
A_{u} = \ca{A}_v Q_{\cn{u}{v}}, \quad B_{(b,d)}= (\pi \circ \pr)[[S_{(b,d)}]],
\end{align*}
where $u, b, d \in \om^{< \om}$ with $\lh(b) = \lh(d)$. Notice that $B_{\emptyset} = (\pi \circ \pr)[[S]] = B$.\smallskip

\emph{Claim 1}. For all $u$, $(b,d)$, and $i \leq j$ it holds
\begin{align*}
B_{(b,d)} =& \ \cup_{(k,l) \in \om^2} B_{\cn{(b,d)}{(k,l)}}\;,\\
A_{u} =& \ \cup_{m \in \om} A_{\cn{u}{(m)}}\;,\\
A_{\cn{u}{(i)}} \subseteq& \ A_{\cn{u}{(j)}}\;.
\end{align*}

\emph{Proof of the claim.} 
The first two assertions are proved by direct computations. For all $x \in \R^N$ we have
\begin{align*}
x \in B_{(b,d)} 
\iff& \ (\exists \beta, \delta)[(\beta,\delta) \in [S_{(b,d)}] \ \& \ \pi(\beta) = x]\\
\iff& \ (\exists k,l)(\exists \beta,\delta)[(\beta,\delta) \in [S_{\cn{(b,d)}{(k,l)}}] \ \& \ \pi(\beta) = x]\\
\iff& \ (\exists k,l)[x \in B_{\cn{(b,d)}{(k,l)}}];
\end{align*}
and
\begin{align*}
x \in A_u 
\iff& \ (\exists \gamma)(\forall n)[x \in Q_{\cn{u}{\gamma \upharpoonright n}}]\\
\iff& \ (\exists m)(\exists \gamma^\ast)(\forall n)[x \in Q_{\cn{u}{\cn{(m)}{\gamma^\ast \upharpoonright n}}}]\\
\iff& \ (\exists m)[x \in \ca{A}_v Q_{\cn{u}{\cn{(m)}{v}}}]\\
\iff& \ (\exists m)[x \in A_{\cn{u}{(m)}}].
\end{align*}
For the inclusion we have from the property (\ref{souslin good system increasing}) of the definition of a good Souslin scheme that $Q_{\cn{u}{\cn{(i)}{v}}} 
\subseteq Q_{\cn{u}{\cn{(j)}{v}}}\;$ for all $v \in \om^{< \om}$,
from which it follows
\[
A_{\cn{u}{(i)}} = \ca{A}_v Q_{\cn{u}{\cn{(i)}{v}}} \subseteq \ca{A}_v Q_{\cn{u}{\cn{(j)}{v}}} = A_{\cn{u}{(j)}}.
\]
This finishes the proof of the claim.\smallskip

\emph{Definition of $J$.} Next we define the tree $J$ as the direct sum of trees of tuples as follows: given $m \in \om$ and $b,d,u \in \om^{< \om} $ of the same length $n \in \om$ put
\begin{align*}
(m,b,d,u) \in J
\iff& \ \hspace*{5mm} Q_{u} \cap [-m,m]^N \neq \emptyset\\
& \ \hspace*{2mm}\& \ (b,d) \in S\\
& \ \hspace*{2mm}\& \ (\forall s \leq n)[p_\infty\left(h(b \upharpoonright s),H\left(Q_{u \upharpoonright s} \cap [-m,m]^N\right)\right) < 2^{-s+3}]\;, 
\end{align*}
where $H(P)$ is the convex hull of $P$. In the preceding definition we allow $b = d = u= \emptyset$, so that $(m) \in J$ for all $m \in \om$; we also include the empty sequence in $J$ as well. Using that $Q_v \subseteq Q_u$ for $u \sqsubseteq v$ it is immediate that $J$ is indeed a tree.\smallskip

\smallskip

\emph{Claim 2.} The tree $J$ is well-founded.\smallskip

\emph{Proof of the claim.} Suppose towards a contradiction that there is an infinite branch $(m,\beta,\delta,\ep) \in [J]$. Then in particular $(\beta,\gamma) \in [S]$, so that $\pi(\beta) \in B$.

For all $n$ there is some $y_n \in H(Q_{\ep \upharpoonright n} \cap [-m,m]^N)$ such that $\normR{h(\beta \upharpoonright n) - y_n}_{\infty} < 2^{-n+3}$. It follows that $\pi(\beta) = \lim_n h(\beta \upharpoonright n) = \lim_n y_n$.

For all $n$, from the Caratheodory Theorem there are $\lambda^n_k \in [0,1]$ and $z^n_k \in Q_{\ep \upharpoonright n} \cap [-m,m]^N$, $k=1,\dots,N+1$, with $\sum_{k=1}^{N+1}\lambda^n_k =1$ and $y_n = \sum_{k=1}^{N+1}\lambda^n_k \cdot z^n_k$. Since the sets $[-m,m]^N$ and $[0,1]$ are compact, there is a strictly increasing sequence of naturals $(i_n)_{\n}$ and there are points $z_1,\dots,z_{N+1} \in [-m,m]^N$, $\lambda_1, \dots, \lambda_{N+1} \in [0,1]$ such that
\begin{align*}
z^{i_n}_k \to z_k, \quad \lambda^{i_n}_k \to \lambda_k,
\end{align*} 
for all $k=1,\dots,N+1$. Hence $y_{i_n} =  \sum_{k=1}^{N+1} \lambda^{i_n}_k \cdot z^{i_n}_k \to \sum_{k=1}^{N+1} \lambda_k \cdot z_k$, and so $\pi(\beta) =  \lim_{n} y_{i_n} = \sum_{k=1}^{N+1} \lambda_k \cdot z_k$. It is also clear that $\sum_{k=1}^{N+1} \lambda_k = \lim_{n \to \infty} \sum_{k=1}^{N+1} \lambda^{i_n}_k = 1$.

Given $k \in \{1,2\dots,N+1\}$ using the regularity of $(Q_u)_u$ we have that $z^{i_n}_k \in Q_{\ep \upharpoonright i_n} \subseteq Q_{\ep \upharpoonright t}$ for all $n \geq t$. Since the $Q_u$'s are closed sets it follows that $z_k = \lim_{n \to \infty} z^{i_n}_k \in Q_{\ep \upharpoonright t}$ for all $t$. Hence $z_k \in \cap_t Q_{\ep \upharpoonright t} \subseteq \ca{A}_u Q_u = A$. In other words all $z_k$'s are members of $A$.

Thus $\pi(\beta) =  \sum_{k=1}^{N+1} \lambda_k \cdot z_k$ is the convex combination of elements of $A$, and since the latter  is a convex set, it follows that $\pi(\beta) \in A$, contradicting that the sets $A$ and $B$ are disjoint. This completes the proof that $J$ is well-founded.\bigskip

In the sequel we need to consider the restrictions on the cubes $[-m,m]^N$,
\[
A_u^m := A_u = \cap [-m,m]^N\;, \quad Q_u^m: = Q_u \cap [-m,m]^N. 
\]

We will define a family $(C_\sigma)_{\sigma \in J \setminus \{\emptyset\}}$ of subsets of $\R^N$ such that for all non-empty $\sigma \in J$ of the form $(m,b,d,u)$ (with possibly $b =d = u = \emptyset$) it holds:\smallskip

(a) $C_\sigma$ is convexly generated;\smallskip

(b) $C_\sigma$ separates $A^m_u$ from $B_{(b,d)}$.\bigskip

If we do this we then define
\[
C_\emptyset = \cup_{m \in \om} \cap_{s \geq m} C_{(s)}.
\]
Using (a) we can see that $C_\emptyset$ is the countable increasing union of convexly generated sets, and so it is convexly generated as well. Moreover by applying (b) to $\sigma = (m)$ we have that $C_{(m)}$ separates $A^m_{\emptyset}$ from $B_\emptyset = B$. Since $A^m_\emptyset \subseteq A^{m+1}_\emptyset$ it follows that the set $\cap_{s \geq m} C_{(s)}$ also separates $A^m_\emptyset$ from $B$ for all $m \in \om$, and also that the union $\cup_{m \in \om} \cap_{s \geq m} C_{(s)} = C_\emptyset$ separates $\cup_{m \in \om} A^m_\emptyset = A_\emptyset = A$ from $B$. Thus it suffices to define $(C_\sigma)_{\sigma \in J \setminus \{\emptyset\}}$ as above.\bigskip

\emph{Claim 3.} Suppose that $\sigma =(m,b,d,u)$ is a member of $J$ and that $(D^\sigma_{(k,l,i)})_{k,l,i \in \om}\;$ is a family of sets in $\R^N$ such that for all $k,l,i \in \om$:
\begin{list}{(\alph{index}*)}{\usecounter{index}}
\item $D^\sigma_{(k,l,i)}$ is convexly generated;

\item $D^\sigma_{(k,l,i)}$ separates $A^m_{\cn{u}{(i)}}$ from $B_{\cn{(b,d)}{(k,l)}}$.
\end{list}

Put
\begin{align*}
C^j_\sigma :=& \ \cap_{i \geq j} \cap_{(k,l) \in \om^2} D^\sigma_{(k,l,i)}\\
C_\sigma : =& \ \cup_{j \in \om} C^j_\sigma.
\end{align*}

Then the set $C_\sigma$ satisfies the preceding properties (a) and (b).\smallskip

\emph{Proof of the claim.}
Notice that for all $j$ the set $C^j_\sigma$ is convexly generated and $C^j_\sigma \subseteq C^{j+1}_\sigma$. So the union $\cup_{j \in \om} C^j_\sigma = C_\sigma$ is convexly generated as the countable increasing union of convexly generated sets.

Now we show that $C_\sigma$ separates $A^m_{u}$ from $B_{(b,d)}$. Suppose that $x \in A^m_u$. From Claim 1 there  is some $j \in \om$ such that $x \in A^m_{\cn{u}{(j)}}$, and moreover $x \in A^m_{\cn{u}{(i)}}$ for all $i \geq j$. Using (b*) above it follows that $x \in D^{\sigma}_{(k,l,i)}$ for all $i \geq j$ and all $k,l$. In other words $x \in C^j_\sigma \subseteq C_\sigma$. If there were some $x \in C_\sigma \cap B_{(b,d)}$, then by using again Claim 1 there would be naturals $j,k,l$ such that $x \in C^j_\sigma \cap B_{\cn{(b,d)}{(k,l)}} \subseteq D^\sigma_{(k,l,j)} \cap B_{\cn{(b,d)}{(k,l)}}$, contradicting (b*). Hence $C_\sigma \cap B_{(b,d)} = \emptyset$.\smallskip

It remains to define a family $(D^\sigma_{(k,l,j)})_{k,l,j \in \om, \sigma \in J \setminus \{\emptyset\}}\;$, which satisfies the properties (a*) and (b*) above. 

We do this by bar recursion on $J$. Let $\sigma = (m,b,d,u) \in J$ and let $k,l,j,n \in \om$ with $n =\lh(b) = \lh(d) = \lh(u)$.\smallskip

\emph{Case 1.} We have $\cn{\sigma}{(k,l,j)} \not \in J$. We consider the following sub-cases.\smallskip

\emph{Case 1a.} It holds $Q^m_{\cn{u}{(j)}} = \emptyset$. In this case, since $A_w \subseteq Q_w$ for all $w$ (from the regularity of $(Q_v)_v$), we have in particular that $A^m_{\cn{u}{(j)}} = \emptyset$. So we take $D^\sigma_{(k,l,j)} = \emptyset$, which is clearly convexly generated and separates $A^m_{\cn{u}{(j)}}$ from $B_{\cn{(b,d)}{(k,l)}}$.\smallskip

\emph{Case 1b.} It holds $Q^m_{\cn{u}{(j)}} \neq \emptyset$ and $\cn{(b,d)}{(k,l)} \not \in S$. Then $B_{\cn{(b,d)}{(k,l)}} = (\pi \circ \pr)[[S_{\cn{(b,d)}{(k,l)}}]] = \emptyset$. Therefore we take $D^\sigma_{(k,l,j)} = \R^N$. This is a convexly generated set and separates $A^m_{\cn{u}{(j)}}$ from $B_{\cn{(b,d)}{(k,l)}}$.\smallskip

\emph{Case 1c.} It holds
\[
Q^m_{\cn{u}{(j)}} \neq \emptyset \quad \text{and} \quad \cn{(b,d)}{(k,l)} \in S\;.
\]
From this and our hypothesis that $\sigma = (m,b,d,u) \in J$ it follows that
\[
p_\infty\left(h(\cn{b}{(k)}),H\left(Q^m_{\cn{u}{(j)}}\right)\right) \geq 2^{-n+2} = 2^{-\lh(\cn{u}{(j)}) + 3}\;.
\]
We then take
\begin{align}\label{equation definition of Dsigma}
D^\sigma_{(k,l,j)} = \set{x \in \R^N}{p_\infty\left(x,H(Q^m_{\cn{u}{(j)}})\right) < 2^{-n}}
\end{align}
It is clear that $D^\sigma_{(k,l,j)}$ is an open subset of $\R^N$ and it is also easy to see that $D^\sigma_{(k,l,j)}$ is convex. It is thus convexly generated.\smallskip

We show that the latter set separates $A^m_{\cn{u}{(j)}}$ from $B_{\cn{(b,d)}{(k,l)}}$. Since $A_{\cn{u}{(j)}} \subseteq Q_{\cn{u}{(j)}}$ it follows that $p_\infty\left(x,H(Q^m_{\cn{u}{(j)}})\right) = 0 < 2^{-n}$ for all $x \in A^m_{\cn{u}{(j)}}$. Hence $A_{\cn{u}{(j)}} \subseteq D^\sigma_{(k,l,j)}$. Now assume that $x$ is a member of $B_{\cn{(b,d)}{(k,l)}}$. Then there is some $\beta \in \baire$ with $\cn{b}{(k)}\sqsubseteq \beta$ and $x = \pi(\beta)$. We observe that 
\[
p_{\infty}(\pi(\beta),h(\cn{b}{(k)})) = p_{\infty}(\pi(\beta),h(\beta \upharpoonright (n+1)))\leq 2^{-n+1},
\]
and so
\[
p_\infty\left(x,H(Q^m_{\cn{u}{(j)}})\right) \geq p_\infty\left(h(\cn{b}{(k)}),H(Q^m_{\cn{u}{(j)})}\right) - p_\infty(h(\cn{b}{(k)}),x) \geq 2^{-n+2} - 2^{-n+1} > 2^{-n}.
\]
Therefore $x \not \in D^\sigma_{(k,l,j)}$, \ie $D^\sigma_{(k,l,j)} \cap B_{\cn{(b,d)}{(k,l)}} = \emptyset$.\smallskip

\emph{Case 2.} We have $\cn{\sigma}{(k,l,j)} \in J$. From our inductive hypothesis the family $(D^{\cn{\sigma}{(k,l,j)}}_{(k_1,l_1,j_1)})_{k_1,l_1,j_1 \in \om}$ has been defined and satisfies the properties (a*) and (b*) above. Then we take
\begin{align*}
D^{\sigma}_{(k,l,j)} = \cup_{j_1} \cap_{i_1 \geq j_1} \cap_{k_1,l_1} \; D^{\cn{\sigma}{(k,l,j)}}_{(j_1,k_1,l_1)} \;.
\end{align*}
In other words $D^{\sigma}_{(k,l,j)} = C_{\cn{\sigma}{(k,l,j)}}$ in the notation of Claim 3. From the latter and our inductive hypothesis the set $C_{\cn{\sigma}{(k,l,j)}}$ is convexly generated and separates $A^m_{\cn{u}{(j)}}$ from $B_{\cn{(b,d)}{(k,l)}}$. This completes the construction.

\subsection{From analytic codes to Souslin codes uniformly.}

The preceding proof shows how to obtain a uniformity function in the Preiss Separation Theorem under the following assumptions.
\begin{list}{$\bullet$}{} 
\item The convex set $A$ is encoded in terms of a good Souslin scheme $(Q_u)_{u}$ for $A$.

\item We have a code for the clauses of the conjunctions defining the tree $J$ - this settles the complexity of $J$ and of the case distinction.

\item We have a way for obtaining codes of sets of the form $\set{x \in \R^N}{d(x, H(Q_u \cap [-m,m]^N)) <  2^{-n}}$ out of code for $(Q_u)_{u}$.
\end{list}

We will see that all of preceding assumptions can be derived in a $\del$-way from any analytic code for the convex set $A$. Here we deal with the first one - the other two assumptions are settled in \ref{subsection codes for some open convex sets} and \ref{subsection convexly generated codes and the proof to the uniform Preiss Separation Theorem}.

\begin{proposition}
\label{proposition from analytic code to good Souslin code}
Suppose that $\ca{X}$ is a recursive Polish space. Then every $\sig(\alpha)$ set $A \subseteq \ca{X}$ has a good Souslin code $\beta \in \baire$, which is recursive in $\alpha$.\smallskip

In fact the preceding assertion holds $\Sigma^0_1$-uniformly, \ie there is a recursive function $\strans^\ca{X}: \baire \to \baire$ such that for all $\alpha \in \baire$ the $\alpha$-recursive point $\strans^\ca{X}(\alpha)$ is a good Souslin code for the $\sig(\alpha)$ set $\scode^\ca{X}_1(\alpha)$.
\end{proposition}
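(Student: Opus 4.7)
My plan is to effectivize the construction in the proof of Proposition \ref{proposition every analytic set admits a good Souslin scheme}, carrying the parameter $\alpha$ throughout. Inspection of that proof reveals that, once one fixes a recursive surjection $\pi:\baire\surj\ca{X}$ together with the auxiliary function $h$, the entire scheme $(Q_u)_{u}$ is computed in a primitive-recursive manner from a single ingredient, namely a tree $T$ of pairs with $\pi^{-1}[A] = \pr[T]$. The whole task therefore reduces to producing, uniformly in $\alpha$, a tree $T^\alpha$ of pairs satisfying $\pi^{-1}[\scode^\ca{X}_1(\alpha)] = \pr[T^\alpha]$ whose membership relation is recursive in $\alpha$.

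To construct $T^\alpha$, I would unfold $\scode^\ca{X}_1(\alpha,x)\iff(\exists\gamma)\,\fcode^{\ca{X}\times\baire}(\alpha,x,\gamma)$ and pull back along $\pi$. The complement of $\fcode^{\ca{X}\times\baire}(\alpha)$ is enumerated by $m \mapsto N(\ca{X}\times\baire,\alpha(m))$, so the pulled-back open set $\set{(\beta,\gamma)}{(\exists m)[(\pi(\beta),\gamma)\in N(\ca{X}\times\baire,\alpha(m))]}$ can be written as a recursive-in-$\alpha$ union of basic open rectangles of $\baire\times\baire$, using the recursiveness of $\pi$ and the product-neighbourhood function $\fprod$ from (\ref{equation product of neighborhoods}). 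Declaring $(u,v)\in T^\alpha$ precisely when the basic rectangle determined by $(u,v)$ is not contained in any of the first $\lh(u)$ rectangles of this enumeration yields a tree of pairs whose characteristic function is recursive in $\alpha$ and whose body is exactly $\set{(\beta,\gamma)}{\fcode^{\ca{X}\times\baire}(\alpha,\pi(\beta),\gamma)}$, as required. This step is a routine but careful application of the goodness of the universal system $\fcode$ combined with the $S$-$m$-$n$ theorem.

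Next I would copy the definitions of $P^u_c$ and $Q_u$ from the proof of Proposition \ref{proposition every analytic set admits a good Souslin scheme} verbatim, replacing $T$ by $T^\alpha$. Properties (\ref{souslin good set equals to A})--(\ref{souslin good system increasing}) of Definition \ref{definition good Souslin code} were verified there for any tree $T$ with the appropriate projection, and so they transfer without any change. The new observation is purely a matter of effectivity: each non-empty $P^u_c$ is the closed $p_\ca{X}$-ball of recursive radius $2^{-\lh(c)+1}$ around $h(c^1)$, and $Q_u$ is the finite union of the $P^u_c$ with $c\preceq u$ and $\lh(c)=\lh(u)$. Since the set of such $c$ is enumerated primitive recursively in $u$, and the predicate "$(c^1,c^2)\in T^\alpha$" is recursive in $\alpha$, the complement of $Q_u$ in $\ca{X}$ is presented as a recursive-in-$\alpha$ union of basic open neighbourhoods of $\ca{X}$, uniformly in $u$; this gives an open code for $\ca{X}\setminus Q_u$, i.e.\ a closed code for $Q_u$, computed recursively from $(\alpha,u)$.

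The final step is packaging. Using the $S$-$m$-$n$ property of the chosen universal systems, I would produce a recursive function $\strans^\ca{X}:\baire\to\baire$ such that for every $\alpha$ and every $s\in\om$ the point $(\strans^\ca{X}(\alpha))_s$ is the closed code for $Q_{\dec{s}}$ obtained above (with a trivial default when $s\notin\Seq$). Then $\strans^\ca{X}(\alpha)$ is by construction a good Souslin code for $\scode^\ca{X}_1(\alpha)$, settling both assertions of the proposition simultaneously. The main obstacle I anticipate is the bookkeeping in the second step: one must verify that the extraction of $T^\alpha$ from $\alpha$ is produced by a \emph{single} recursive process, rather than being merely $\alpha$-recursive for each separate $\alpha$; everything downstream is a direct recursive manipulation of closed codes that I expect to write out without difficulty.
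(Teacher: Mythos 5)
Your proposal is correct and follows essentially the same route as the paper's own proof: fix the recursive surjection $f$ (your $\pi$) and the auxiliary function $h$, obtain a tree of pairs recursive in $\alpha$ whose projection is $f^{-1}[\scode^\ca{X}_1(\alpha)]$ (the paper obtains it as a section $\tilde T(\alpha)$ of a single recursive tree of triples coming from the $\sig$ representation of $(\beta,\delta)\mapsto \scode^\ca{X}_1(\beta,f(\delta))$, whereas you build it directly by enumerating the pulled-back complement of $\fcode^{\ca{X}\times\baire}(\alpha)$ — these amount to the same thing), reuse the $P^u_c$ and $Q_u$ definitions from Proposition \ref{proposition every analytic set admits a good Souslin scheme}, note that the resulting family is uniformly $\Pi^0_1$ since each $Q_u$ is a finite union of closed balls, and finally package the sections into $\strans^\ca{X}$ via the goodness ($S$-$m$-$n$) of the universal system $\fcode$. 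No gap; the point you flag as needing care (that the passage $\alpha \mapsto T^\alpha$ is a single recursive process) is exactly what the paper also must verify, and it is handled the same way.
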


\begin{proof}
We go back to the proof of Proposition \ref{proposition from analytic code to good Souslin code}, and we show that the $Q_u$'s can be obtained uniformly in the code of $A$.

As before let $p$ be a compatible metric on $\ca{X}$, and $h: \om^{< \om} \to \ca{X}$ be a recursive function such that $p(h(u),h(\cn{u}{(i)})) \leq 2^{-\lh(u)}$ for all $u$, $i$, and the function $f: \baire \to \ca{X}$ defined by $f(\alpha) = \lim_{n \to \infty} h(\alpha \upharpoonright n)$ is a recursive surjection.

We also consider the recursive isomorphism $\pair{\cdot} : \om^2 \to \om$ and the relation $\preceq$, which is  clearly recursive as well.

We take the universal set $\scode_1^{\ca{X}} \subseteq \baire \times \ca{X}$ for $\bolds^1_1 \upharpoonright \ca{X}$ and we define
\[
\tilde{B}(\beta,\alpha) \iff \scode_1^{\ca{X}}(\beta,f(\alpha)).
\]
Clearly $\tilde{B}$ is a $\sig$ subset of $\baire \times \baire$, so there is a tree of triples $\tilde{T}$ such that
\[
\tilde{B}(\beta,\alpha) \iff (\exists \gamma)(\forall t)[(\beta \upharpoonright t, \alpha \upharpoonright t, \gamma \upharpoonright t) \in \tilde{T}].
\]
We denote by $\tilde{T}(\beta)$ the tree of pairs $\set{(u,v)}{(\beta \upharpoonright \lh(u), u,v) \in \tilde{T}}$, so that 
\[
f(\alpha) \in \scode_1^{\ca{X}}(\beta)\iff \alpha \in \tilde{B}(\alpha) \iff (\exists \gamma)(\forall t)[(\alpha \upharpoonright t, \gamma \upharpoonright t) \in \tilde{T}(\beta)].
\]
In other words we replace $A$ and $T$ from above with $\scode_1^{\ca{X}}(\beta)$ and $\tilde{T}(\beta)$ respectively.

We define
\begin{align*}
\tilde{P}(\beta,t,s,x) 
\iff& \dec{s} \preceq \dec{t} \ \& \ (\dec{s}^1,\dec{s}^2) \in \tilde{T}(\beta) \ \& \ p(x,h(\dec{s}^1)) \leq 2^{-\lh(\dec{s})+1}\;;\\
R(s,t) \iff& \ \dec{t} \preceq \dec{s} \ \& \ \lh(\dec{t}) = \lh(\dec{s}).
\end{align*}
Clearly $R$ is recursive and moreover there is a recursive function $u(s)$ such that for all $t,s$ if $R(s,t)$ holds then $t \leq u(s)$. 

Now we take the set $\tilde{Q}$ defined by
\[
\tilde{Q}(\beta, s, x) \iff (\exists t \leq u(s))[R(s,t) \ \& \ \tilde{P}(\beta,t,s,x)].
\]
We denote the $(\beta,s)$-section of $\tilde{Q}$ by $Q_{\dec{s}}(\beta)$. It is now clear from the proof of Proposition \ref{proposition from analytic code to good Souslin code} that the Souslin  scheme $(Q_{\dec{s}}(\beta))_{s \in \Seq}$ is a good Souslin scheme for the analytic set $\scode^{\ca{X}}_1(\beta)$.

It remains to extract a code for $(Q_{\dec{s}}(\beta))_{s}$ uniformly on $\beta$. Clearly $\tilde{Q}$ is $\Pi^0_1$, so there is some recursive $\ep \in \baire$ such that $\tilde{Q} = \fcode^{\baire \times \om \times \ca{X}}(\ep)$. Then for some recursive function $S$ we have that
\[
(\beta,s,x) \in \tilde{Q} \iff \fcode^{\om \times \ca{X}}(\ep,\beta,s,x) \iff \fcode^\ca{X}(S(\ep,\beta,s),x)
\]
for all $\beta,s,x$. Therefore $S(\ep,\beta,s)$ is a closed code for the $(\beta,s)$-section of $\tilde{Q}$, which is exactly $Q_{\dec{s}}(\beta)$.

Finally we define $\strans^\ca{X}: \baire \to \baire$ by
\[
\strans^\ca{X}(\beta)(\langle s, j \rangle) = S(\ep,\beta,s)(j)
\]
and $\strans^\ca{X}(k) = 0$ if $k$ does not have the form $\langle s,j \rangle$, so that $(\strans^\ca{X}(\beta))_{s} = S(\ep,\beta,s)$ for all $s$, \ie $(\strans^\ca{X}(\beta))_{s}$ is a closed code for $\tilde{Q}_{\dec{s}}(\beta)$. Clearly $\strans^{\ca{X}}$ is recursive. 
\end{proof}

Although not necessary for the uniform Preiss Separation Theorem, it is natural to ask if the analogous result can be given for a somewhat more standard class of Souslin schemes.

\begin{definition}\normalfont
\label{definition regular closed-Souslin code}
Suppose that $\ca{X}$ is a Polish space and that $A$ is an analytic subset of \ca{X}. We say that a Souslin scheme $(P_u)_{u \in \om^{< \om}}$ of subsets of \ca{X} is \emph{normal for $A$} if $A = \ca{A}_u P_u$, each $P_u$ is closed, and $(P_u)$ is regular and of vanishing diameter.  

A \emph{normal Souslin code} is some $\alpha \in \baire$, for which the Souslin scheme $(P_{\dec{s}})_{s \in \Seq} = (\fcode^{\ca{X}}((\alpha)_s))_{s \in \Seq}$ is normal for $\ca{A}_u P_u$.
\end{definition}

As it well-known every analytic subset of a Polish space admits a normal Souslin scheme \cite[25.7]{kechris_classical_dst}. This is fairly easy to do: given an analytic set $A$ we consider a continuous surjection $f: \baire \surj A$ and we take $P_u$ to be the closure of $f[\set{\alpha \in \baire}{u \sqsubseteq \alpha}]$, where $u \in \om^{< \om}$. This argument does not effectivize for the following reasons: (a) the preceding function $f$ cannot be chosen in general to be recursive even if $A$ is a $\Pi^0_1$ set, and (b) the closure of a $\sig$ or even $\del$ set is not necessarily \del, and thus it is not clear how to obtain a closed code in a $\del$ way.  

To settle (a) we use trees of pairs as in the proof of Proposition \ref{proposition every analytic set admits a good Souslin scheme}. Regarding (b) we give a condition for the space \ca{X}, which is satisfied for $\ca{X} = \R^N$ and under which the transition in the codes can be done effectively modulo some \del \ parameter.

\begin{proposition}
\label{proposition transition to normal Souslin codes}
Suppose that \ca{X} is a recursive Polish space and that there is a recursive surjection $f: \baire \surj \ca{X}$ with the property that the relation $Q \subseteq \om \times \ca{X}$ defined by
\[
Q(t,x) \iff x \in \overline{f[N(\baire,t)]}
\]
is $\del$.\smallskip

Then there is some $\ep \in \del$ and a $\Sigma^0_1(\ep)$-recursive function $\strans^\ca{X}_\ast: \baire \to \baire$ such that for all $\alpha \in \baire$ the $\del(\alpha)$-point $\strans^\ca{X}_\ast(\alpha)$ is a normal Souslin code for the $\sig(\alpha)$ set $\scode^\ca{X}_1(\alpha)$.
\end{proposition}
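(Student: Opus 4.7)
The plan is to adapt the construction of Proposition \ref{proposition from analytic code to good Souslin code} so as to produce a \emph{normal} rather than a \emph{good} Souslin scheme. As in the proof of \ref{proposition from analytic code to good Souslin code}, fix a recursive tree of triples $\tilde T$ such that $f(\beta) \in \scode^\ca{X}_1(\alpha)$ iff there exists $\gamma$ with $(\alpha \upharpoonright t, \beta \upharpoonright t, \gamma \upharpoonright t) \in \tilde T$ for all $t$, and let $T_\alpha := \set{(u,v)}{(\alpha \upharpoonright \lh(u), u, v) \in \tilde T}$ be its $\alpha$-section, which is uniformly recursive in $\alpha$; then $f^{-1}[\scode^\ca{X}_1(\alpha)] = \pr[[T_\alpha]]$.

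For $u \in \om^{<\om}$ decoded as a pair $(u^1, u^2)$ of equal-length sequences via the fixed pairing $\pair{\cdot,\cdot}$, I will set $P_u(\alpha) := \overline{f[N(\baire, t_{u^1})]}$ if $(u^1, u^2) \in T_\alpha$, and $P_u(\alpha) := \emptyset$ otherwise, where $t_{u^1}$ is a recursively chosen basic-ball code with $N(\baire, t_{u^1}) = \set{\beta \in \baire}{u^1 \sqsubseteq \beta}$; such a choice is available since in the standard recursive presentation of $\baire$ the cylinders coincide with basic balls. The verification that $(P_u(\alpha))_u$ is a normal Souslin scheme for $\scode^\ca{X}_1(\alpha)$ is then routine: closedness holds by construction; regularity follows from the tree property of $T_\alpha$ together with the nesting of cylinders; vanishing diameter along branches follows from the standard bound $d_\ca{X}(h(u^1), f(\beta)) \leq 2^{-\lh(u^1)+1}$ for $\beta$ extending $u^1$ (the defining property of the recursive surjection of \cite[1A.1]{yiannis_dst}); and the equality $\scode^\ca{X}_1(\alpha) = \ca{A}_u P_u(\alpha)$ is checked as at the end of the proof of Proposition \ref{proposition every analytic set admits a good Souslin scheme}, except that the $\supseteq$ direction now uses vanishing diameter in place of K\"onig's Lemma to identify the unique limit point with some $f(\gamma^1) \in \scode^\ca{X}_1(\alpha)$.

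For the effective content, the hypothesis supplies $\ep \in \del$ witnessing that $Q \in \Delta^0_1(\ep)$. The condition $N(\ca{X}, s) \cap P_u(\alpha) = \emptyset$ splits, by openness of $N(\ca{X}, s)$, into $(u^1, u^2) \notin T_\alpha$ or $N(\ca{X}, s) \cap f[N(\baire, t_{u^1})] = \emptyset$; the latter says that a specific semirecursive subset of $\baire$ is empty, which is $\Pi^0_1$ uniformly in $(\alpha, u, s)$ (since $f$ is recursive and emptiness of a semirecursive subset of $\baire$ is $\Pi^0_1$ in its code) and, a fortiori, $\Sigma^0_1(\ep)$-decidable. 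Enumerating such $s$'s --- padding non-members with a fixed null basic index $s_\emptyset$ with $N(\ca{X}, s_\emptyset) = \emptyset$ --- yields a closed code for $P_{\dec{s}}(\alpha)$ as a $\Sigma^0_1(\ep)$-recursive function of $(\alpha, s)$, which packages via $s \mapsto (\strans^\ca{X}_\ast(\alpha))_s$ into the desired $\strans^\ca{X}_\ast$. The main obstacle addressed by the hypothesis is exactly the one flagged in the remarks preceding the proposition: taking closures of analytic sets generally pushes the complexity to $\sig$, and it is precisely the $\del$-ness of $Q$ that keeps both the membership of each $P_u$ and the closed-code extraction within $\del$.
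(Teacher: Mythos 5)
Your Souslin scheme is exactly the paper's: $P_u(\alpha) := \overline{f[N_{u^1}]}$ if $(u^1,u^2) \in T_\alpha$ and $\emptyset$ otherwise, and the normality verification (closedness, regularity, vanishing diameter, $\ca{A}_u P_u(\alpha) = \scode^\ca{X}_1(\alpha)$) is correct and matches the paper modulo routine checking. The effective part, however, takes a genuinely different route from the paper, and there is a gap in it.

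The paper assembles the relation $\tilde{P}(\beta,s,x) \iff x \in P^\beta_{\dec{s}}$ on $\baire \times \om \times \ca{X}$, proves it is \emph{closed and $\del$} (this is where the hypothesis on $Q$ enters, to control the clause $x \in \overline{f[N_{\dec{s}}]}$), then applies the Louveau-type transfer of \cite[4C.13]{yiannis_dst} to produce one $\ep \in \del$ with $\tilde{P} = \fcode^{\baire \times \om \times \ca{X}}(\ep)$, and finally sections via the good universal system. You instead bypass Louveau entirely: you directly decide, for each $s$, whether $N(\ca{X},s) \cap P_u(\alpha) = \emptyset$, observing that this reduces (via $(u^1,u^2) \notin T_\alpha$ or $f^{-1}[N(\ca{X},s)] \cap N_{u^1} = \emptyset$) to the emptiness of a uniformly semirecursive subset of $\baire$, which is a $\Pi^0_1$ condition in the numerical data.

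The gap is the phrase ``is $\Pi^0_1$ uniformly \dots and, a fortiori, $\Sigma^0_1(\ep)$-decidable.'' There is no a fortiori: a $\Pi^0_1$ subset of $\om^2$ is $\Delta^0_1(\ep)$ only when $\ep$ computes the Turing jump $0'$, and the $\ep$ supplied by the hypothesis (one for which $Q \in \Delta^0_1(\ep)$, extracted via Louveau since you only know $Q$ is $\del$) need not compute $0'$. The repair is cheap --- replace $\ep$ by $\ep \oplus 0'$, which is still $\del$ --- but as written this step does not follow.

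Once you make that repair, a second issue surfaces: your argument no longer uses the hypothesis on $Q$ anywhere. Your $\Pi^0_1$ bound comes solely from the recursiveness of $f$ and the decidability of emptiness of basic balls in $\baire$, not from $\del$-ness of $Q$; so your closing paragraph, which credits the hypothesis with ``keeping the closed-code extraction within $\del$'', contradicts your own computation. If your $\Pi^0_1$ observation is right --- and it does look right, for exactly the reason you give --- it in fact shows that $\neg Q(t,x) \iff (\exists s)[x \in N(\ca{X},s) \ \& \ N(\ca{X},s) \cap f[N(\baire,t)] = \emptyset]$ is $\Sigma^0_2$, hence $Q$ is automatically $\Pi^0_2 \subseteq \del$ for \emph{any} recursive surjection $f: \baire \surj \ca{X}$, and the hypothesis of the proposition is redundant. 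You should either explicitly embrace that stronger conclusion (in which case the corollary for $\R^N$ also becomes automatic), or, if you believe the hypothesis is genuinely needed, locate the precise step of your construction that fails without it --- right now the write-up asserts the hypothesis is essential but demonstrates the opposite.
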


\begin{proof}
We fix a $\sig$ set $A$ for the discussion. We denote by $N_u$ the usual neighborhood $\set{\alpha \in \baire}{u \sqsubseteq \alpha}$ of the Baire space, where  $u \in \om^{<\om}$. 
 
We take the set $B : = f^{-1}[A]$; clearly $B$ is $\sig$ and so there is a recursive tree of pairs $T$ such that
\[
f(\alpha) \in A \iff \alpha \in B \iff (\exists \gamma)(\forall t)[(\alpha \upharpoonright t, \gamma \upharpoonright t) \in T].
\]
for all $\alpha \in \baire$. 

We consider the pairing function $\pair{\cdot} : \om^2 \to \om$ from the proof of Proposition \ref{proposition every analytic set admits a good Souslin scheme} and we keep the same notation $\pair{u,v}$, $\pair{\alpha,\gamma}$ for $u,v \in \om^{\om}$ and $\alpha,\gamma \in \baire$.

We define
\begin{align}\label{equation definition of Souslin scheme A}
P_{\pair{u,v}} =
\begin{cases}
\overline{f[N_u]}, & \ \text{if} \ (u,v) \in T,\\
\emptyset, & \ \text{else}.  
\end{cases}
\end{align}
To show the regularity of $(P_w)_{w}$, suppose that $w, w'$ are such that $w \sqsubseteq w'$. If $w' = \pair{u', v'}$ with $(u',v') \not \in T$ then $P_{w'} = \emptyset \subseteq P_w$. Hence we assume that $w' = \pair{u', v'}$ and that $(u', v') \in T$. Then there are  $u \sqsubseteq u'$ and $v \sqsubseteq v'$ such that $w = \pair{u,v}$. We have in particular that $(u,v) \in T$ and so $P_{w'} = \overline{f[N_{u'}]} \subseteq \overline{f[N_u]} = P_w$.

The fact that $(P_w)_{w}$ is of vanishing diameter follows from the continuity of $f$. Using the latter property and the surjectiveness of $f$ it is not hard to prove that 
\begin{align*}
x \in A 
\iff& \ (\exists \alpha,\gamma)(\forall t)[x \in P_{\pair {\alpha \upharpoonright t, \gamma \upharpoonright t}}]\\
\iff& \ (\exists \delta)(\forall t)[x \in P_{\delta \upharpoonright t}].
\end{align*}
Hence $A = \ca{A}_w P_w$, and so $(P_w)_w$ is a normal Souslin scheme for $A$.

Now we proceed to the definition of $\strans^\ca{X}$. We consider the universal set $\scode_1^{\ca{X}} \subseteq \baire \times \ca{X}$ for $\bolds^1_1 \upharpoonright \ca{X}$ and as above we find a recursive tree of triples $\tilde{T}$ such that
\[
f(\alpha) \in \scode_1^{\ca{X}}(\beta)\iff (\exists \gamma)(\forall t)[(\alpha \upharpoonright t, \gamma \upharpoonright t) \in \tilde{T}(\beta)],
\]
where 
 $\tilde{T}(\beta)$ is the tree of pairs $\set{(u,v)}{(\beta \upharpoonright \lh(u), u,v) \in \tilde{T}}$. In other words we replace $A$ and $T$ from above with $\scode_1^{\ca{X}}(\beta)$ and $\tilde{T}(\beta)$ respectively.

Following (\ref{equation definition of Souslin scheme A}) we define the set
\begin{align*}
\tilde{P}(\beta,s,x) 
\iff& \ (\exists u,v)[ \ \lh(u) = \lh(v) \ \& \ \dec{s} = \langle u,v \rangle \ \& \ (u,v) \in \tilde{T}(\beta) \ \& \ x \in \overline{f[N_{u}]} \ ].
\end{align*}
We denote the $(\beta,s)$-section of $\tilde{P}$ by $P^\beta_{\dec{s}}$, and clearly we have
\[
P^\beta_{\dec{s}} =
\begin{cases}
\overline{f[N_u]}, & \ \text{if} \ \dec{s} = \langle u,v\rangle \ \& \ (u,v) \in \tilde{T}(\beta),\\
\emptyset, & \ \text{else}.
\end{cases}
\]
According to the preceding the Souslin scheme $(P^\beta_{\dec{s}})_s$ is a normal Souslin scheme for $\scode^\ca{X}_1(\beta)$ for all $\beta \in \baire$.

It remains to extract a code for $(P^\beta_{\dec{s}})_s$ uniformly on $\beta$. To do this we show first that $\tilde{P}$ is $\del$ and closed. The former assertion is immediate from our hypothesis about $f$: we note that every basic neighborhood $N(\baire,t)$ is of the form $N_{\dec{s_t}}$ where the map $t \mapsto s_t$ is recursive, and conversely, every $N_{\dec{s}}$ is of the form $N(\baire,t_s)$ for some recursive map $s \mapsto t_s$. So our hypothesis about the set $Q$ of the statement, translates to saying that the set
\[
Q'(s,x) \iff x \in \overline{f[N_{\dec{s}}]}
\]
is \del. It is then clear that
\begin{align*}
\tilde{P}(\beta,s,x) 
\iff& \ (\exists s_0,s_1)[ \ \lh(\dec{s_0}) = \lh(\dec{s_1}) \ \& \ \dec{s} = \langle \dec{s_0},\dec{s_1} \rangle \ \&\\
& \ \& \ (\beta \upharpoonright \lh(\dec{s_0}),\dec{s_0},\dec{s_1}) \in \tilde{T} \ \& \ Q'(\dec{s_0},x) \ ].
\end{align*}
From the latter equivalence it follows easily that $\tilde{P}$ is closed.

From \cite[4C.13]{yiannis_dst} (which is the basis of the Louveau Separation \cite{louveau_a_separation_theorem_for_sigma_sets}), there is some $\ep \in \del$ such that $\tilde{P} = \fcode^{\baire \times \om \times \ca{X}}(\ep)$. Then for some recursive function $S$ we have that
\[
(\beta,s,x) \in \tilde{P} \iff \fcode^{\om \times \ca{X}}(\ep,\beta,s,x) \iff \fcode^\ca{X}(S(\ep,\beta,s),x)
\]
for all $\beta,s,x$. Therefore $S(\ep,\beta,s)$ is a closed code for the $(\beta,s)$-section of $\tilde{P}$, which is exactly $\tilde{P}^\beta_\dec{s}$.

To conclude the proof we define $\strans^\ca{X}_\ast: \baire \to \baire$ by
\[
\strans^\ca{X}_\ast(\beta)(\langle s,j \rangle) = S(\ep,\beta,s)(j)
\]
and $\strans^\ca{X}_\ast(k) = 0$ if $k$ does not have the form $\langle s,j \rangle$, so that $(\strans_\ast^\ca{X}(\beta))_s = S(\ep,\beta,s)$ for all $s$, \ie $(\strans_\ast^\ca{X}(\beta))_s$ is a closed code for $\tilde{P}^\beta_{\dec{s}}$. Clearly $\strans_\ast^{\ca{X}}$ is $\ep$-recursive.  
\end{proof}

\begin{lemma}[Folklore]
\label{lemma surjection to Rn convex}
For every natural $N \geq 1$ there is a recursive bijection $\pi_N: \baire \to \R^N$ such that the set $\pi[N(\baire,t)]$ is a product of semi-open intervals,
\[
[p_1(t),q_1(t)) \times \dots \times [p_N(t),q_N(t))
\]
where $p_1(t),q_1(t)$,$\dots$,$p_N(t),q_N(t) \in \Q$. \smallskip 

Moreover the preceding rational numbers can be chosen so that the function $$t \mapsto (p_1(t),q_1(t),\dots, p_N(t),q_N(t)) \in \R^{2N}$$ is recursive.
\end{lemma}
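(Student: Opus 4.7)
The plan is to construct $\pi_N$ by a recursive nested partition of $\R^N$ into semi-open rectangles with rational endpoints, indexed by finite sequences of naturals.

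First I would treat the case $N = 1$. Fix a recursive enumeration $\{k_m\}_{m \in \om}$ of $\mathbb{Z}$ and set $I_{(m)} := [k_m,\, k_m+1)$; these tile $\R$. Given any rational semi-open interval $[a,b)$, I refine it as
\[
[a,b) \;=\; \bigsqcup_{j \geq 0} \bigl[\,a + (b-a)(1-2^{-j}),\ a + (b-a)(1-2^{-(j+1)})\,\bigr),
\]
a partition of $[a,b)$ into rational semi-open subintervals of length $(b-a)\cdot 2^{-(j+1)}$. Iterating yields, for each $u \in \om^n$ with $n \geq 1$, a rational semi-open interval $I_u$ of length at most $2^{-(n-1)}$ whose endpoints are primitive recursive in $u$. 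Defining $\pi_1(\alpha)$ to be the unique real in $\bigcap_n I_{\alpha \upharpoonright n}$ gives a well-defined bijection $\baire \to \R$ (no endpoint ambiguity, since every level is a clean semi-open partition), which is recursive in the sense of recursive Polish spaces because the endpoints of $I_{\alpha \upharpoonright n}$ are rational and recursively computable from $\alpha$.

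Next I would lift the construction to $N \geq 1$ by running the same subdivision coordinatewise. Fix recursive bijections $\psi_0 : \om \to \mathbb{Z}^N$ and $\psi : \om \to \om^N$, set
\[
R_{(m)} \;:=\; \prod_{j=1}^N [\,(\psi_0(m))_j,\ (\psi_0(m))_j + 1\,),
\]
and refine any rational box $\prod_{j=1}^N [a_j, b_j)$ into the disjoint union over $i \in \om$ of the products $\prod_{j=1}^N [\,a_j + (b_j - a_j)(1 - 2^{-\psi(i)_j}),\ a_j + (b_j - a_j)(1 - 2^{-(\psi(i)_j+1)})\,)$. Iteration produces rational semi-open rectangles $R_u$ of max-norm diameter at most $2^{-(n-1)}$ for $u \in \om^n$, with endpoints recursive in $u$; set $\pi_N(\alpha) :=$ the unique point in $\bigcap_n R_{\alpha \upharpoonright n}$, a recursive bijection by the same argument as for $N = 1$.

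Finally I would observe that under the standard recursive presentation of $\baire$, each basic neighborhood $N(\baire,t)$ is either $\baire$ itself or a cylinder $\{\alpha : u \sqsubseteq \alpha\}$ for some nonempty $u \in \om^{<\om}$ that is a recursive function of $t$. In the second (generic) case $\pi_N[N(\baire,t)] = R_u$ is precisely a product of rational semi-open intervals, and the $2N$-tuple of its endpoints is recursive in $t$; for the degenerate case one extends by any convention. The main obstacle is essentially bookkeeping: verifying that rationality is preserved by the affine rescaling $[a,b) \mapsto [a + (b-a)x,\, a + (b-a)y)$ at every subdivision step (it is, because $a,b,x,y$ are rational) and that bijectivity survives the iteration, which follows from the semi-open partition property at every level together with the vanishing diameters.
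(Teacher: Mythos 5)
Your proposal is correct and takes essentially the same approach as the paper. Both construct a nested recursive scheme of rational semi-open intervals via the dyadic refinement $[a,b) = \bigcup_j [\,a+(b-a)(1-2^{-j}),\ a+(b-a)(1-2^{-j-1})\,)$, define the bijection as the unique point in the intersection along a branch, and lift to $N$ dimensions through a recursive pairing of $\om^N$ with $\om$; the only cosmetic difference is that the paper first constructs $\sigma:\baire\to\R$ and then forms the coordinatewise product, while you build the $N$-dimensional rectangles directly.
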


\begin{proof}
First we notice that for real numbers $a < b$ it holds $[a,b) = \cup_{\n} \; [a_n,a_{n+1})$, where the sequence $(a_n)_{\n}$ is given by
\begin{align*}
a_0 =& \ a\;,\\
a_{n+1} =& \ a_n + (b-a) \cdot 2^{-(n+1)} = a + (b-a) \cdot \sum_{k=1}^{n+1} 2^{-k}\;.
\end{align*}
Clearly the intervals $[a_n,a_{n+1})$, \n, are pairwise disjoint, their length is at most $2^{-1}\cdot (b-a)$, and moreover $[a_n,a_{n+1}] \subseteq [a,b)$ for all $n$. It is also clear that if the numbers $a, b$ are rationals, then $(a_n)_{\n}$ is also a sequence of rational numbers. Additionally $(a_n)_{\n}$ is obtained recursively from the pair $(a,b)$.\smallskip

Then we find a Souslin scheme $(I_u)_{u \in \om^{<\om}}$ in $\R$ with the following properties:
\begin{list}{\tu{(\arabic{index})}}{\usecounter{index}}
\item $I_\emptyset = \R$;
\item $I_{u} = \cup_{\n} \; I_{\cn{u}{(n)}} = \cup_{\n} \ \overline{I_{\cn{u}{(n)}}}$;
\item length$(I_{\cn{u}{(n)}}) \leq 2^{-\lh(u)}$ for all $u,n$;
\item $I_u \cap I_v = \emptyset$ if $u$ and $v$ are incomparable;
\item $I_{u}$ is an interval of the form $[p,q)$, where $p, q \in \Q$ and $u \neq \emptyset$. Moreover the relations $P1, P_2 \subseteq \om \times \om$ defined by
\begin{align*}
P(t,s) \iff& \ \dec{t} \neq \emptyset \ \& \ \pq{s} \ \text{is the minimum of} \ I_{\dec{t}},\\ 
Q(t,s) \iff& \ \dec{t} \neq \emptyset \ \& \ \pq{s} \ \text{is the supremum of} \ I_{\dec{t}},
\end{align*}
are recursive.
\end{list}
This is fairly easy to do. In the first two steps we take $I_{\emptyset} = \R$, $I_{(2k)} = [k,k+1)$, and $I_{(2k+1)} = [-(k+1),-k)$, where $k \in \om$. In the next steps we employ the construction of the sequence $(a_n)_{\n}$ from above, for each of the intervals $I_u$ that have already been constructed. To ensure the latter of the preceding properties we apply Kleene's Recursion Theorem. We omit the details.\smallskip

Next we define the function $\sigma: \baire \to \R$ by
\[
\sigma(\alpha) = \ \text{the unique point in the intersection} \ \cap_{\n} \; I_{\alpha \upharpoonright n}.
\]
It is evident that $\sigma$ is bijective. We also have that
\[
\sigma(\alpha) \in (a,b) \iff (\exists u \neq \emptyset)[ u \sqsubseteq \alpha \ \& \ a < \min I_u \ \& \ \sup I_u < b ],
\]
which shows that $\sigma$ is recursive as well. Moreover it is not difficult to see that $\sigma[N_u] = I_u$ for all $u$.

We consider some natural number $N \geq 1$ and a recursive bijection $\bijN{\cdot}: \om^N \to \om: (a_1,\dots,a_N) \mapsto \bijN{a_1,\dots,a_N}$ whose inverse is also recursive. Similarly to above we denote the $N$-tuples $(u^1,\dots,u^N)$ of finite sequences of naturals of the same length $n$ with the finite sequence
\[
(\bijN{u^1(0),\dots, u^N(0)}, \dots, \bijN{u^1(n-1),\dots, u^N(n-1)}),
\]
which we denote by $\bijN{u^1,\dots,u^N}$. (Using the function $\bijN{\cdot}$ over $\langle \cdot \rangle$ makes the formulation of our assertion simpler.) 

Given $\alpha_1,\dots,\alpha_N \in \baire$, and by another abuse of the notation, we let $\bijN{\alpha_1,\dots,\alpha_N}$ be the unique $\alpha \in \baire$ for which 
\[
\alpha \upharpoonright t 
= \bijN{\alpha_1\upharpoonright t, \dots, \alpha_N \upharpoonright t}
= (\bijN{\alpha_1(0), \dots, \alpha_N(0)}, \dots, \bijN{\alpha_1(t-1), \dots, \alpha_N(t-1)})
\]
for all $t$. From the surjectiveness of $\bijN{\cdot}$ it follows that for every $u \in \om^{<\om}$ there are $u^1,\dots, u^N \in \om^{<\om}$ such that $u = \bijN{u^1,\dots,u^N}$. Similarly every $\alpha \in \baire$ is of the form $\bijN{\alpha_1,\dots,\alpha_N}$ for some $\alpha_1, \dots, \alpha_N \in \baire$.

It is not hard to verify that $N_u$ is the image of $N_{u^1} \times \dots \times N_{u^N}$ under $\bijN{\cdot}$, where $u = \bijN{u^1,\dots,u^N}$. We denote the latter image by $\bijN{N_{u^1} \times \dots \times N_{u^N}}$.

Finally we define the function $\sigma_N: \baire \to \R^N:$
\[
\alpha = \bijN{\alpha_1,\dots,\alpha_N} \mapsto (\sigma(\alpha_1),\dots,\sigma(\alpha_N)).
\]
Clearly $\sigma_N$ is bijective and recursive. Moreover
\begin{align*}
\sigma_N[N_u] 
=& \ \sigma_N[\bijN{N_{u^1} \times \dots \times N_{u^N}}]\\
=& \ \sigma[N_{u^1}] \times \dots \times \sigma[N_{u^N}]\\
=& \ I_{u^1} \times \dots I_{u^N}.
\end{align*}
The preceding $u^1, \dots, u^N$ can be retrieved recursively from $u$, so that for all $i=1,\dots, N$ the relations
\begin{align*}
P^i(t,s) \iff& \ \text{$\pq{s}$ is the minimum of $I_{u^i}$\;, where $\dec{t} = \bijN{u^1,\dots,u^i,\dots, u^N} \neq \emptyset$}\\
Q^i(t,s) \iff& \ \text{$\pq{s}$ is the supremum of $I_{u^i}$\;, where $\dec{t} = \bijN{u^1,\dots,u^i,\dots, u^N} \neq \emptyset$}
\end{align*}
are recursive. To finish the proof define
\begin{align*}
p_i':& \ \om \to \om : t \mapsto \text{the least $s$ for which} \ P^i_1(t,s)\\
q_i':& \ \om \to \om : t \mapsto \text{the least $s$ for which} \ P^i_2(t,s)\\
p_i:& \ \om \to \Q: p_i(t) = \pq{p_i'(t)}, \quad q_i: \ \om \to \Q: q_i(t) = \pq{q_i'(t)}
\end{align*}
for all $i = 1,\dots,N$.
\end{proof}

\begin{corollary}
\label{corollary real numbers normal Souslin code}
For every natural number $N \geq 1$ there is some $\ep \in \del$ and a $\Sigma^0_1(\ep)$-recursive function $\strans^N_\ast: \baire \to \baire$ such that for all $\alpha \in \baire$ the $\del(\alpha)$ point $\strans^N_\ast(\alpha)$ is a normal Souslin code for $\scode^\ca{X}_1(\alpha)$. 
\end{corollary}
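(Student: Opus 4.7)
The plan is to derive this as an immediate application of Proposition \ref{proposition transition to normal Souslin codes} to the space $\ca{X} = \R^N$, using the recursive bijection supplied by Lemma \ref{lemma surjection to Rn convex} as the required surjection $f$.

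First I would take $f := \pi_N : \baire \to \R^N$ given by Lemma \ref{lemma surjection to Rn convex}. The hypothesis of Proposition \ref{proposition transition to normal Souslin codes} demands that the relation $Q(t,x) \iff x \in \overline{f[N(\baire,t)]}$ be $\del$. By the lemma, $\pi_N[N(\baire,t)]$ is a product of semi-open rational intervals
\[
[p_1(t),q_1(t)) \times \dots \times [p_N(t),q_N(t)),
\]
where the map $t \mapsto (p_1(t),q_1(t),\dots,p_N(t),q_N(t)) \in \R^{2N}$ is recursive. Hence the closure is simply the closed rational box
\[
\overline{\pi_N[N(\baire,t)]} = [p_1(t),q_1(t)] \times \dots \times [p_N(t),q_N(t)],
\]
and the condition $x = (x_1,\dots,x_N) \in \overline{\pi_N[N(\baire,t)]}$ is equivalent to
\[
\bigwedge_{i=1}^{N} \bigl(p_i(t) \leq x_i \ \&\ x_i \leq q_i(t)\bigr).
\]
Since the rational endpoints are obtained from $t$ by a recursive function and the relations $r \leq x$, $x \leq r$ between a recursive rational $r$ and a real coordinate $x_i$ are (jointly) $\boldp^0_1$ and $\bolds^0_1$ respectively on a recursive Polish space, the relation $Q$ is in fact recursive, and so certainly $\del$.

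Having verified the hypothesis, I would invoke Proposition \ref{proposition transition to normal Souslin codes} directly, obtaining $\ep \in \del$ and a $\Sigma^0_1(\ep)$-recursive function $\strans^{\R^N}_\ast : \baire \to \baire$ such that for every $\alpha \in \baire$ the point $\strans^{\R^N}_\ast(\alpha)$ is a normal Souslin code for the $\sig(\alpha)$ set $\scode^{\R^N}_1(\alpha)$, and that $\strans^{\R^N}_\ast(\alpha)$ is $\del(\alpha)$. Setting $\strans^N_\ast := \strans^{\R^N}_\ast$ yields the conclusion. I do not expect any real obstacle here; the only substantive point is the verification of the closure condition, and the use of semi-open rational boxes in Lemma \ref{lemma surjection to Rn convex} is precisely what makes the closure computation recursive rather than merely $\del$.
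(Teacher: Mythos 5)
Your proposal takes essentially the same route as the paper: instantiate Proposition \ref{proposition transition to normal Souslin codes} with $f = \pi_N$ from Lemma \ref{lemma surjection to Rn convex}, and verify that the closure relation $Q$ is $\del$ because $\overline{\pi_N[N(\baire,t)]}$ is the closed rational box $[p_1(t),q_1(t)] \times \dots \times [p_N(t),q_N(t)]$. One small slip in the complexity bookkeeping: both $p_i(t) \leq x_i$ and $x_i \leq q_i(t)$ are $\Pi^0_1$ (each describes a closed half-space), so $Q$ is $\Pi^0_1$ rather than recursive as you claim — you cannot get $\Sigma^0_1$ from $x_i \leq q_i(t)$, only from a strict inequality. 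This does not affect the argument, since $\Pi^0_1 \subseteq \del$ is all that is needed; the paper itself observes only that $Q$ is $\Pi^0_2$ via a rewriting in terms of rational approximations, which is slightly less tight than your $\Pi^0_1$ bound but serves the same purpose.
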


\begin{proof}
We consider the functions $\pi_N: \baire \to \R^N$ and $q_1,p_1,\dots,p_N,q_N: \om \to \R$ as in the statement of Lemma \ref{lemma surjection to Rn convex}, and we notice that for all $x \in \R^N$ we have
\begin{align*}
x \in \overline{f[N(\baire,t)]} 
\iff& \ x \in [p_1(t),q_1(t)] \times \dots \times [p_N(t),q_N(t)]\\
\iff& \ (\forall k)(\exists s_1,\dots,s_N)(\forall i =1,\dots,N)\\
& \ \hspace*{5mm}[p_i(t) < \pq{s_i} < q_i(t) \ \& \ |x_i - \pq{s_i}| < 2^{-k}]
\end{align*}
for all $t \in \om$. The latter relation is clearly $\Pi^0_2$. Hence the hypothesis of Proposition \ref{proposition transition to normal Souslin codes} is satisfied and we are done.
\end{proof}

\subsection{Codes for some open convex sets}

\label{subsection codes for some open convex sets}

As we claimed above it is possible to derive in a \del-way the codes of sets of the form $V_u: = \set{x \in \R^N}{d(x, H(Q_u \cap [-m,m]^N)) <  2^{-n}}$ out of a code of a given Souslin scheme $(Q_u)_{u \in \om^{<\om}}$. There are however two ways here to interpret the term ``code for $V_u$". The first one is to use the open codes, since the sets $V_u$ are evidently open subsets of $\R^N$. The other way reflects the definition of the convexly generated sets. More specifically, one can write the open set $V_u$ as the increasing union of a sequence of compact convex sets $(F_{u,i})_{i \in \om}$. We would then use a closed code $\beta_{u,i}$ for each $F_{u,i}$ to encode $V_u$. The term ``\del-way" means that the function $(u,i) \mapsto \beta_{u,i}$ is $\del$-recursive.\footnote{This method essentially describes the basis step of the \emph{convexly generated codes} that we will introduce in the sequel.} The next proposition settles our preceding claim with respect to both of these ways for encoding.

\begin{proposition}
\label{proposition basis of coding}
For every natural number $N \geq 1$ there is an $\ep \in \del$ and $\Sigma^0_5(\ep)$-measurable functions $f,g: \om^2 \times \baire \to \baire$ satisfying for all $(n,m,\alpha)$ with $\fcode^{\R^N}(\alpha) \cap [-m,m^N] \neq \emptyset$ the following:
\begin{list}{}{}
\item[\tu{(}a\tu{)}] $f(n,m,\alpha)$ is an open code of the set
\[
\set{x \in \R^N}{d\left(x,H\left(\fcode^{\R^N}(\alpha) \cap [-m,m^N]\right)\right) < 2^{-n}}.
\]
\item[\tu{(}b\tu{)}] It holds $g(n,m,\alpha)(0) = 1$, and for all $i \in \om$ we have that $\rfn{g(n,m,\alpha)^\ast}(i)$ is defined and is a closed code for a convex compact subset of $\ocode^{\R^N}(f(n,m,\alpha))$ with
\[
\ocode^{\R^N}(f(n,m,\alpha)) = \cup_{i \in \om} \; \fcode^{\R^N}(\rfn{g(n,m,\alpha)^\ast}(i)).
\]
Moreover $\fcode^{\R^N}(\rfn{g(n,m,\alpha)^\ast}(i)) \subseteq \fcode^{\R^N}(\rfn{g(n,m,\alpha)^\ast}(i+1))$ for all $i \in \om$.
\end{list}
\end{proposition}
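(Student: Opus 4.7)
The plan is to derive codes for the open convex set
\[
V := \{x \in \R^N : d(x, H(K)) < 2^{-n}\}, \qquad K := \fcode^{\R^N}(\alpha) \cap [-m,m]^N,
\]
together with an explicit exhaustion of $V$ by compact convex sets, by leveraging Carath\'eodory's representation $H(K) = \Phi[K^{N+1} \times \Delta_N]$, where $\Phi:(\vec z, \vec\lambda) \mapsto \sum_{i=1}^{N+1} \lambda_i z_i$ is the recursive barycenter map and $\Delta_N$ is the standard simplex. Since $K^{N+1} \times \Delta_N$ is a $\Pi^0_1(\alpha)$ compact subset of the recursive compact ambient $([-m,m]^N)^{N+1} \times \Delta_N$, the standard Heine--Borel argument yields the following uniform complexity calculus: ``$d(c, H(K)) > r$'', equivalent to $H(K) \cap \overline{B(c,r)} = \emptyset$, is $\Sigma^0_1(\alpha)$ (emptiness of a $\Pi^0_1(\alpha)$ compact set reduces to the existence of a finite basic-open cover of the recursive ambient); and ``$d(c, H(K)) < r$'', equivalent to $H(K) \cap B(c,r) \neq \emptyset$, is $\Sigma^0_2(\alpha)$ (write $B(c,r) = \bigcup_k \overline{B(c, r - 2^{-k})}$ and apply the previous observation).

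For item (a), I would set $f(n,m,\alpha)$ to enumerate the set
\[
G(n,m,\alpha) := \{s \in \om : d(c_s, H(K)) + r_s < 2^{-n}\},
\]
which is nonempty by the standing hypothesis $K \neq \emptyset$. By the $1$-Lipschitz property of $d(\cdot, H(K))$, $s \in G(n,m,\alpha)$ is equivalent to $\overline{N(\R^N, s)} \subseteq V$, and conversely every point of $V$ lies in such a basic closed ball by openness of $V$; hence $f(n,m,\alpha)$ is an open code for $V$. Membership in $G(n,m,\alpha)$ is the $\Sigma^0_2(\alpha)$ condition ``$H(K) \cap B(c_s, 2^{-n} - r_s) \neq \emptyset$''.

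For item (b), I would put $r_i := (1 - 2^{-i-1}) \cdot 2^{-n}$ and
\[
L_i := \{x \in \R^N : d(x, H(K)) \leq r_i\} \cap [-i, i]^N.
\]
Each $L_i$ is a compact convex subset of $V$ (intersection of the sublevel set of the convex function $d(\cdot, H(K))$ with a box), the $L_i$ are increasing, and $\bigcup_i L_i = V$ since every $x \in V$ satisfies $d(x, H(K)) < 2^{-n}$ strictly and $\|x\|_\infty < \infty$. A closed code for $L_i$ is an open code for the complement $L_i^c = A_i \cup B_i$ with $A_i := \{x : d(x, H(K)) > r_i\}$ and $B_i := \{x : \|x\|_\infty > i\}$: a basic $N(\R^N, s)$ satisfies $\overline{N(\R^N, s)} \subseteq A_i$ iff $d(c_s, H(K)) > r_s + r_i$, a $\Sigma^0_1(\alpha)$ relation, while $\overline{N(\R^N, s)} \subseteq B_i$ is a recursive relation. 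Merging the two enumerations yields an open code for $L_i^c$; Kleene's Recursion Theorem then packages the map $i \mapsto (\text{this code})$ as $g(n,m,\alpha)$ with $g(n,m,\alpha)(0) = 1$ and $\rfn{g(n,m,\alpha)^\ast}(i)$ equal to the code. Finally, Louveau's Separation Theorem \cite[4C.13]{yiannis_dst} (applied as in the proof of Proposition \ref{proposition transition to normal Souslin codes}) provides the single $\ep \in \del$ across which the $\Sigma^0_2(\alpha)$ and $\Sigma^0_1(\alpha)$ predicates above are uniformly $\Sigma^0_2(\ep)$ and $\Sigma^0_1(\ep)$ in $\alpha$.

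The main obstacle is the complexity bookkeeping. Enumerating the $\Sigma^0_2(\ep)$ set $G$ for $f$ in order requires the Boolean combination ``$s \in G$ and exactly $k$ smaller elements of $G$'', a mixture of $\Sigma^0_2(\ep)$ and $\Pi^0_2(\ep)$ conditions; the analogous bookkeeping for $g$, together with the Kleene-recursion-theoretic stitching of the per-$i$ closed codes into a single index and the conversion between open codes of complements and closed codes via the fixed universal system $\fcode^{\R^N}$, is precisely what pushes the final measurability level to the announced $\Sigma^0_5(\ep)$ bound.
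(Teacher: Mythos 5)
Your proposal is a valid alternative strategy that differs from the paper's in one essential place. For part~(a) both you and the paper reduce to a predicate about the distance $d(\cdot, H(K))$ (the paper first builds a $\del$-recursive distance function $f_d$ in Lemma~\ref{lemma transition from closed code to distance from convex hull}, you argue directly via the Carath\'eodory pull-back $\Phi^{-1}[\overline{B(c,r)}]\cap(K^{N+1}\times\Delta_N)$). Part~(b) is where you genuinely diverge: the paper writes $V$ as the increasing union $\cup_i H\bigl(\cup_{j\le i}\overline{N(\R^N,f(n,m,\alpha)(j))}\bigr)$ and must compute closed codes for convex hulls of finite unions of closed balls (this is the whole point of Lemma~\ref{lemma transition from closed code to code of convex hull}), whereas you instead exhaust $V$ by the sublevel sets $L_i=\{x: d(x,H(K))\le r_i\}\cap[-i,i]^N$. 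These $L_i$ are trivially compact, convex, increasing, and exhaust $V$, and their closed codes reduce to the same distance predicate already used in part~(a). This is a real simplification: it collapses the paper's two-lemma preparation (codes for $\pi[\text{compact}]$, codes for $H(\cdot)$) into the single distance calculus and avoids the paper's delicate $h_1,h_1',h_1''$ bookkeeping for coding $\cup_{j\le i}\overline{B_j}$. What the paper's route buys is that the intermediate object (a code for $H(K)$ itself, via Lemma~\ref{lemma transition from closed code to code of convex hull}) is also used outside Proposition~\ref{proposition basis of coding}, in Lemma~\ref{lemma transition from closed code to distance from convex hull} and ultimately in the clauses $C_3$ of the proof of Theorem~\ref{theorem uniform preiss separation}; your route would have to re-derive or restate that machinery there.

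The one place where your write-up is thinner than it should be is the ``uniform complexity calculus'' paragraph. The claims that ``$d(c,H(K))>r$'' is $\Sigma^0_1(\alpha)$ and ``$d(c,H(K))<r$'' is $\Sigma^0_2(\alpha)$ are correct in spirit (compactness of $([-m,m]^N)^{N+1}\times\Delta_N$ together with decidability of ``finitely many rational boxes cover a rational cube'' in $\R^N$), but making them uniform in the parameter $m$ is exactly where the paper spends most of its effort (Claims~4--6 of Lemma~\ref{lemma transition from closed code to code of convex hull} re-index the neighborhood bases of the varying compact ambients $\ca{X}_m$ and invoke the $\Delta$-Selection Principle to produce the single $\ep$). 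Louveau separation converts a set that is simultaneously $\del$ and $\bolds^0_k$ into a $\Sigma^0_k(\ep)$ set, but first one must actually verify that the relation, as a subset of $\om^3\times\baire$ with $m$ ranging, is $\bolds^0_k$; this is not free. Also, your remark about enumerating $G$ ``in order'' is an unnecessary detour: as the paper does, one can simply set $f(n,m,\alpha)(j)=j$ when $j\in G$ and $f(n,m,\alpha)(j)=0$ otherwise, since $N(\R^N,0)=\emptyset$, and this costs only one Turing jump on top of the complexity of $G$. Neither of these issues is fatal since the stated budget $\Sigma^0_5(\ep)$ is generous, but they should be addressed before the argument can be considered complete.
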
   

It is worth commenting on the transition from the function $f$ to the function $g$ in the preceding proposition. The idea is to use the fact that every section $F^\alpha: \om \to \baire: i \mapsto F(i,\alpha)$ of a $\Sigma^0_n$-recursive function $F: \om \times \baire \to \baire$ is of the form $\rfn{\gamma(\alpha)}^{\om,\baire}$ for some $\Sigma^0_n$-recursive function $\alpha \mapsto \gamma(\alpha)$. The latter is essentially a consequence of the fact that every $\Sigma^0_n(\alpha)$ subset of the naturals is recursive in the $(n-1)$-th Turing jump of $\alpha$, uniformly on $\alpha$. We will give more details in the sequel.\smallskip

The proof of Proposition \ref{proposition basis of coding} requires a detailed inspection of some classical facts, which include among other things  the result that the convex hull of a compact set is compact (Caratheodory Theorem) and the fact that every open set is convexly generated. To do this we employ a series of Lemmata.

\begin{lemma}
\label{lemma code of continuous image of compact}
Let $\ca{X}$, $\ca{Y}$ be recursive Polish spaces and let $\pi : \ca{X} \to \ca{Y}$ be a continuous $\del$-recursive function. We consider the function $u(\pi) \equiv u: \baire \to \cantor$ defined by
\[
u(\alpha)(s) = 1 \iff \pi^{-1}[N(\ca{Y},s)] \cap \fcode^\ca{X}(\alpha) = \emptyset
\]
where $(s,\alpha) \in \om \times \cantor$. Then $u$ has the following properties:
\begin{list}{}{}
\item[\tu{(}1\tu{)}] If the closed set $\fcode^{\ca{X}}(\alpha)$ is compact \tu{(}including the case of the empty set\tu{)} then $u(\alpha)$ is a closed code for $\pi[\fcode^{\ca{X}}(\alpha)]$, \ie $$\fcode^{\ca{Y}}(u(\alpha)) = \pi[\fcode^{\ca{X}}(\alpha)]; \quad \text{and}$$
\item[\tu{(}2\tu{)}] the function $u$ is computed by a $\sig$ and a $\pii$ relation on $\set{\alpha}{\fcode^\ca{X}(\alpha) \ \text{is compact}}$, \ie there are relations $P, Q \subseteq \baire \times \omega$ in $\sig$ and $\pii$ respectively such that whenever $\fcode^{\ca{X}}(\alpha)$ is compact it holds
\[
u(\alpha) \in N(\baire,s) \iff P(\alpha,s) \iff Q(\alpha,s).
\]
\end{list}
If moreover the pre-image of every closed ball $\set{y \in \ca{Y}}{d_\ca{Y}(y,y_0) \leq r}$ under $\pi$ is a compact subset of \ca{X} then \tu{(}2\tu{)} is strengthened to the statement that $u$ is $\Sigma^0_3(\ep)$-recursive for some $\ep \in \del$.\footnote{Although $\del$-recursiveness is sufficient for our purposes, we state this and the subsequent results in their full strength.}
\end{lemma}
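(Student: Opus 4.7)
The plan is as follows. For part~(1), I would argue topologically: since $\pi$ is continuous and $K := \fcode^{\ca{X}}(\alpha)$ is compact (including the empty case), $\pi[K]$ is compact, hence closed in $\ca{Y}$. Its complement is open, hence a union of basic opens $N(\ca{Y},s)$. By definition of $u$, we have $N(\ca{Y},s) \cap \pi[K] = \emptyset$ iff $\pi^{-1}[N(\ca{Y},s)] \cap K = \emptyset$ iff $u(\alpha)(s) = 1$. Thus the indices $s$ with $u(\alpha)(s) = 1$ select exactly the basic opens whose union equals $\ca{Y} \setminus \pi[K]$, which is precisely the defining property of $u(\alpha)$ being a closed code for $\pi[K]$.

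For~(2), the $\pii$-half is immediate: since $\pi$ is $\del$-recursive, the set $R = \set{(x,s)}{\pi(x) \in N(\ca{Y},s)}$ is $\del$, so $u(\alpha)(s) = 0 \iff (\exists x)[x \in \fcode^{\ca{X}}(\alpha) \ \wedge \ R(x,s)]$ is $\sig$, and $u(\alpha)(s) = 1$ is $\pii$. The finite conjunction of such bit-values defining $u(\alpha) \in N(\baire,s)$ is a Boolean combination of $\sig$ and $\pii$ conditions, which I would package into a single $\pii$ relation by rewriting the $u(\alpha)(i) = 0$-conjuncts via their $\pii$-form. This already computes $u$ by a $\pii$ relation, with or without the compactness assumption.

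The main obstacle is the dual $\sig$-computation, which genuinely requires compactness of $K$. The idea is that compactness of $K$ forces compactness of $\pi[K]$, so the disjointness $\pi[K] \cap N(\ca{Y},s) = \emptyset$ becomes $d(y_0,\pi[K]) \geq r$ for $N(\ca{Y},s) = B(y_0,r)$. One produces a $\sig$ witness as follows: assert the existence of a rational $\epsilon > 0$ and of finitely many basic opens $N(\ca{X},t_1),\dots,N(\ca{X},t_n)$ whose union covers $K$ and whose $\pi$-images each lie at distance $\geq r - \epsilon$ from $y_0$. The existence of such data is $\sig$; the equivalence with the original disjointness uses uniform continuity of $\pi$ on the compact $K$ together with a standard finite-subcover argument. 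Pairing this $\sig$-description of $u(\alpha)(s) = 1$ with the direct $\sig$-description of $u(\alpha)(s) = 0$, the finite conjunction defining $u(\alpha) \in N(\baire,s)$ becomes $\sig$.

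Finally, for the strengthened assertion under the properness hypothesis, I would first invoke Louveau's separation theorem (Moschovakis 4C.13) to produce an $\ep \in \del$ rendering $R$ a lightface $\Sigma^0_1(\ep)$ set, so that $\pi^{-1}[N(\ca{Y},s)]$ is written effectively as the increasing union $\cup_{r' < r} \pi^{-1}[\overline{B}(y_0,r')]$ of compact sets. The condition $u(\alpha)(s) = 1$ then reformulates as: for every rational $r' < r$, the compact set $\pi^{-1}[\overline{B}(y_0,r')]$ is contained in the open set $\cup_n N(\ca{X},\alpha(n))$, and by effective compactness this reduces to a finite-subcover witness. The resulting three arithmetic alternations --- a universal over $r'$, an existential over the size of the subcover, and a basic-open inclusion --- deliver the $\Sigma^0_3(\ep)$-recursiveness of $u$. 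I expect the careful arithmetic bookkeeping of this last step, especially verifying that both $u(\alpha)(i) = 1$ and $u(\alpha)(i) = 0$ admit $\Delta^0_3(\ep)$-descriptions so their conjunction stays $\Sigma^0_3(\ep)$, to be the most delicate part of the argument.
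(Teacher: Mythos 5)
Your argument for part (1) is fine and matches the paper's in substance (the paper proves closedness of $\pi[K]$ directly by extracting a convergent subsequence; you invoke "compact image of compact under continuous is closed" and then the basis property --- same content). Your sketch for the strengthened $\Sigma^0_3(\ep)$-assertion is also essentially the paper's argument: cover the compact sets $\pi^{-1}[\overline{B}(y_0,r')]$ by finitely many $N(\ca{X},\alpha(n_i))$, note the resulting condition is $\Pi^0_2$ in $\alpha$ (hence $u$ is $\bolds^0_3$-measurable), and apply Louveau separation to lightface it.

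Part (2), however, has a genuine gap, and it is the crux of the lemma. You correctly observe that $u(\alpha)(s)=0$ is $\sig$ and $u(\alpha)(s)=1$ is $\pii$, but then you assert that the neighborhood diagram of $u$ is ``already'' $\pii$ ``with or without the compactness assumption.'' This is false: $u(\alpha)\in N(\baire,t)$ is a finite conjunction of bit-conditions of \emph{both} types, and without compactness you have no $\pii$-form for the $u(\alpha)(i)=0$ conjuncts --- nonemptiness of a closed set coded negatively (as the complement of an enumerated open set) is $\Sigma^1_1$-complete in general. Your proposed $\sig$-witness for $u(\alpha)(s)=1$ via finite subcovers runs into the same difficulty in disguise: the covering condition $\fcode^{\ca{X}}(\alpha)\subseteq\bigcup_i N(\ca{X},t_i)$ unwinds to a universal quantifier over $\ca{X}$ and is a $\pii$ condition in $\alpha$, not $\sig$, so the ``existence of such data'' you write down is actually $\pii$ and cannot serve as the $\sig$ half. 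The ingredient you are missing, and which the paper uses, is the theorem that a non-empty compact $\del(\alpha)$ closed set contains a $\del(\alpha)$ point (Moschovakis 4F.15). Applied to $\fcode^{\ca{X}}(\alpha)\cap B(s,k)$ (where $B(s,k)=\{x: d_{\ca{Y}}(\pi(x),\dense{\ca{Y}}{(s)_0})\le \pq{(s)_1}-2^{-k}\}$), it lets one replace the quantifier ``$\exists x$'' by ``$\exists x\in\del(\alpha)$'', and restricted existential quantification over $\del(\alpha)$ applied to a $\pii$ matrix stays $\pii$. This gives a $\pii$-form of $u(\alpha)(s)=0$, hence both required descriptive forms of the neighborhood diagram. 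Without this step the proof of part (2) does not go through.
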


\begin{proof}
We fix some compact set $K \subseteq  \ca{X}$ for the discussion. For all \n, let
\[
I= \set{s \in \om}{\pi^{-1}[N(\ca{Y},s)] \cap K = \emptyset}.
\]
We claim that $\ca{Y} \setminus \pi[K] = \cup_{s \in I} N(\ca{Y},s)$. To see this consider some $y \in \ca{Y} \setminus \pi[K]$, and assume towards a contradiction that for all $s \in I$ we have that $y \not \in N(\ca{Y},s)$. This means that for all $s \in \om$ with $y \in N(\ca{Y},s)$ it holds $\pi^{-1}[N(\ca{Y},s)] \cap K \neq \emptyset$. There is therefore a sequence $(x_n)_{\n}$ in $K$ such that $\pi(x_n) \to y$. Without loss of generality we may assume that the sequence $(x_n)_{\n}$ converges to some $x \in K$. From the continuity of $\pi$ we have that $\pi(x_n) \to \pi(x)$, and hence $y = \pi(x) \in \pi[K]$, a contradiction. For the converse inclusion if $y$ is a member of $N(\ca{Y},s)$ for some $s \in I$, and (towards a contradiction) $y = \pi(x)$ for some $x \in K$, then $x \in \pi^{-1}[N(\ca{Y},s)] \cap K$, which implies that $s \not \in I$.

From the preceding it follows that the set $I$ above gives a closed code for the set $\pi[K]$. Define the set $J \subseteq \baire \times \om$ by
\[
J(\alpha,s) \iff \pi^{-1}[N(\ca{Y},s)] \cap \fcode^\ca{X}(\alpha) = \emptyset,
\]
so that from above $\ca{Y} \setminus \pi[\fcode^{\ca{X}}(\alpha)] = \cup_{s \in J(\alpha)} N(\ca{Y},s)$ whenever $\fcode^\ca{Y}(\alpha)$ is compact, where $J(\alpha)$ is as usual the $\alpha$-section of $J$. Since $u(\alpha)(s) = 1 \iff s \in J(\alpha)$ we have that $\pi[\fcode^{\ca{X}}(\alpha)] = \fcode^\ca{Y}(u(\alpha))$ whenever $\fcode^\ca{Y}(\alpha)$ is compact.

We now check property (2). Define $B \subseteq \om^2 \times \ca{X}$ by
\[
B(s,k,x) \iff d_\ca{Y}(\pi(x),r^\ca{Y}_{(s)_0}) \leq \pq{{(s)_1}}.
\]
Clearly $B$ is $\del$ and closed. Let $\fcode^{\ca{X}}(\alpha)$ be compact. Then for all $s$ we have
\begin{align*}
u(\alpha)(s) = 0 
\iff& \ \pi^{-1}[N(\ca{Y},s)] \cap \fcode^{\ca{X}}(\alpha) \neq \emptyset \\
\iff& \ (\exists x)[ x \in \fcode^\ca{X}(\alpha) \ \& \ d_\ca{Y}(\pi(x),r^\ca{Y}_{(s)_0}) < \pq{{(s)_1}}]\\
\iff& \ (\exists k)(\exists x)[x \in \fcode^\ca{X}(\alpha) \ \& \ d_\ca{Y}(\pi(x),r^\ca{Y}_{(s)_0}) \leq \pq{s} - 2^{-k}]\\
\iff & \ (\exists k)(\exists x)[x \in \fcode^\ca{X}(\alpha) \cap B(s,k)]\\
\iff& \ (\exists k)(\exists x \in \del(\alpha))[x \in \fcode^\ca{X}(\alpha) \cap B(s,k)];
\end{align*}
where the latter of the preceding equivalences follows from the fact that the set $\fcode^\ca{X}(\alpha) \cap B(s,k)$ is a $\del(\alpha)$ closed subset of the compact set $\fcode^\ca{X}(\alpha)$, and so from \cite[4F.15]{yiannis_dst} it contains a $\del(\alpha)$-point whenever it is not empty. The required sets $P$, $Q$ are finite Boolean combinations of the right-hand side of the latter two equivalences.

Now assume for the remaining of this proof that the pre-image of every closed ball in \ca{Y} under $u$ is compact. Then the preceding equivalences are valid for all $\alpha \in \baire$, since the set $\fcode^\ca{X}(\alpha) \cap B(s,k)$ is a $\del(\alpha)$ closed subset of the compact set $B(s,k)$, and the same argument applies. This shows that $u$ is $\del$-recursive.

Finally we show that $u$ is also $\bolds^0_3$-measurable. From above we have for all $\alpha, s$ that
\begin{align*}
\pi^{-1}[N(\ca{Y},s)] \cap \fcode^{\ca{X}}(\alpha) = \emptyset  
\iff& \
(\forall k)(\forall x)[d_\ca{Y}(\pi(x),r^\ca{Y}_{(s)_0}) \leq \pq{s} - 2^{-k} \ \longrightarrow \ x \not \in \fcode^\ca{X}(\alpha)] \\
\iff & \
(\forall k)(\forall x)[x \in B(s,k) \ \longrightarrow \ (\exists n)[x \in N(\ca{X},\alpha(n))]]\\
\iff& \
(\forall k)(\exists n_1,\dots,n_{m(k)})[B(s,k) \subseteq \cup_{i=1}^{m(k)} N(\ca{X},\alpha(n_i))],
\end{align*}
where in the last equivalence we used the compactness of $B(s,k)$. It is clear that for all $w = (n_1,\dots,n_m) \in \om^{< \om}$ and all $s,k \in \om$ the set
\[
C_{w,s,k} := \set{\alpha \in \baire}{B(s,k) \subseteq \cup_{i=1}^{m} N(\ca{X},\alpha(n_i))}
\]
is a clopen subset of $\baire$. Hence for all $s \in \om$ the pre-image 
\[
u^{-1}[\set{\beta \in \baire}{\beta(s) = 1}] = \cap_{k \in \om}\cup_{w \in \om^{<\om}\setminus \{\emptyset\}}\; C_{w,s,k}
\]
is a $\boldp^0_2$ subset of $\baire$. It follows that $u$ is $\bolds^0_3$-measurable. 

To show that $u$ is $\Sigma^0_3(\ep)$-recursive for some $\ep \in \del$ we apply the \emph{Louveau Separation} \cite{louveau_a_separation_theorem_for_sigma_sets} to the neighborhood diagram relation $R^u \subseteq \baire \times \om$ given by 
\[
R^u(\alpha,s) \iff u(\alpha) \in N(\baire,s).
\]
The latter is $\del$ and $\bolds^0_3$ since $u$ is $\del$-recursive and $\bolds^0_3$-measurable, and so from the Louveau Separation $R^u$ is $\Sigma^0_3(\ep)$ for some $\ep \in \del$.
\end{proof}

\begin{lemma}
\label{lemma transition from closed code to code of convex hull}
For all $N \geq 1$ there is some $\ep \in \baire$ and a $\Sigma^0_3(\ep)$-recursive function $v: \om \times \baire \to \baire$ such that $v(m,\alpha)$ is a closed code for the convex hull of the compact set $\fcode^{\R^N}(\alpha) \cap [-m,m]^N$ \tu{(}even if the latter is the empty set\tu{)}, \ie
\[
\fcode^{\R^N}(v(m,\alpha)) = H\left(\fcode^{\R^N}(\alpha) \cap [-m,m]^N\right),
\] 
for all $(m,\alpha)$.
\end{lemma}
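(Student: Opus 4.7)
The plan is to realise the convex hull of a compact set as a continuous image of a compact set via Caratheodory's theorem, and then invoke Lemma \ref{lemma code of continuous image of compact}.

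I fix the compact recursive Polish space $\ca{X} = \Delta^N \times ([-1,1]^N)^{N+1}$, where $\Delta^N = \set{\lambda \in [0,1]^{N+1}}{\sum_{i=0}^N \lambda_i = 1}$ is the standard simplex, a recursive closed subset of $[0,1]^{N+1}$. I define the recursive continuous map $\pi : \ca{X} \to \R^N$ by $\pi(\lambda, x_0, \ldots, x_N) = \sum_{i=0}^N \lambda_i x_i$; its range lies in $[-1,1]^N$. Since $\ca{X}$ is compact, the pre-image under $\pi$ of every closed ball in $\R^N$ is closed in $\ca{X}$ and hence compact, so the strong form of Lemma \ref{lemma code of continuous image of compact} delivers a $\Sigma^0_3(\ep)$-recursive function $u(\pi): \baire \to \baire$, for some $\ep \in \del$, such that $u(\pi)(\gamma)$ is a closed code in $\R^N$ for $\pi[\fcode^{\ca{X}}(\gamma)]$ whenever $\fcode^{\ca{X}}(\gamma)$ is compact.

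Next, from the pair $(m, \alpha)$ I build recursively a closed code $\tilde{\beta}(m, \alpha) \in \baire$ for the compact set $\Delta^N \times (K'_{m,\alpha})^{N+1} \subseteq \ca{X}$, where, writing $m' = \max(m,1)$, I put $K'_{m,\alpha} = (1/m') \cdot \bigl(\fcode^{\R^N}(\alpha) \cap [-m,m]^N\bigr) \subseteq [-1,1]^N$. The ingredients needed --- intersecting $\fcode^{\R^N}(\alpha)$ with the recursive closed cube $[-m,m]^N$, rescaling a closed subset of $\R^N$ by a positive rational factor, and taking a product with the recursive closed set $\Delta^N$ --- all act recursively on closed codes by standard manipulations. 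Caratheodory's theorem gives $\pi\bigl[\Delta^N \times (K'_{m,\alpha})^{N+1}\bigr] = H(K'_{m,\alpha}) = (1/m') \cdot H\bigl(\fcode^{\R^N}(\alpha) \cap [-m,m]^N\bigr)$, so $u(\pi)(\tilde{\beta}(m, \alpha))$ is a closed code for the rescaled convex hull.

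It remains to scale back. Multiplication by $m'$ acts recursively on closed codes: each open ball $m' \cdot N(\R^N, s) = B(m' \cdot r, m' \cdot q)$ is the union of the basic neighbourhoods whose closures it contains, and this union is enumerable recursively in $m'$ and $s$. Composing the recursive pre-processing $(m, \alpha) \mapsto \tilde{\beta}(m,\alpha)$ with $u(\pi)$ and the recursive post-processing ``multiplication by $m'$'' yields a $\Sigma^0_3(\ep)$-recursive $v$ satisfying $\fcode^{\R^N}(v(m,\alpha)) = H\bigl(\fcode^{\R^N}(\alpha) \cap [-m,m]^N\bigr)$; the edge case of empty intersection is handled automatically, since $u(\pi)$ then outputs a code for the empty set.

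The main obstacle --- and the reason for the initial rescaling --- is to keep the ambient space $\ca{X}$ of Lemma \ref{lemma code of continuous image of compact} compact, so that its strong $\Sigma^0_3(\ep)$-recursive conclusion is available. Using the more natural non-compact space $\Delta^N \times (\R^N)^{N+1}$ with the same formula for $\pi$ would preserve property (1) of that lemma but only give the weaker property (2), namely that $u$ is computed by a $\sig$ and a $\pii$ relation on the codes of compact sets; that weaker information does not furnish a $\Sigma^0_3(\ep)$-recursive $v$ valid on all inputs.
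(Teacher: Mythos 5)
Your proof is correct, and it takes a genuinely different and shorter route than the paper's.

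The paper works with the non-compact ambient space $\ca{X} = [0,1]^{N+1} \times (\R^N)^{N+1}$, and because the preimages of closed balls under $\pi$ are not compact there, the strong form of Lemma~\ref{lemma code of continuous image of compact} cannot be applied directly. The paper therefore introduces the exhaustion $\ca{X} = \cup_m \ca{X}_m$ by the compact truncations $\ca{X}_m = [0,1]^{N+1} \times ([-m,m]^N)^{N+1}$, shows via a sequence of index-translation claims that the relation $u_m(\alpha)(t) = 0$ is $\del$ uniformly in $m$, applies Louveau separation to get a $\Sigma^0_2((\ep)_m)$ presentation of each section, and then uses the $\Delta$-Selection Principle to collapse the countably many parameters $\ep_m \in \del$ into a single $\ep \in \del$, from which the $\Sigma^0_3(\ep)$-recursiveness of $u$ is assembled by an explicit computation. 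Your rescaling trick eliminates this entire apparatus: by mapping the data into the fixed compact cube $[-1,1]^N$ first (and scaling back afterwards), you place yourself in the hypothesis of the strong form of Lemma~\ref{lemma code of continuous image of compact} for a single fixed compact ambient space, and the $\Sigma^0_3(\ep)$-recursiveness comes out in one shot. The cost is only the recursive pre- and post-processing on closed codes (scaling by $1/m'$ and $m'$, taking the product with $\Delta^N$, intersecting with $[-m,m]^N$), all of which are indeed routine closed-code manipulations; and composing recursive functions before and after a $\Sigma^0_3(\ep)$-recursive one preserves $\Sigma^0_3(\ep)$-recursiveness by substitution. One small point worth writing out in a full version is the index translation between the neighborhood bases of $[-1,1]^N$ (respectively $\Delta^N \times ([-1,1]^N)^{N+1}$) and of $\R^N$ when you build $\tilde\beta(m,\alpha)$; this is the same kind of re-indexing the paper carries out explicitly in its Claims~4--5, and it is entirely mechanical. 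Also note that you correctly state the Carath\'eodory parametrization over the simplex $\Delta^N$, which is the precise form needed for $\pi\bigl[\Delta^N\times K^{N+1}\bigr]=H(K)$.
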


\begin{proof}
We consider the recursively presented metric spaces
\begin{align*}
\ca{X} =  [0,1]^{N+1} \times 
\underbrace{\R^N \times \dots \times \R^N}_{(N+1)\text{-times}}\;,
\quad
\ca{Y} = \ \R^N\;,
\end{align*}
and the recursive function
\[
\pi: \ca{X} \to \ca{Y}: (a_1,\dots,a_{N+1},x_1,\dots,x_{N+1}) \mapsto \sum_{i=1}^{N+1} a_i \cdot x_i.
\]
Notice the from the Caratheodory's Theorem the convex hull $H(K)$ of a compact set $K \subseteq \R^N$ is the image of $[0,1]^{N+1} \times K^{N+1}$ under $\pi$ (which also shows that $H(K)$ is compact).

We consider the function $u(\pi) \equiv u$, which corresponds to $\pi$ as in the statement of Lemma \ref{lemma code of continuous image of compact}. We will show the following:
\begin{list}{}{}
\item[(a)] There is a recursive function $v_0: \om \times \baire \to \baire$ such that $v_0(m,\alpha)$ is an $\ca{X}$-closed code for $[0,1]^{N+1} \times \left(\fcode^{\R^N}(\alpha) \cap [-m,m]^N\right)^{N+1}$ for all $(m,\alpha)$.
\item[(b)] The preceding function $u$ is $\Sigma^0_3(\ep)$-recursive for some $\ep \in \del$.
\footnote{It is actually easy to verify the weaker assertion that the function $v = u \circ v_0$ is $\del$-recursive using (a): the set $\fcode^{\ca{X}}(v_0(m,\alpha))$ is compact for all $m,\alpha$, and so from (2) of Lemma \ref{lemma code of continuous image of compact} the function $u$ is $\del$-recursive. Since $v_0$ is recursive, it follows that $v$ is $\del$-recursive.}
\end{list}
If we show these two assertions we will be done, for then we can take the $\Sigma^0_3(\ep)$-recursive function $v = u \circ v_0$, and using (1) of Lemma \ref{lemma code of continuous image of compact} we will have
\begin{align*}
\fcode^{\R^N}(v(m,\alpha))
=& \ \fcode^{\R^N}(u (v_0(m,\alpha)))\\ 
=& \ \pi[\fcode^{\ca{X}}(v_0(m,\alpha))]\\
=& \ \pi \left[[0,1]^{N+1} \times \left(\fcode^{\R^N}(\alpha) \cap [-m,m]^N\right)^{N+1}\right]\\
=& \ H\left(\fcode^{\R^N}(\alpha) \cap [-m,m]^N\right).
\end{align*}

The assertions (a) and (b) are proved in a series of claims. Before we proceed we find it useful to outline the main steps. Assertion (a) is straightforward, we start from a code for $[-m,m]$, pass to a code for $[-m,m]^N$ and then to a code for $\fcode^{\R^N}(\alpha) \cap [-m,m]^N$ uniformly on $m,\alpha$. This requires a long list of computations, which we give below in some (but not full) detail. 

Regarding (b), we consider for all $m \in \om$ the recursively presented metric space
\[
\ca{X}_m = \ca{X} \cap \left([0,1]^{N+1} \times \underbrace{[-m,m]^N \times \dots \times [-m,m]^N}_{(N+1)\text{-times}}\right).
\]
It is clear that $\ca{X} = \cup_{m \in \om} \ca{X}_m$. Let $\pi_m$ be the restriction of $\pi$ on \ca{X} and  $u(\pi_m) \equiv u_m$ be the function, which corresponds to $\pi_m$ as in the statement of Lemma \ref{lemma code of continuous image of compact}.

We fix some $m \in \om$ for the discussion. Since $\ca{X}_m$ is compact and $\pi_m$ is continuous, the pre-image of every closed ball of $\R^N$ under $\pi_m$ is compact. Notice that from Lemma \ref{lemma code of continuous image of compact} the function $u_m$ is $\Sigma^0_3(\gamma_m)$-recursive for some $\gamma_m \in \del$. We need however something stronger for (b). As it is evident from the proof of the latter lemma the set $L: = \set{(\alpha,t)}{ u(\alpha)(t) = 0}]$ is $\bolds^0_2$ and $\del$. Hence from the Louveau Separation there is some $\delta_m \in \del$ such that $L$ is a $\Sigma^0_2(\delta_m)$ set.

From the properties of our universal systems there is some $\delta_m$-recursive point $\ep_m \in \baire$ such that for all $\alpha, t$ it holds
\[
u_m(\alpha)(t) = 0 \iff \scode^{\baire \times \om}_2(\ep_m,\alpha,t).
\]
Notice that each $\ep_m$ is a $\del$-point. 

Our goal is to replace all the preceding $\ep_m$'s with $(\ep)_m$ for some $\ep \in \del$. To do this we need to know that the condition ``$u_m(\alpha)(t) = 0$" is verified $\del$-uniformly on $m,\alpha,t$, \ie that the set $Q : = \set{(m,t,\alpha) \in \om^2 \times \baire}{u_m(\alpha)(t) = 0}$ is $\del$. (This is Claim 6 below.) If we achieve this, then we define $P \subseteq \om \times \baire$ by
\[
P(m,\beta) \iff (\forall \alpha,t)[Q(m,t,\alpha) \longleftrightarrow \scode^{\baire \times \om}_2(\beta,\alpha,t)].
\]
Clearly $P$ is $\pii$, and for all $m \in \baire$ there is $\beta \in \del$ (namely $\ep_m$) such that $P(m,\beta)$. From the $\Delta$-Selection Principle \cite[4D.6]{yiannis_dst} there is a $\del$-recursive function $U: \om \to \baire$ such that $P(m,U(m))$ for all $m$. We then take an $\ep \in \del$ with $(\ep)_m = U(m)$ for all $m$, and we have
\[
u_m(\alpha)(t) = 0 \iff Q(m,t,\alpha) \iff \scode^{\baire \times \om}_2((\ep)_m,\alpha,t)
\]
for all $\alpha,t, m$, ( in particular every function $u_m$ is $\Sigma^0_3(\ep)$-recursive). From this we can show that $u$ is also $\Sigma^0_3(\ep)$-recursive: first we consider a recursive function $s \in \om \mapsto t(s) \in \om$ such that $N(\baire,t(s)) = \set{\beta \in \baire}{\beta(s) = 0}$, and using the recursive function $f$ of Claim 5 (see below) we will have:
\begin{align*}
u(\alpha)(s) = 0 
\iff& \ \pi^{-1}[N(\ca{Y},s)] \cap \fcode^\ca{X}(\alpha) \neq \emptyset\\
\iff& \ (\exists m)[\pi^{-1}[N(\ca{Y},s)] \cap \fcode^\ca{X}(\alpha) \cap \ca{X}_m \neq \emptyset]\\
\iff& \ (\exists m)[\pi_m^{-1}[N(\ca{Y},s)] \cap \fcode^\ca{X}(\alpha) \cap \ca{X}_m \neq \emptyset]\\
\iff& \ (\exists m)[\pi_m^{-1}[N(\ca{Y},s)] \cap \fcode^{\ca{X}_m}(f(m,\alpha)) \neq \emptyset]\\
\iff& \ (\exists m)[u_m(f(m,\alpha))(s) =0]\\
\iff& \ (\exists m)\scode^{\baire \times \om}_2((\ep)_m,f(m,\alpha),t(s)).
\end{align*}
Hence the condition $``u(\alpha) \in N(\baire,k)$" can be expressed as a finite boolean combination of $\Sigma^0_2(\ep)$ sets, and therefore $u$ is $\Sigma^0_3(\ep)$-recursive. Hence Claim 6 is all we need to establish assertion (b).\smallskip

Now we proceed to the claims.\smallskip

\emph{Claim 1.} There is a recursive function $v_{1}: \om \times \baire \to \baire$ such that whenever $\alpha \in \baire$ is such that $(\alpha)_m$ is a code of the closed set $[-m,m]$ in $\R$, then $v_1(m,\alpha)$ is a closed code for $[-m,m]^N$ in $\R^N$.

\emph{Proof of the claim.} Fix a recursive function $n \mapsto c(n)$ such that $(-n,n) = N(\R,c(n))$ for all \n. Using that the finite products of basic neighborhoods is a finite product witnessed by the recursive function $\fprod$ (see (\ref{equation product of neighborhoods}) in the Introduction), it follows that
\begin{align*}
\R^N \setminus [-m,m]^N 
=& \ \bigcup_{i=0}^{N-1} \left(\R \times \dots \times \underbrace{\left(\R \setminus [-m,m]\right)}_{i-\text{th place}} \times \dots \times \R\right)\\
=& \ \bigcup_{i=0}^{N-1}\bigcup_{n \in \om}(-n,n) \times \dots \times (\R \setminus [-m,m]) \times \dots \times (-n,n)\\
=& \ \bigcup_{i=0}^{N-1}\bigcup_{n,j \in \om}N(\R,c(n)) \times \dots \times N(\R,(\alpha)_m(j)) \times \dots \times N(\R,c(n))\\
=& \ \bigcup_{i=0}^{N-1}\bigcup_{n,j \in \om}\bigcup_{k \in \om} N(\R^N, \fprod(\langle c(n), \dots, \underbrace{(\alpha)_m(j)}_{i-\text{th place}}, \dots, c(n)\rangle)(k))\\
=& \ \bigcup_{s \in \om}N(\R^N,v_1(m,\alpha)(s)),
\end{align*}
where
\[
v_1(m,\alpha)(s) =
\begin{cases}
\fprod(\langle c(n), \dots, \underbrace{(\alpha)_m(j)}_{i-\text{th place}}, \dots, c(n)\rangle)(k), & \ \text{if} \ s= \langle i,n,j,k \rangle \ \text{and} \ 0 \leq i \leq N-1\\
0, & \ \text{else}.
\end{cases}
\]
Clearly $v_1$ is recursive and $v_1(m,\alpha)$ is a closed code for $[-m,m]^N$ in $\R^N$.\smallskip

\emph{Claim 2.} There is a recursive function $v_2: \om \times \baire \to \baire$ such that
\[
\fcode^{\R^N}(\alpha) \cap [-m,m]^N = \fcode^{\R^N}(v_2(m,\alpha)) \quad \text{for all} \ m,\alpha.
\]
\emph{Proof of the claim.} 
It is not hard to find a recursive code for $[-m,m]$ uniformly on $m$, \ie there is some recursive $\delta \in \baire$ such that $(\delta)_m$ is a closed code for $[-m,m]$ in $\R$ for all $m \in \om$.

Recall that there is a recursive function $v_{\wedge}: \baire \times \baire \to \baire$ such that
\[
\fcode^{\R^N}(\alpha) \cap \fcode^{\R^N}(\beta) = \fcode^{\R^N}(v_{\wedge}(\alpha,\beta))
\]
for all $\alpha$, $\beta$.  So we take the recursive function
\[
v_2: \om \times \baire \to \baire: (m,\alpha) \mapsto v_{\wedge}(\alpha,v_1(m,\delta)),
\]
where $v_1$ is as in Claim 1. Since $\delta$ is recursive, the function $v_2$ is recursive as well. Moreover
\[
\fcode^{\R^N}(v_1(m,\alpha)) = \fcode^{\R^N}(v_{\wedge}(\alpha,v_1(m,\delta)))= \fcode^{\R^N}(\alpha) \cap \fcode^{\R^N}(v_1(m,\delta)) = \fcode^{\R^N}(\alpha) \cap [-m,m]^N.
\]
\emph{Claim 3.} There is a recursive function $v_3: \baire \to \baire$ such that $v_3(\beta)$ is a closed code for $[0,1]^{N+1} \times \left(\fcode^{\R^N}(\beta)\right)^{N+1}$ in \ca{X} for all $\beta \in \baire$.

The \emph{proof of Claim 3} is similar to the one of Claim 1 and we omit it.\smallskip

\emph{Proof of assertion} (a). We consider the recursive function
\[
v_0: \om \times \baire \to \baire: (m,\alpha) \mapsto v_3(v_2(m,\alpha)),
\]
and we have
\[
\fcode^{\ca{X}}(v_0(m,\alpha)) = [0,1]^{N+1} \times \left(\fcode^{\R^N}(v_2(m,\alpha))\right)^{N+1} = [0,1]^{N+1} \times \left(\fcode^{\R^N}(\alpha) \cap [-m,m]^{N}\right)^{N+1}.
\]
Now we proceed to the claims need for the proof of (b). Notice that the set $\fcode^{\ca{X}_m}(\alpha)$ (which is also a closed subset of \ca{X}) might not the same as $\fcode^\ca{X}(\alpha)$. This is because the $s$-th basic neighborhood of $\ca{X}_m$ is not necessarily the intersection of the $s$-th basic neighborhood of $\ca{X}$ with $\ca{X}_m$. This boils down to the fact that the basic neighborhoods of $\ca{X}_m$ are indexed according to some enumeration of the rationals in the interval $[-m,m]$, and the latter enumeration is clearly different from any enumeration of all rationals. Of course this is just an indexing issue:

\emph{Claim 4.} There are recursive functions $g,h: \om \times \baire \to \baire$ such that 
\[
\fcode^{\ca{X}_m}(\alpha) = \fcode^\ca{X}(g(m,\alpha)) \cap \ca{X}_m = \fcode^\ca{X}(h(m,\alpha))
\]
for all $m,\alpha$.

\emph{Proof of the claim.} By our representation of the intervals of the form $[-m,m]$ there is a recursive function $l: \om^2 \to \om$ such that $\dense{[-m,m]}{i} = \dense{\R}{l(m,i)}, \text{for all} \ m,i$.
We then have
\begin{align*}
N([-m,m],s) 
=& \ \set{x \in [-m,m]}{|x - \dense{[-m,m]}{(s)_0}| < \pq{{(s)_1}}} \\
=& \ \set{x \in \R}{|x - \dense{\R}{l(m,(s)_0)}| < \pq{{(s)_1}}} \cap [-m,m]\\
=& \ N(\R,\langle l(m,(s)_0), (s)_1 \rangle) \cap [-m,m].
\end{align*}
By taking products we can find a recursive function $p': \om^2 \to \om$ such that
\[
N([-m,m]^N,s) = N(\R^N, p'(m,s)) \cap [-m,m]^N, \quad \text{for all} \ m,s;
\]
and by repeating this procedure there is a recursive function $p: \om^2 \to \om$ such that
\begin{align*}
& \ N\left([0,1]^{N+1} \times \underbrace{[-m,m]^{N} \times \dots \times [-m,m]^N}_{(N+1)\text{-times}}\;, \; t\right)\\
=& \ N\left([0,1]^N \times \underbrace{\R^N \times \dots \R^N}_{(N+1)\text{-times}}\;, \; p(m,t)\right) \cap \underbrace{[-m,m]^{N} \times \dots \times [-m,m]^{N}}_{(N+1)\text{-times}},
\end{align*}
in other words
\[
N(\ca{X}_m,t) = N(\ca{X},p(m,t)) \cap \ca{X}_m, \quad \text{for all} \ m,t.
\]
Hence for all $(\vec{a},\vec{x}) \in \ca{X}_m$ we have
\begin{align*}
(\vec{a},\vec{x}) \not \in \fcode^{\ca{X}_m}(\alpha)
\iff& \ (\exists i)[(\vec{a},\vec{x}) \in N(\ca{X}_m,\alpha(i))]\\
\iff& \ (\exists i) [(\vec{a},\vec{x}) \in N(\ca{X},p(m,\alpha(i)))]\\
\iff& \ (\vec{a},\vec{x}) \not \in \fcode^{\ca{X}}(g(m,\alpha)),
\end{align*}
where $g(m,\alpha)(i) = p(m,\alpha(i))$. Thus $ \fcode^{\ca{X}_m}(\alpha) =  \fcode^{\ca{X}}(g(m,\alpha)) \cap \ca{X}_m$ and $g$ is recursive.

To obtain the function $h$ we need to identify a code for $\ca{X}_m$ as a closed subset of \ca{X} uniformly on $m$. This is straightforward. First there is a recursive function $v_4: \om \times \baire \to \baire$ such that whenever $(\beta)_m$ is a closed code for $[-m,m]^N$ in $\R^N$ then $v_4(m,\beta)$ is a closed code for $[0,1]^{N+1} \times \underbrace{[-m,m]^N \times \dots \times [,m,m]^N}_{(N+1)\text{-times}} = \ca{X}_m$ in \ca{X}. (This is proved exactly as in Claim 1.) 

Now, as in the proof of Claim 2, we consider a recursive $\delta \in \baire$ such that $(\delta)_m$ is a closed code for $[-m,m]$ in $\R$ for all $m \in \om$. We define
\[
v_5: \om \to \baire: m \mapsto v_4(v_1(m,\delta)),
\]
where $v_1$ is as in Claim 1. In particular $v_1(m,\delta)$ is a code for the closed set $[-m,m]^N$ in $\R^N$, and hence $v_4(v_1(m,\delta)) = v_5(m)$ is a code for $\ca{X}_m$ in \ca{X}.

Using the preceding functions $g$ and $v_5$ we have
\[
\fcode^{\ca{X}}(g(m,\alpha)) \cap \ca{X}_m = \fcode^{\ca{X}}(g(m,\alpha)) \cap \fcode^{\ca{X}}(v_5(m)) = \fcode^{\ca{X}}(v^\ca{X}_{\wedge}(g(m,\alpha),v_5(m))),
\]
where $v^\ca{X}_{\wedge}$ is as in $v_{\wedge}$ with the space $\ca{X}$ in the place of $\R^N$. So we take the recursive function $h(m,\alpha) = g(m,\alpha),v_5(m))$. This finishes the proof of Claim 4.\smallskip

We also need a uniform way for the converse direction of Claim 4, \ie a way to pass from closed sets of the form $F^\ca{X}(\beta) \cap \ca{X}_m$ to closed codes in $\ca{X}_m$.\smallskip

\emph{Claim 5.} There is a recursive function $f: \om \times \baire \to \baire$ such that
\[
\fcode^{\ca{X}}(\alpha) \cap \ca{X}_m = \fcode^{\ca{X}_m}(f(m,\alpha)), \quad \text{for all} \ m,\alpha.
\]
The \emph{proof of Claim 5} is immediate from the preceding proof: we use the functions $v^\ca{X}_{\wedge}$ and $v_5$ - the only difference is that now we have $\alpha$ instead of $g(m,\alpha)$.\smallskip

\emph{Claim 6.} The relation $Q \subseteq \om^2 \times \baire$ defined by
\[
Q(m,t,\alpha) \iff u_m(\alpha) \in N(\baire,t)
\]
is \del.

\emph{Proof of claim.} Using the function $h$ of Claim 4 we compute
\begin{align*}
u_m(\alpha)(s) = 0 
\iff& \ \pi_m^{-1}[N(\ca{Y},s)] \cap \fcode^{\ca{X}_m}(\alpha) \neq \emptyset\\
\iff& \ \pi^{-1}[N(\ca{Y},s)] \cap \fcode^{\ca{X}_m}(\alpha) \neq \emptyset\\
\iff& \ \pi^{-1}[N(\ca{Y},s)] \cap \fcode^{\ca{X}}(h(m,\alpha)) \neq \emptyset\\
\iff& \ u(h(m,\alpha))(s) = 0.
\end{align*}
Since the set $\fcode^\ca{X}(h(m,\alpha)) = \fcode^{\ca{X}_m}(\alpha)$ is a compact subset of $\ca{X}$, it follows from (2) of Lemma \ref{lemma code of continuous image of compact} that the function $(m,\alpha) \mapsto (u \circ h)(m,\alpha)$ is \del-recursive. Hence the condition ``$u_m(\alpha)(s) = 0 $" is \del, and therefore $Q$ is \del \ as well.\smallskip

As shown above assertion (b) is a consequence of Claim 6. This finishes the proof of the lemma. 
\end{proof}

\begin{lemma}
\label{lemma transition from closed code to distance from convex hull}
Suppose that $N \geq 1$ and 
\[
\ca{A} = \set{(m,\alpha) \in \om \times \baire }{\fcode^{\R^N}(\alpha) \cap [-m,m]^N \neq \emptyset}.
\]
Then for some $\ep \in \del$ we have the following:
\begin{list}{}{}
\item[\tu{(}a\tu{)}] The set $\ca{A}$ is $\Pi^0_1(\ep)$.
\item[\tu{(}b\tu{)}] The function
\[
f_d:  \ca{A} \times \R^N \to \R: (m,\alpha,x) \mapsto d\left(x,H\left(\fcode^{\R^N}(\alpha) \cap [-m,m]^N\right)\right)
\]
\tu{(}where $d$ is the distance in $\R^N$ with respect to the maximum norm\tu{)} is computed by a is $\Sigma^0_4(\ep)$ set.
\end{list}
\end{lemma}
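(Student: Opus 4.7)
The plan is to derive (a) directly from the compactness of $[-m,m]^N$, and to derive (b) by combining the closed code $v(m,\alpha)$ for the compact convex hull provided by Lemma \ref{lemma transition from closed code to code of convex hull} with the same compactness principle applied now to closed max-norm balls.

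For (a), observe that $\fcode^{\R^N}(\alpha) \cap [-m,m]^N = \emptyset$ iff $[-m,m]^N \subseteq \bigcup_s N(\R^N, \alpha(s))$, and by compactness of the cube this is equivalent to the $\Sigma^0_1$ statement
\[
(\exists n, k_0, \dots, k_n)\Bigl[[-m,m]^N \subseteq \bigcup_{i \leq n} N(\R^N, \alpha(k_i))\Bigr],
\]
whose bracketed condition reduces to a containment check among finitely many rational boxes and hence is recursive in $(m, \alpha(k_0), \dots, \alpha(k_n))$. Therefore $\ca{A}$ is $\Pi^0_1$, and a fortiori $\Pi^0_1(\ep)$ for every $\ep$.

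For (b), take $\ep \in \del$ and the $\Sigma^0_3(\ep)$-recursive function $v: \om \times \baire \to \baire$ from Lemma \ref{lemma transition from closed code to code of convex hull}, so that $\fcode^{\R^N}(v(m,\alpha)) = K(m,\alpha) := H(\fcode^{\R^N}(\alpha) \cap [-m,m]^N) \subseteq [-m,m]^N$, and $K(m,\alpha)$ is compact by Caratheodory's theorem. Since each $N(\R, s)$ is an open rational interval $(a_s, b_s)$, it suffices to place the two relations $d(x, K(m,\alpha)) < r$ and $d(x, K(m,\alpha)) > r$ (for $r \in \Q$) in $\Sigma^0_4(\ep)$ on $\ca{A} \times \R^N$. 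Working in the max norm, the closed ball $\overline{B}(x, r')$ is a closed rational-radius box about $x$ and hence compact, so the finite-subcover argument of (a), applied to $\overline{B}(x, r')$ in place of $[-m,m]^N$ and with $v(m,\alpha)$ in place of $\alpha$, yields
\[
K(m,\alpha) \cap \overline{B}(x, r') = \emptyset \iff (\exists n, k_0, \dots, k_n)\Bigl[\overline{B}(x, r') \subseteq \bigcup_{i \leq n} N(\R^N, v(m, \alpha)(k_i))\Bigr].
\]
With fixed natural numbers $j_i$ in place of $v(m, \alpha)(k_i)$, the containment of a closed rational-radius box about $x$ in a prescribed finite union of rational open boxes is a recursively open condition in the real parameter $x$, witnessed by a rational ball of slightly larger radius around a rational point close to $x$. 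Substituting $j_i = v(m, \alpha)(k_i)$, which is a $\Delta^0_3(\ep)$ condition by the $\Sigma^0_3(\ep)$-recursiveness of $v$, the existential over $\vec{k}$ and $\vec{j}$ keeps us in $\Sigma^0_3(\ep)$, so ``$K(m, \alpha) \cap \overline{B}(x, r') = \emptyset$'' is $\Sigma^0_3(\ep)$. Its negation is $\Pi^0_3(\ep)$, and hence
\[
d(x, K(m, \alpha)) < r \iff (\exists r' \in \Q \cap (0, r))[K(m, \alpha) \cap \overline{B}(x, r') \neq \emptyset]
\]
is $\Sigma^0_4(\ep)$; while $d(x, K(m, \alpha)) > r$ is equivalent, by compactness of $K(m,\alpha)$, to $K(m,\alpha) \cap \overline{B}(x, r) = \emptyset$, already $\Sigma^0_3(\ep) \subseteq \Sigma^0_4(\ep)$. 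Combining the two half-spaces, the neighborhood diagram of $f_d$ lies in $\Sigma^0_4(\ep)$.

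The only delicate step is the uniform recursive-openness statement in the third paragraph, namely that the containment of a real-centered closed max-norm ball in a prescribed finite union of rational open boxes is $\Sigma^0_1$ in the center (with $r'$ and the box indices as parameters); this is a routine rational-witnessing exercise, using that if the inclusion holds then it persists for a ball of slightly larger rational radius around a nearby rational point.
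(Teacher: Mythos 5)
Your proof is correct, but it takes a genuinely different route from the paper's. For (a), the paper first argues that $\ca{A}$ is $\del$ (using $\pii$-selection of $\del(\alpha)$-points from non-empty compact $\del(\alpha)$ sets) and separately that $\ca{A}$ is topologically closed, then invokes the Louveau Separation to land in $\Pi^0_1(\ep)$ for some $\ep\in\del$; your finite-subcover argument shows $\ca{A}$ is \emph{lightface} $\Pi^0_1$ outright, which is sharper and skips Louveau entirely for this part. For (b), the paper proceeds by showing that the auxiliary function $g(\beta,x)=d(x,\fcode^{\R^N}(\beta))$ is lower semicontinuous (hence $\bolds^0_2$-measurable) on the compact codes, so that $f_d = g\circ v$ is $\bolds^0_4$-measurable and $\del$-computed, and then applies Louveau once more; you instead compute the arithmetical complexity of $f_d$'s neighborhood diagram directly, reducing ``$d(x,K)>r$'' to a single finite-subcover statement (giving $\Sigma^0_3(\ep)$) and ``$d(x,K)<r$'' to an existential over shrinking closed balls (giving $\Sigma^0_4(\ep)$), relying only on the $\Sigma^0_3(\ep)$-recursiveness of the hull-code function $v$ from the previous Lemma. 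What the paper's route buys is a systematic template (compute $\del$-ness plus a boldface bound, then separate) reused throughout the section, together with the mildly interesting intermediate fact that the distance-to-compact-closed-set function is lower semicontinuous on its natural domain; what your route buys is a more elementary, quantifier-counting argument and, for (a), the stronger lightface conclusion. Two small points you gloss over but which do hold: the decidability of ``$[-m,m]^N\subseteq\bigcup_{i\le n}N(\R^N,j_i)$'' (and likewise with a closed rational box in place of $[-m,m]^N$) follows from the fact that the complement $\bigcap_i(\R^N\setminus N(\R^N,j_i))$ is a computable finite union of closed rational boxes, so non-emptiness of the intersection with a closed rational box reduces to comparing finitely many rationals; and for the uniform recursive-openness in $x$, the rational witness $(q,r'')$ with $d(x,q)<r''-r'$ and $\overline{B}(q,r'')\subseteq U$ is exactly the right formulation, verified as you indicate.
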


\begin{proof}
It suffices to show that the set $\ca{A}$ is closed \del, and that the function $f_d$ is computed by a set which is $\bolds^0_4$ and \del. By an application of the Louveau Separation there are $\ep_1, \ep_2 \in \del$ such that $\ca{A}$ is $\Pi^0_1(\ep_1)$ and $f_d$ is computed by a $\bolds^0_4(\ep_2)$ set. By taking $\ep = (\ep_1(0),\ep_2(0),\ep_1(1),\ep_2(1),\dots)$ we obtain the result.

First we show that the set $\ca{A}$ is $\del$: it is evident that $\ca{A}$ is $\sig$ and by using again that non-empty $\del(\alpha)$ compact sets contain a $\del(\alpha)$ point, we see that
\[
(m,\alpha) \in \ca{A} \iff (\exists y \in \del(\alpha))[\fcode^{\R^N}(\alpha,y) \ \& \ y \in [-m,m]^N].
\] 
The right-hand-side of the preceding equivalence defines of course a $\pii$ relation. 

Next we show that $\ca{A}$ is closed. We fix some $m \in \om$. Clearly it is enough to show that the section $\ca{A}_m$ is closed. Since $[-m,m]^N$ is compact we have that
\begin{align*}
\fcode^{\R^N}(\alpha) \cap [-m,m]^N \neq \emptyset 
\iff& \ \left(\cap_{\n} \left(\R^N \setminus N(\R^N,\alpha(n))\right)\right) \cap [-m,m]^N \neq \emptyset\\
\iff& \ (\forall n)[ \cap_{i \leq n} \left((\R^N \setminus N(\R^N,\alpha(i))\right) \cap [-m,m]^N \neq \emptyset].
\end{align*}
For all \n, the set $C^m_n : = \set{\alpha}{\cap_{i \leq n} \left((\R^N \setminus N(\R^N,\alpha(i))\right) \cap [-m,m]^N \neq \emptyset}$ is easily clopen, and so $\ca{A}_m = \cap_n C^m_n$ is closed.\smallskip

Now we deal with the function $f_d$. We consider the sets 
\[
\ca{B}_0 = \set{\beta \in \baire}{\fcode^{\R^N}(\beta) \neq \emptyset} \times \R^N, \quad
\ca{B} = \ca{B}_0 \ \cap \ \left(\set{\beta \in \baire}{\fcode^{\R^N}(\beta) \ \text{is compact}} \times \R^N\right)
\]
and the function
\[
g: \ca{B}_0 \to \R: (\beta, x) \mapsto d(x,\fcode^{\R^N}(\beta)).
\]
First we check that $g$ is computed by a $\sig$ and a $\pii$ relation on $\ca{B}$. To see this we compute
\begin{align*}
d(x,\fcode^{\R^N}(\beta)) < \pq{s} 
\iff& \ (\exists y)[ y \in \fcode^{\R^N}(\beta) \ \& \ d(x,y) < \pq{s}]\\
 \iff& \ (\exists y)(\forall n)[y \not \in N(\R^N,n) \ \& \ d(x,y) < \pq{s}].
\end{align*}
If $\fcode^{\R^N}(\beta)$ is compact then the preceding $y$ can be chosen to be $\del(\beta)$. By taking suitable finite Boolean combinations we obtain $\sig$ and $\pii$ relations $R^g_{\Sigma}$, $R^g_{\Pi}$ respectively such that
\[
d(x,\fcode^{\R^N}(\beta)) \in N(\R,s) \iff R^g_{\Sigma}(\beta,x,s) \iff R^g_{\Pi}(\beta,x,s),
\]
for all $(\beta,x,s) \in \ca{B} \times \om$.

Next we show that that the function $g \upharpoonright \ca{B}$ is $\bolds^0_2$-measurable. In fact we show that for all $a \in \R$ and all $(\beta,x) \in \ca{B}$ with $g(\beta,x) > a$ there is $p > 0$ such that $g[B_{\R^N \times \baire}((\beta,x),p) \cap \ca{B}_0]  \subseteq (a,\infty)$. This implies that the set $g^{-1}[(a,\infty)] \cap \ca{B}$ is open in $\ca{B}$, and so the restriction $g \upharpoonright \ca{B}$ of $g$ on \ca{B} is $\bolds^0_2$-measurable $-$ in fact lower semi-continuous.

Fix $(\beta,x) \in \ca{B}$ and $a \in \R$ with $g(\beta,x) > a$. Since the set $\fcode^{\R^N}(\beta)$ is compact non-empty, there is some $y$ in the latter set such that $g(\beta,x) = d(x,\fcode^{\R^N}(\beta)) = \normR{x-y}$. Fix some $r > 0$ such that $a + 2r < \normR{x-y}_{\infty}$.

The set $C:= \overline{B_{\R^N}(x,a+2r)} = \set{w \in \R^N}{\normR{x-w}_{\infty} \leq a + 2r}$ is compact and moreover $C \cap \fcode^{\R^N}(\beta) = \emptyset$. (Else there would be some $y' \in \fcode^{\R^N}(\beta)$ such that $\normR{x - y'}_\infty \leq a +2r < \normR{x-y}_{\infty} = d(x,\fcode^{\R^N}(\beta))$, a contradiction.) Hence $C \subseteq \cup_{\n} N(\R^N,\beta(n))$, and from the compactness of $C$ there is $n_0$ such that $C \subseteq \cup_{i=0}^{n_0}N(\R^N,\beta(i))$.

We consider the open neighborhood $V: = B_{\R^N}(x,r) \times \set{\gamma \in \baire}{(\forall i \leq n_0)[\gamma(i) = \beta(i)]}$ (which can be easily brought to the form $B_{\R^N \times \baire}((\beta,x),p)$ for some $p > 0$). Given $(z,\gamma) \in V \cap \ca{B}_0$ (not necessarily in $\ca{B}$) we have that $B_{\R^N}(z,a+r) \subseteq B_{\R^N}(x,a+2r) \subseteq C$, and so $B_{\R^N}(z,a+r) \subseteq \cup_{i=0}^{n_0} N(\R^N,\gamma(i))$. If it were $g(z,\gamma) < a+r$ then there would be some $y' \in \fcode^{\R^N}(\gamma)$ such that $\normR{z-y'}_{\infty} < a+r$, and so $y$ would be a member of $\cup_{i=0}^{n_0} N(\R^N,\gamma(i)) \subseteq \R^N \setminus \fcode^{\R^N}(\gamma)$, a contradiction. Hence $g(z,\gamma) \geq a+r > a$.
 
Now we consider the function $v: \om \times \baire \to \om$ of Lemma \ref{lemma transition from closed code to code of convex hull}. For all $(m,\alpha,x) \in \ca{A} \times \R^N$ we have
\begin{align*}
f_d(m,\alpha,x) = d(x,H(\fcode^{\R^N}(\alpha) \cap [-m,m]^N)) = d(x, \fcode^{\R^N}(v(m,\alpha))) = g(v(m,\alpha),x).
\end{align*}
It follows that the function $f_d$ is $\bolds^0_4$-measurable as the composition of a $\bolds^0_3$-measurable with a $\bolds^0_2$ one. Finally we show that $f_d$ is computed by a set in the class $\bolds^0_4 \cap \del$.

For all $(m,\alpha,x) \in \ca{A}\times \R^N$ we have
\begin{align*}
f_d(m,\alpha,x)  \in N(\R,s)
\iff& \ g(v(m,\alpha),x)  \in N(\R,s)\\
\iff& \ (\exists \beta)[\beta = v(m,\alpha) \ \& \ g(\beta,x)  \in N(\R,s)]\\
\iff& \ (\exists \beta)[\beta = v(m,\alpha) \ \&  R^g_{\Sigma}(\beta,x)  \in N(\R,s)]\\
\iff& \ (\forall \beta)[\beta = v(m,\alpha) \ \vee \ R^g_{\Pi}(\beta,x)  \in N(\R,s)].
\end{align*}
Hence the set
\begin{align*}
R^{f_d} 
=& \ (\ca{A} \times \R^N \times \om) \cap \set{(m,\alpha,x,s)}{(\exists \beta)[\beta = v(m,\alpha) \ \& \ R^g_{\Sigma}(\beta,x)]}\\
=& \ (\ca{A} \times \R^N \times \om) \cap \set{(m,\alpha,x,s)}{(\forall \beta)[\beta = v(m,\alpha) \ \vee \ R^g_{\Pi}(\beta,x)]}\\
=& \ (\ca{A} \times \R^N \times \om) \cap \set{(m,\alpha,x,s)}{f_d(m,\alpha,x)  \in N(\R,s)}
\end{align*}
is $\del$ and computes $f_d$. Moreover $R^{f_d}$ is $\bolds^0_4$ since $\ca{A}$ is closed and $f_d$ is $\bolds^0_4$-measurable. 
\end{proof}\bigskip

Now we are ready for the \emph{proof to Proposition \ref{proposition basis of coding}}. First we remark the following.

\emph{Claim.} Given a separable metric space $(X,d)$, a dense sequence $(x_i)_{i \in \om}$ in $X$ and a non-empty $A \subseteq X$, then for all $x \in X$ and all $p > 0$ we have
\begin{align*}
d(x,A) < p 
\iff& \ (\exists s,i)[d(x_i,A) < p-\pq{s} \ \& \ d(x,x_i) < \pq{s}]\\
\iff& \ (\exists s,i)[d(x_i,A) < p-\pq{s} \ \& \ d(x,x_i) \leq \pq{s}].
\end{align*}
Hence
\[
\set{x \in X}{d(x,A) < p} = \cup_{(s,i) \in I(p,A)}\; B_X(x_i,\pq{s}) = \cup_{(s,i) \in I(p,A)}\; \overline{B_X(x_i,\pq{s})},
\]
where $I(p,A)$ is the set of all $(i,s) \in \om^2$ for which $d(x_i,A) < p - \pq{s}$.
 
This preceding implications can be proved round-robin style easily. We just note that for the first left-to-right-hand implication one chooses some positive rational number $\pq{s}$ with $2 \cdot \pq{s} < p - d(x,A)$ and uses the density of $(x_i)_{i \in \om}$.

Suppose now that $N \geq 1$. Let $\ep \in \del$, $\ca{A}$ and $f_d$ be as in Lemma \ref{lemma transition from closed code to distance from convex hull}. We define $I \subseteq \om^4 \times \baire$ by
\begin{align*}
I(i,s,n,m,\alpha) 
\iff& \ \fcode^\ca{\R^N}(\alpha) \cap [-m,m]^N \neq \emptyset \ \& \  d(\dense{\R^N}{i},H(\fcode^{\R^N}(\alpha) \cap [-m,m]^N)) < 2^{-n} - \pq{s}\\
\iff& \ (m,\alpha) \in \ca{A} \ \& \ f_d(m,\alpha,\dense{\R^N}{i}) < 2^{-n} - \pq{s}
\end{align*}
so that from the preceding claim
\begin{align*}
& \ \set{x \in \R^N}{d\left(x,H\left(\fcode^{\R^N}(\alpha) \cap [-m,m^N]\right)\right) < 2^{-n}}=\\
& \hspace*{75mm} = \ \cup_{(i,s) \in I(n,m,\alpha)} B_{\R^N}(\dense{\R^N}{i},\pq{s}) \\
& \hspace*{75mm} = \ \cup_{(i,s) \in I(n,m,\alpha)} N(\R^N, \langle i,s \rangle).
\end{align*}
Therefore we define $f: \om^2 \times \baire \to \baire$
\[
f(n,m,\alpha)(j)
=
\begin{cases}
\langle i,s \rangle, & \ \text{if} \ j = \langle i,s \rangle \ \text{and} \ I(i,s,n,m,\alpha),\\
0, & \ \text{else},
\end{cases}
\]
and we have that
\begin{align*}
\set{x \in \R^N}{d\left(x,H\left(\fcode^{\R^N}(\alpha) \cap [-m,m^N]\right)\right) < 2^{-n}} 
=& \ \cup_{j \in \om} N(\R^N, f(n,m,\alpha)(j))\\
=& \ \cup_{j \in \om} \overline{N(\R^N, f(n,m,\alpha)(j))}.
\end{align*}
In particular $f(n,m,\alpha)$ is an open code of the former set. From Lemma \ref{lemma transition from closed code to distance from convex hull} and the fact that $(i \in \om \to \dense{\R^N}{i})$ is recursive it is immediate that $I$ is a $\Sigma^0_4(\ep)$ set; hence $f$ is $\Sigma^0_5(\ep)$-recursive. This settles assertion (a) of the proposition.

Regarding (b) we recall the proof that a convex open set $U \subseteq \R^N$ is convexly generated: we consider a sequence $(B_i)_{i \in \om}$ of (bounded) open balls with $U = \cup_i B_i = \cup_i \overline{B_i}$ and we have that $U = \cup_i H(\cup_{j \leq i} \overline{B_j})$.

Since the set $\set{x \in \R^N}{d\left(x,H\left(\fcode^{\R^N}(\alpha) \cap [-m,m^N]\right)\right) < 2^{-n}}$ is convex, from the preceding arguments we have
\[
\set{x \in \R^N}{d\left(x,H\left(\fcode^{\R^N}(\alpha) \cap [-m,m^N]\right)\right) < 2^{-n}} = \cup_i H\left(\cup_{j \leq i} \overline{N(\R^N, f(n,m,\alpha)(j))}\right).
\]
Clearly the latter union is increasing. Thus we need to find a function $g$ of the required complexity such that $\rfn{g(n,m,\alpha)^\ast}(i)$ is a closed code for the set $H\left(\cup_{j \leq i} \overline{N(\R^N, f(n,m,\alpha)(j))}\right)$ for all $i$.

To do this we apply Lemma \ref{lemma transition from closed code to code of convex hull}. It is easy to find a recursive function $h_0: \om^3 \times \baire \to \om$ such that $\normR{x}_{\infty} \leq h_0(n,m,i,\alpha)$ for all $x \in \cup_{j \leq i} \overline{N(\R^N, f(n,m,\alpha)(j))}$ and all $n,m,i,\alpha$.

We also need to find a recursive function $h_1$ such that the set $\cup_{j \leq i} \overline{N(\R^N, f(n,m,\alpha)(j))}$ equals to $\fcode^{\R^N}(h_1(n,m,i,\alpha))$ for all $n,m,i,\alpha$. To do this we notice first that for an open ball $B(x,p)$ in $\R^N$ it holds
\begin{align*}
y \not \in \overline{B(x,p)}
\iff& \ p < d(x,y)\\
\iff& \ (\exists m, k)[p + 2^{-k} < d(x,\dense{\R^N}{m}) \ \& \ y \in B(\dense{\R^N}{m},2^{-k})],
\end{align*}
so
\begin{align}\nonumber
y \not \in \cup_{j \leq i} \overline{B(x_j,p_j)}
\iff& \ (\forall j \leq i)(\exists m, k)[p_j + 2^{-k} < d(x_j,\dense{\R^N}{m}) \ \& \ y \in B(\dense{\R^N}{m},2^{-k})]\\ \nonumber
\iff& \ (\exists s)[(s)_0, (s)_1 \in \Seq \ \& \ \lh((s)_0) = \lh((s)_1) = i+1\\
&\hspace*{-20mm} \& \ (\forall j < \lh((s)_0))[p_j + 2^{-((s)_1)_j} < d(x_j,\dense{\R^N}{((s)_0)_j}) \ \& \ y \in B(\dense{\R^N}{((s)_1)_j},2^{-((s)_1)_j})]].\label{equation finite union closure}
\end{align}
We will replace $x_j$ with the center of $N(\R^N,f(n,m,\alpha)(j))$ and $p_j$ with the radius of the latter ball.

The last of the preceding quantifications is clearly the finite intersection of open balls. We fix a recursive function $cp$ such that
\[
\cap_{j< {s'}}N(\R^N,(u)_{j}) = \cup_t N(\R^N,cp(u,s',t)), \quad u \in \Seq,
\]
see \cite[3B.2]{yiannis_dst}. 

We also fix $n,m,i,\alpha$ and $s$ with $(s)_0, (s)_1 \in \Seq, \lh((s)_0) = \lh((s)_1) = i+1$ for the discussion, and moreover a recursive function $h_1'$ such that
\[
N(\R^N,h_1'(s,j)) = B(\dense{\R^N}{((s)_1)_j},2^{-((s)_1)_j})
\]
for all $j$. It is then clear that the finite intersection
\begin{align*}
& \ \cap_{j < \lh((s)_0)} \set{B(\dense{\R^N}{((s)_1)_j},2^{-((s)_1)_j})}{\pq{(f(n,m,\alpha)(j))_1}+ 2^{-((s)_1)_j}\\
& \hspace*{55mm} < d(\dense{\R^N}{(f(n,m,\alpha)(j))_0},\dense{\R^N}{((s)_0)_j})}
\end{align*}
equals to
\[
\cap_{j < \lh((s)_0)} N(\R^N,(h''_1(n,m,s,i,\alpha))_j)
\]
where $h_1''$ is a recursive function such that
\[
(h_1''(n,m,s,i,\alpha))_j =
\begin{cases}
h_1'(s,j), & \text{if} \ \pq{{(f(n,m,\alpha)(j))_1}}+ 2^{-((s)_1)_j} \\
          & \hspace*{15mm} < d(\dense{\R^N}{(f(n,m,\alpha)(j))_0},\dense{\R^N}{((s)_0)_j})\\
& \text{and} \ j < \lh((s)_0)\\
0, & \text{else}.
\end{cases} 
\]
Using the key property of the function $cp$ the preceding finite intersection equals to
\[
\cup_t N(\R^N,cp(h_1''(n,m,s,i,\alpha),\lh((s)_0),t)).
\]
So we define
\begin{align*}
h_1(n,m,i,\alpha)(\langle s, t\rangle) = cp(h_1''(n,m,s,i,\alpha),\lh((s)_0),t)
\end{align*}
if $(s)_0, (s)_1 \in \Seq, \lh((s)_0) = \lh((s)_1) = i+1$, and we let $h_1(n,m,i,\alpha)$ be $0$ in any other case. Using (\ref{equation finite union closure}) it becomes clear that
\[
\R^N \setminus \left(\cup_{j \leq i} \overline{N(\R^N, f(n,m,\alpha)(j))}\right)
= \cup_{e} N(\R^N,h_1(n,m,i,\alpha)(e)).
\]
Now, given $h_0$, $h_1$ as above we have
\begin{align*}
\cup_{j \leq i} \overline{N(\R^N, f(n,m,\alpha)(j))} 
=& \ \fcode^{\R^N}(h_1(n,m,i,\alpha))\\
=& \ \fcode^{\R^N}(h_1(n,m,i,\alpha)) \cap [-h_0(n,m,i,\alpha), h_0(n,m,i,\alpha)]^N. 
\end{align*}
Hence
\begin{align*}
H(\cup_{j \leq i} \overline{N(\R^N, f(n,m,\alpha)(j))}) 
=& \ H \left(\fcode^{\R^N}(h_1(n,m,i,\alpha)) \cap [-h_0(n,m,i,\alpha), h_0(n,m,i,\alpha)]^N \right)\\
=& \ \fcode^{\R^N}(v( h_0(n,m,i,\alpha),  h_1(n,m,i,\alpha)))\\
=& \ \fcode^{\R^N}(h(n,m,i,\alpha)),
\end{align*}
where $v$ is as in Lemma \ref{lemma transition from closed code to code of convex hull} and  $h(\alpha,n,m,i,k) = v( h_0(n,m,i,\alpha),  h_1(n,m,i,\alpha))(k)$. Clearly $h$ is $\Sigma^0_5(\ep)$-recursive.

Thus we need find a $\Sigma^0_5(\ep)$-recursive function $g$ such that $\rfn{g(n,m,\alpha)^\ast}(i) = h(n,m,i,\alpha)$ and $g(n,m,\alpha)(0) = 1$. (Notice that from the preceding equalities the union $\cup_{i \in \om}\fcode^{\R^N}(\rfn{g(n,m,\alpha)^\ast}(i))$ is increasing.) Such a function $g$ is obtained by standard recursive-theoretic methods:

Recall that $\rfn{\beta}^{\om, \baire}(i) \downarrow$ exactly when there is a unique $y \in \baire$ such that for all $s \in \om$, we have that $y \in N(\baire,s) \iff \scode_{1}(\beta,y,i,s)$. In the latter case the value of $\rfn{\beta}(i)$ is the unique $y$ as before.  

Hence we need to have
\begin{align}\label{equation B}
h(n,m,i,\alpha) \in N(\baire,s) \iff \scode_1(g(n,m,i,\alpha)^\ast,n,m,i,s)
\end{align}
for all $n,m,i,\alpha,s$. 

Clearly the relation $R^h(n,m,i,\alpha,s) \iff h(n,m,i,\alpha) \in N(\baire,s)$ is $\Delta^0_5(\ep)$. We recall that every $\Delta^0_5(\gamma)$ subset of the naturals is recursive on the fourth Turing jump $\gamma^{(4)}$ of $\gamma$, \emph{uniformly} on $\gamma$.

Thus there are a recursive points $\ep_0, \ep_1 \in \baire$ and a recursive function $S$ such that
\begin{align}\nonumber
h(n,m,i,\alpha) \in N(\baire,s)
\iff& \ R^h(n,m,i,\alpha,s) \\ \nonumber
\iff& \ \rfn{\ep_0}^{\baire \times \om^4,\om}((\ep \oplus \alpha)^{(4)},n,m,i,s) = 1\\ \nonumber
\iff& \ \scode^{\baire \times \om^4}_1(\ep_1,(\ep \oplus \alpha)^{(4)},n,m,i,s) \\
\iff& \ \scode^{\om^4}_1(S(\ep_1,(\ep \oplus \alpha)^{(4)}),n,m,i,s) \label{equation C}
\end{align}
for all $n,m,i,\alpha,s$. We thus take
\[
g(n,m,\alpha)(0) = 1 \ \ \text{and} \ \ g(n,m,\alpha)(t+1) = S(\ep_1,(\ep \oplus \alpha)^{(4)})(t).
\]
Clearly $g$ is $\Sigma^0_5(\ep)$-recursive. From (\ref{equation C}) it follows that
\[
h(n,m,i,\alpha) \in N(\baire,s) \iff \scode_1(S(\ep_1,(\ep \oplus \alpha)^{(4)}),n,m,i,s) \iff \scode_1(g(n,m,\alpha)^\ast,n,m,i,s),
\]
\ie we have established the key property (\ref{equation B}).\smallskip

This concludes the proof of Proposition \ref{proposition basis of coding}.

\subsection{Convexly generated codes and the proof to the uniform Preiss Separation Theorem}

\label{subsection convexly generated codes and the proof to the uniform Preiss Separation Theorem}

We fix a natural number $N \geq 1$. The \emph{hierarchy} $(\cgclass^N_\xi)_{\xi < \om_1}$ \emph{of the family $\cgclass^N$ of all convexly generated subsets of $\R^N$} is given by
\begin{align*}
\cgclass^N_0 =& \ \set{K \subseteq \R^N}{K \ \text{is compact and convex}}\\
\cgclass^N_\xi =& \ \set{\cup_{i \in \om}\cap_{j \in \om} A_{ij}}{\text{for all $i,j$ there is $\xi_{ij} < \xi$ such that $A_{ij} \in \cgclass^N_{\xi_{ij}}$}\\
& \hspace*{30mm} \text{and for all $i\leq i'$ we have $\cap_{j \in \om}A_{ij} \subseteq \cap_{j \in \om}A_{i'j}$}},
\end{align*}
where $\xi \geq 1$. 

By induction one can verify that $\cgclass^N_\eta \subseteq\cgclass^N_\xi$ for all $\eta < \xi$ and that
\begin{align*}
\cgclass^N 
=& \ \text{the family of all convexly generated subsets of $\R^N$}\\
=& \ \cup_{\xi < \om_1} \cgclass^N_\xi.
\end{align*}

The family $\cgcode^N$ of \emph{convexly generated codes} is defined simultaneously with the coding function $\cgcf^N: \cgcode^N \surj \cgclass^N$ by recursion\footnote{We need to define them simultaneously in order to be able to express the increasing union of already encoded sets. Notice that unlike the Borel or semi-positive codes, the family of convexly generated codes depends on the space that we are considering.}
\begin{align*}
\cgcode^N_0 =& \ \set{\alpha}{\alpha(0) = 0  \ \& \ \fcode^{\R^N}(\alpha^\ast) \ \text{is compact and convex}}\;,\\
\cgcfun{0}^N:& \ \cgcode^N_0 \surj \cgclass^N_0: \alpha \mapsto \fcode^{\R^N}(\alpha^\ast)\;,\\
\cgcode^N_\xi =& \ \set{\alpha}{\alpha(0) = 1 \ \& \ (\forall i,j)(\exists \eta < \xi)[\rfn{\alpha^\ast}(\langle i,j \rangle) \in \cgcode^N_\eta\\
& \ \& \ \text{for all $i \leq i'$ we have} \ \cap_j \cgcfun{\eta}(\rfn{\alpha^\ast}(\langle i, j)) \subseteq \cap_j \cgcfun{\eta}(\rfn{\alpha^\ast}(\langle i', j))]}\;,\\
\cgcfun{\xi}^N:& \ \cgcode^N_\xi \surj \cgclass^N_\xi: \alpha \mapsto \cup_{i \in \om}\cap_{j \in \om} \cgcfun{\eta(\alpha,i,j)}(\rfn{\alpha^\ast}(\langle i,j \rangle))\\
& \ \text{where $\eta(\alpha,i,j) =$ the least $\eta$ such that} \ \rfn{\alpha^\ast}(\langle i,j \rangle) \in \cgcode^N_\eta\;,\\
\cgcode^N =& \ \cup_{\xi < \om_1} \cgcode^N_\xi\;,\\
\cgcf^N=& \ \cup_{\xi < \om} \cgcfun{\xi}^N.
\end{align*}
It can be proved by induction that for all $1 \leq \eta < \xi$ the restriction of $\cgcfun{\xi}^N$ on $\cgclass^N_\eta$ coincides with $\cgcfun{\eta}^N$, and thus $\cgcf^N$ is well-defined. 

Given $\alpha \in \cgcode^N$ we put
\[
\normcg{\alpha}^N = \text{the least $\xi < \om_1$ such that} \ \alpha \in \cgcode^N_\xi,
\]
and then it is clear that $\cgcf^N(\alpha) = \cgcfun{\normcg{\alpha}^N}^N(\alpha)$ for all $\alpha \in \cgcode^N$. We say that an $\alpha \in \cgcode^N$ is a \emph{code for the convexly generated} set $A \subseteq \R^N$ if $A = \cgcf^N(\alpha)$.\bigskip 

Now we can \emph{prove Theorem \ref{theorem uniform preiss separation}.}\bigskip

This is done by combining our constructive proof to Theorem \ref{theorem preiss separation} with the method of the Suslin-Kleene Theorem \cite[7B.3, 7B.4]{yiannis_dst} by replacing the term ``recursive" with ``$\del$-recursive". We only need to make sure that the codes of the separating sets $D^\sigma_{(k,l,j)}$ and the clauses defining the tree $J$, are obtained as \del-functions of the codes $\alpha,\beta$.

The analysis following the constructive proof to Theorem \ref{theorem preiss separation} deals exactly with these points. We consider the following:
\begin{list}{$\bullet$}{}
\item The function $f_d$ of Lemma \ref{lemma transition from closed code to distance from convex hull}.

\item The recursive function $\strans^{\R^N} \equiv \strans$ as in the statement of Proposition \ref{proposition from analytic code to good Souslin code}, so that $\strans(\alpha)$ is a good Souslin code for $\scode^{\R^N}_1(\alpha)$.
\item The functions $f$, $g$ as in the statement of Proposition {proposition basis of coding}.
\end{list}

Below we follow the notation of our proof to Theorem \ref{theorem preiss separation}.

The first clause in the definition of the tree $J$,
\[
C_1(\alpha,m,s) \iff Q_{\dec{s}}(\alpha) \cap [-m,m]^N \neq \emptyset,
\] 
where $(Q_{\dec{s}}(\alpha))_{s \in \Seq}$ is the good Souslin scheme encoded by $\strans(\alpha)$, is a $\del$ relation. This is because of the compactness of closed bounded subsets of $\R^N$,
\begin{align*}
C_1(\alpha,m,s)
\iff& \ (\exists x)[\fcode^{\R^N}((\strans(\alpha))_s,x) \ \& \ x \in [-m,m]^N]\\
\iff& \ (\exists x \in \del(\alpha))[\fcode^{\R^N}((\strans(\alpha))_s,x) \ \& \ x \in [-m,m]^N].
\end{align*}
For the second clause, let $\tilde{S}(\beta)$ be the tree of pairs, which corresponds to the analytic set $\pi^{-1}[\scode^{\R^N}_1(\beta)]$. Clearly we can obtain a code for $\tilde{S}(\beta)$ recursively in $\beta$. Hence the second clause,
\[
C_2(\beta,s,t) \iff \lh(\dec{s}) = \lh(\dec{t}) \ \& \ (\dec{s},\dec{t}) \in \tilde{S}(\beta),
\]
is actually a $\Delta^0_1$ relation.

Regarding the third clause we consider the condition
\[
d\left(h(b),H\left(Q_{u}(\alpha) \cap [-m,m]^N\right)\right) < 2^{-\lh(b)+3}
\]
where $d$ is the distance with respect to the supremum norm $Q_u(\alpha)$ is as above and $b$, $u$ are finite sequences of the same length. So we define
\begin{align*}
C_3(\alpha,\beta,\dec{s},\dec{t}) 
\iff& \ \lh(s) = \lh(t) \ \& \ d(h(\dec{s}), H(\fcode^{\R^N}((\strans(\alpha))_{t}) \cap [-m,m]^N) < 2^{-
\lh(\dec{s})+3}\\
\iff& \ \lh(s) = \lh(t) \ \& \ f_d(m,(\strans(\alpha))_{t},h(\dec{s})) < 2^{-
\lh(\dec{s})+3}.
\end{align*}
Since $f_d$ is $\del$-recursive it follows that $C_3$ is \del.

It remains to obtain codes for the sets $D^\sigma_{(k,l,j)}$ in terms of $\alpha$ and $\beta$. The latter sets are either the empty set or the whole space, or according to (\ref{equation definition of Dsigma}),
\begin{align*}
D^\sigma_{(k,l,j)}(\alpha,\beta) 
=& \ \set{x \in \R^N}{d\left(x,H(Q_{\cn{u}{(j)}}(\alpha)) \cap [-m,m]^N\right) < 2^{-\lh(u)}},
\end{align*}
where $u$ is obtained from $\sigma$. (The case distinction is done in a \del-way.) 

It is clear that we can find recursive points $\gamma_0$, $\gamma_1$, $\delta_0$, $\delta_1$ such that $\gamma_i$ and $\delta_i$ are Borel and convexly generated codes for $A_i$ respectively, where $A_0 = \emptyset$ and  $A_1 = \R^N$.

Regarding the third case we use the functions $f$, $g$ from Proposition \ref{proposition basis of coding}. Let $v: \om \times \om \to \om$ be recursive such that whenever $s \in \om$ encodes a tuple $\sigma = (m,b,d,u)$ then $v(s,j)$ is a code for $\cn{u}{(j)}$, \ie $\dec{v(s,j)} = \cn{u}{(j)}$. Then for such $s$, $u$, $\sigma$ we have
\[
D^\sigma_{(k,l,j)}(\alpha,\beta)  = \set{x \in \R^N}{d\left(x,H(\fcode^{\R^N}((\strans(\alpha))_{v(s,j)})) \cap [-m,m]^N\right) < 2^{-\lh(\dec{v(s,j)})+1}}.
\]
Therefore $f(\lh(\dec{v(s,j)})-1,m,(\strans(\alpha))_{v(s,j)})$ is a Borel code for $D^\sigma_{(k,l,j)}(\alpha,\beta)$. To find a convexly generated code we need to make a minor change to the function $g$. From Proposition \ref{proposition basis of coding} we have that
\[
D^\sigma_{(k,l,j)}(\alpha,\beta) = \cup_{i \in \om} \fcode^{\R^N}(\rfn{g(\lh(\dec{v(s,j)})-1,m,(\strans(\alpha))_{v(s,j)})^\ast}(i)),
\]
and that the preceding union is increasing. We need to bring the latter union in the form $\cup_i \cap_k A_{ik}$. Clearly there is a \del-recursive function $\tilde{g}$ such that $\tilde{g}(n,m,\gamma)(0) = 1$ and $\rfn{\tilde{g}(n,m,\gamma)^\ast}(\langle i, k \rangle) = \rfn{g(n,m,\gamma)^\ast}(i)$ for all $i,k$. Hence
\begin{align*}
D^\sigma_{(k,l,j)}(\alpha,\beta) =& \ \cup_{i \in \om} \cap_k \fcode^{\R^N}(\rfn{\tilde{g}(\lh(\dec{v(s,j)})-1,m,(\strans(\alpha))_{v(s,j)})^\ast}(\langle i, k \rangle))\\
= & \ \cgcf(\tilde{g}(\lh(\dec{v(s,j)})-1,m,(\strans(\alpha))_{v(s,j)})
\end{align*}
and so $\tilde{g}(\lh(\dec{v(s,j)})-1,m,(\strans(\alpha))_{v(s,j)})$ is a convexly generated code for $D^\sigma_{(k,l,j)}(\alpha,\beta)$.

Thus we can find partial $\del$-recursive functions $d_i(\ep,\alpha,\beta,s,k,l,j)$, $i=1,2$, such that whenever $\scode_1^{\R^N}(\alpha)$ is convexly generated and disjoint from $\scode_1^{\R^N}(\beta)$, and $s \in \om$ encodes a tuple $\sigma = (m,b,d,u)$ then $d_1(\ep,\alpha,\beta,s,k,l,j)$ and $d_2(\ep,\alpha,\beta,s,k,l,j)$ are defined and are Borel and convexly generated codes for $D^\sigma_{(k,l,j)}(\alpha,\beta)$ respectively.

The proof proceeds exactly as the one of \cite[7B.3]{yiannis_dst} by considering $\del$-recursive functions instead of recursive ones.

\subsection{\HYP-convexly-generated is almost \del \ and convexly generated.}

Now we can effectivize the notion of a convexly generated set as we did before with the semi-positive sets (see Definition \ref{definition effective semipositive}).

\begin{definition}\normalfont
\label{definition effective convexly generated}
Let $\ep \in \baire$ and $N \geq 1$. The family $\speffclass^N_\xi(\ep)$ of all \emph{$\HYP(\ep)$-convexly generated} subsets of $\R^N$ of \emph{order $\xi$} is defined by
\[
\cgeffclass^N_\xi(\ep) = \set{\cgcf^N(\alpha)}{\alpha \ \text{is $\ep$-recursive and} \ \normcg{\alpha}^N = \xi}.
\]
The family of all \emph{$\HYP(\ep)$-convexly generated subsets} of $\R^N$ is
\[
\cup_{\xi} \ \cgeffclass^N_\xi(\ep).
\]

It is clear that a set is convexly generated exactly when it is $\HYP(\ep)$-convexly generated for some $\ep \in \baire$. 
\end{definition}\smallskip

Finally we \emph{prove Corollary \ref{corollary effective Preiss}.}\bigskip

The equivalence between $(i)$ and $(iii)$ is clear from the Preiss Separation Theorem.

$(iii) \Longrightarrow (ii)$ We consider the \del-recursive function $v$ in the statement of Theorem \ref{theorem uniform preiss separation}. If $A$ is $\del$ then there are recursive points $\alpha, \beta \in \baire$, which are analytic codes for $A$ and $\R^N \setminus A$ respectively. If additionally $A$ is convex then from Theorem \ref{theorem uniform preiss separation} the $\del$-point $v(\alpha,\beta)$ is a convexly generated code for $A$.

$(ii) \Longrightarrow (iii)$ Clearly a convexly generated set is convex, so we need to show that every convexly generated set with a $\del$ (convexly generated) code is \del. This is proved exactly as in \cite[7B.5]{yiannis_dst} by actually showing the stronger uniform result, that there is a recursive function $u = (u_1,u_2): \baire \to \baire \times \baire$ such that for all $\alpha \in \cgcode$ the points $u_1(\alpha)$, $u_2(\alpha)$ are analytic codes for $\cgcf(\alpha)$ and $\R^N \setminus \cgcf(\alpha)$ respectively. 

Note that in the basis step of the definition of $u$ we need a recursive function $g_0 = (g_{01},g_{02}): \baire \to \baire \times \baire$ such that $g_{01}(\alpha)$ and $g_{02}(\alpha)$ are analytic codes for the sets $\fcode^{\R^N}(\alpha)$ and $\R^N \setminus \fcode^{\R^N}(\alpha) = \ocode^{\R^N}(\alpha)$ respectively. The definition of $g_{01}$ is easy to arrange. Regarding $g_{02}$, since $\ocode^{\R^N}(\alpha) = \cup_{\n} N(\R^N,\alpha(n))$, we need a function $h: \om \to \baire$ such that $h(s)$ is an analytic code for $N(\R^N,s)$, and a function $\vee_\om: \baire \to \baire$ such that $\vee_{\om}((\alpha)_n)$ is an analytic code for $\cup_{\n} \scode^{\R^N}_1((\alpha)_n)$. Both these functions are easy to construct. We omit the details.

\section*{Acknowledgments}
                 
The author is currently a Fellow of the \textit{Programme 2020 researchers : Train to Move}\includegraphics[scale=0.01]{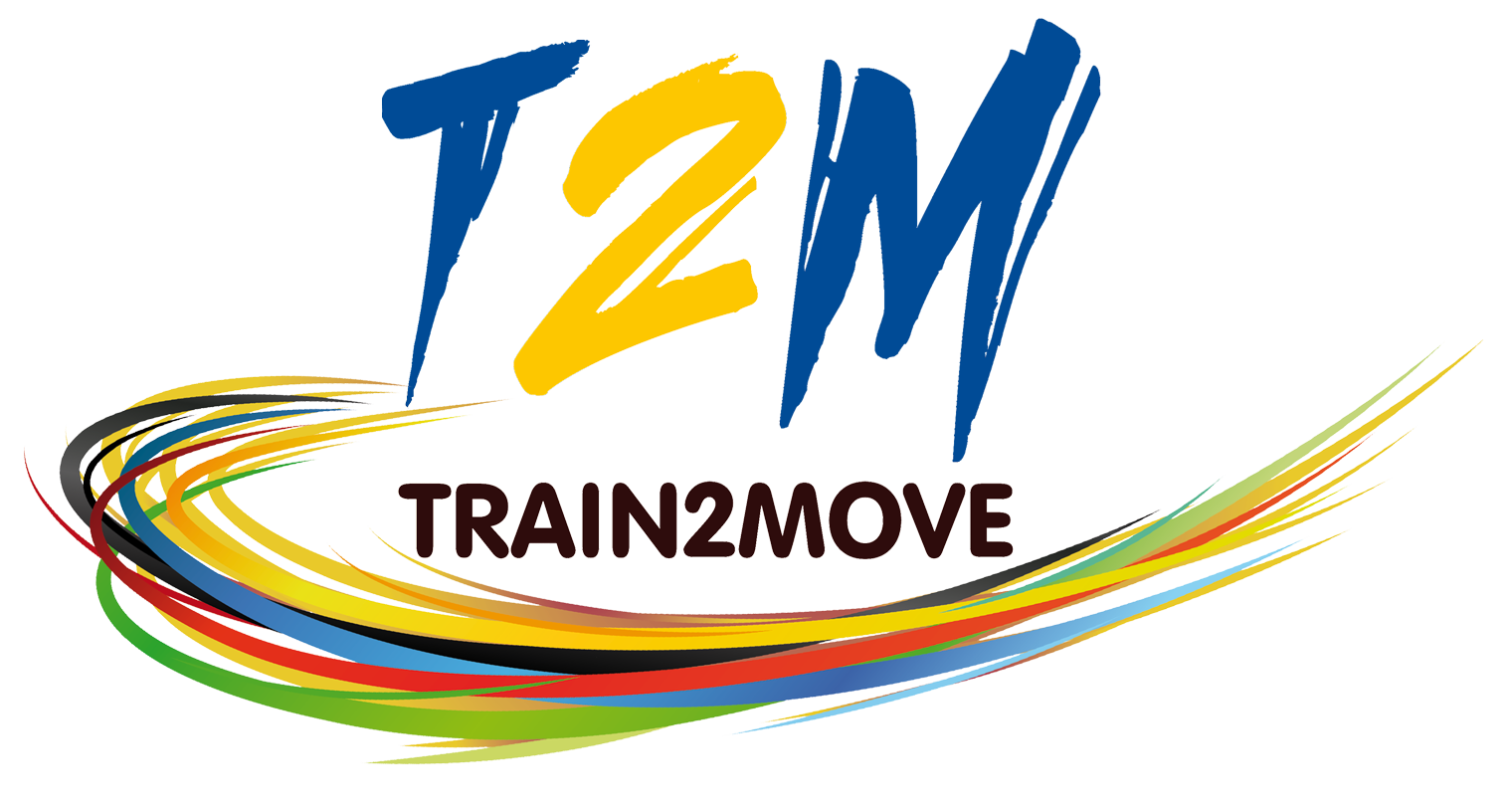} 
at the Mathematics Department ``Guiseppe Peano" of the University of Turin, Italy. The part of the article, which concerns the Dyck separation was written, while the author was a Scientific Associate at TU Darmstadt, Germany.

Thanks are owed to {\sc A. Kechris}, {\sc D. Lecomte }, and {\sc B. Miller} for helpful discussions.


\begin{thebibliography}{1}

\bibitem{holicky_the_convex_generation_of_convex_Borel_sets_in_locally_convex_spaces}
Petr Holick{\'y}.
\newblock The convex generation of convex {B}orel sets in locally convex
  spaces.
\newblock {\em Mathematika}, 21:207--215, 1974.

\bibitem{kechris_classical_dst}
Alexander~S. Kechris.
\newblock {\em Classical Descriptive Set Theory}, volume 156 of {\em Graduate
  Texts in Mathematics}.
\newblock Springer-Verlag, 1995.

\bibitem{klee_convex_sets_in_linear_spaces}
V.~L. Klee, Jr.
\newblock Convex sets in linear spaces.
\newblock {\em Duke Math. J.}, 18:443--466, 1951.

\bibitem{klee_convex_sets_in_linear_spacesIII}
V.~L. Klee, Jr.
\newblock Convex sets in linear spaces. {III}.
\newblock {\em Duke Math. J.}, 20:105--111, 1953.

\bibitem{larman_the_convex_sets_in_R3_are_convexly_generated}
D.~G. Larman.
\newblock The convex borel sets in {$R\sp{3}$} are convexly generated.
\newblock {\em J. London Math. Soc. (2)}, 4:5--14, 1971.

\bibitem{louveau_a_separation_theorem_for_sigma_sets}
Alain Louveau.
\newblock A separation theorem for ${\Sigma}^1_1$ sets.
\newblock {\em Trans. Amer. Math. Soc.}, 260(2):363--378, 1980.

\bibitem{yiannis_dst}
Y.N. Moschovakis.
\newblock {\em Descriptive set theory, Second edition}, volume 155 of {\em
  Mathematical Surveys and Monographs.}
\newblock American Mathematical Society, 2009.

\bibitem{preiss_the_convex_generation_of_Borel_sets}
D.~Preiss.
\newblock The convex generation of convex {B}orel sets in {B}anach spaces.
\newblock {\em Mathematika}, 20:1--3, 1973.

\bibitem{spector_recursive_well-orderings}
C.~Spector.
\newblock Recursive well-orderings.
\newblock {\em J. Symbolic Logic}, 20:151--163, 1955.

\end{thebibliography}
\end{document}